\newtheorem{thm}{Theorem}[section]
\newtheorem{lem}{Lemma}[section]
\newtheorem{prob}{Problem}[section]
\newtheorem{algo}{Algorithm}[section]
\newcommand{\argmin}{\operatornamewithlimits{argmin}}
\numberwithin{equation}{section}
\begin{document}
\makeatletter

\begin{center}
\large{\bf Line Search Fixed Point Algorithms Based on Nonlinear Conjugate Gradient Directions:
Application to Constrained Smooth Convex Optimization}\\
\small{This work was supported by the Japan Society for the Promotion of Science through a Grant-in-Aid for
Scientific Research (C) (15K04763).}
\end{center}\vspace{3mm}

\begin{center}
\textsc{Hideaki Iiduka}\\
Department of Computer Science, 
Meiji University,
1-1-1 Higashimita, Tama-ku, Kawasaki-shi, Kanagawa 214-8571 Japan. 
(iiduka@cs.meiji.ac.jp)
\end{center}

\vspace{2mm}

\footnotesize{
\noindent\begin{minipage}{14cm}
{\bf Abstract:}
This paper considers the fixed point problem for a nonexpansive mapping on a real Hilbert space and proposes novel line search fixed point algorithms to accelerate the search. The termination conditions for the line search are based on the well-known Wolfe conditions that are used to ensure the convergence and stability of unconstrained optimization algorithms. The directions to search for fixed points are generated by using the ideas of the steepest descent direction and conventional nonlinear conjugate gradient directions for unconstrained optimization. We perform convergence as well as convergence rate analyses on the algorithms for solving the fixed point problem under certain assumptions. The main contribution of this paper is to make a concrete response to an issue of constrained smooth convex optimization; that is, whether or not we can devise nonlinear conjugate gradient algorithms to solve constrained smooth convex optimization problems. We show that 
the proposed fixed point algorithms include ones with nonlinear conjugate gradient directions
which can solve constrained smooth convex optimization problems.
To illustrate the practicality of the algorithms, we apply them to concrete constrained smooth convex optimization problems, such as constrained quadratic programming problems and generalized convex feasibility problems, and numerically compare them with previous algorithms based on the Krasnosel'ski\u\i-Mann fixed point algorithm. The results show that the proposed algorithms dramatically reduce the running time and iterations needed to find optimal solutions to the concrete optimization problems compared with the previous algorithms.
\end{minipage}
 \\[5mm]

\noindent{\bf Keywords:} {constrained smooth convex optimization, fixed point problem, 
generalized convex feasibility problem, Krasnosel'ski\u\i-Mann fixed point algorithm, line search method, nonexpansive mapping,
nonlinear conjugate gradient methods 
}\\
\noindent{\bf Mathematics Subject Classification:} {47H10, 65K05, 90C25}

\hbox to14cm{\hrulefill}\par


\section{Introduction}\label{sec:1}
Consider the following {\em fixed point problem} (see \cite[Chapter 4]{b-c}, \cite[Chapter 3]{goebel1}, \cite[Chapter 1]{goebel2}, \cite[Chapter 3]{takahashi}):
\begin{align}\label{prob:1}
\text{Find } x^\star \in \mathrm{Fix}\left(T\right) := 
\left\{x^\star \in H \colon  T\left(x^\star\right) = x^\star \right\},
\end{align}
where $H$ stands for a real Hilbert space with inner product $\langle \cdot,\cdot\rangle$ and its induced norm $\| \cdot \|$,
$T$ is a {\em nonexpansive} mapping from $H$ into itself
(i.e., $\| T(x) - T(y) \| \leq \|x-y\|$ $(x,y\in H)$), and one assumes $\mathrm{Fix}(T) \neq \emptyset$.
Problem \eqref{prob:1} includes convex feasibility problems \cite{bau}, \cite[Example 5.21]{b-c}, 
constrained smooth convex optimization problems \cite[Proposition 4.2]{yamada},
problems of finding the zeros of monotone operators \cite[Proposition 23.38]{b-c}, and monotone variational inequalities \cite[Subchapter 25.5]{b-c}.

There are useful algorithms for solving Problem \eqref{prob:1}, such as the {\em Krasnosel'ski\u\i-Mann algorithm} \cite[Subchapter 5.2]{b-c}, \cite[Subchapter 1.2]{berinde}, \cite{cominetti2014,kra,mann}, {\em Halpern algorithm} \cite[Subchapter 1.2]{berinde}, \cite{halpern,wit}, and {\em hybrid method} \cite{nakajo2003} (Solodov and Svaiter \cite{solodov2000} proposed the hybrid method to solve problems of finding the zeros of monotone operators). 
This paper focuses on the Krasnosel'ski\u\i-Mann algorithm that has practical applications,
such as analyses of dynamic systems governed by maximal monotone operators \cite{bot2015} and nonsmooth convex variational signal recovery \cite{comb2007},
defined as follows: given the current iterate $x_n \in H$ and step size $\alpha_n \in [0,1]$, the next iterate $x_{n+1}$ of the algorithm is
\begin{align}\label{kra}
x_{n+1} :=  x_n +  \alpha_n \left( T \left(x_n \right) - x_n \right).
\end{align}
Assuming that $(\alpha_n)_{n\in\mathbb{N}}$ satisfies the condition,
\begin{align}\label{Dunn} 
\sum_{n=0}^{\infty} \alpha_n (1-\alpha_n) = \infty, 
\end{align}
the sequence $(x_n)_{n\in\mathbb{N}}$ generated by Algorithm \eqref{kra} weakly converges to a fixed point of $T$ (see, e.g., \cite[Theorem 5.14]{b-c}).
This result indicates that Algorithm \eqref{kra} with constant step sizes (e.g., $\alpha_n := \alpha \in (0,1)$ $(n\in\mathbb{N})$) or diminishing step sizes (e.g., $\alpha_n := 1/(n+1)$ $(n\in\mathbb{N})$) can solve Problem \eqref{prob:1}.
Propositions 10 and 11 in \cite{cominetti2014} indicate that Algorithm \eqref{kra} with Condition \eqref{Dunn}
has the following rate of convergence: for all $n\in \mathbb{N}$,
\begin{align}\label{rate_kra}
\left\|x_n - T \left(x_n\right) \right\| =O \left( \left\{ \sum_{k=0}^n \alpha_k \left(1-\alpha_k \right) \right\}^{-\frac{1}{2}}  \right)
\end{align}
(e.g., $\|x_n - T(x_n)\| = O(1/\sqrt{n+1})$ when $\alpha_n := \alpha \in (0,1)$ $(n\in\mathbb{N})$).
This fact implies that Algorithm \eqref{kra} with \eqref{Dunn} does not always have fast convergence and has motivated the development of modifications and variants 
for the Krasnosel'ski\u\i-Mann algorithm in order to accelerate Algorithm \eqref{kra}.

One approach to accelerate Algorithm \eqref{kra} with \eqref{Dunn} is to develop line search methods that can determine a more adequate step size than a step size satisfying \eqref{Dunn} at each iteration $n$ 
so that the value of $\|x_{n+1} - T(x_{n+1})\|$ decreases dramatically.
Magnanti and Perakis proposed an {\em adaptive line search framework} \cite[Section 2]{mag2004} that can determine step sizes to satisfy weaker conditions \cite[Assumptions A1 and A2]{mag2004} than \eqref{Dunn}. On the basis of this framework, they showed that Algorithm \eqref{kra}, with step sizes $\alpha_n$ satisfying the following {\em Armijo-type} condition, converges to a fixed point of $T$ \cite[Theorems 4 and 8]{mag2004}: given $x_n \in \mathbb{R}^N$, $\beta > 0, D > 0$, and $b\in (0,1)$, choose the smallest nonnegative integer $l_n$ so that $\alpha_n = b^{l_n}$ satisfies the condition,
\begin{align}\label{armijo0}
g_n\left(\alpha_n \right) - g_n\left(0 \right) \leq - D b^{l_n} \left\| T \left(x_n\right) - x_n \right\|^2,
\end{align}
where $g_n \colon [0,1] \to \mathbb{R}$ is a potential function \cite[Scheme IV]{mag2004} 
defined for all $\alpha \in [0,1]$ by
\begin{align}\label{gn}
\begin{split}
g_n \left(\alpha\right) 
&:= \| \left( x_n +  \alpha \left( T \left(x_n \right) - x_n \right) \right)
  - T \left( x_n +  \alpha \left( T \left(x_n \right) - x_n \right) \right)\|^2\\ 
&\quad  - \beta \alpha \left( 1 - \alpha \right) \|  T\left(x_n\right) - x_n  \|^2.
\end{split}
\end{align}
Theorem 5 in \cite{mag2004} shows that 
Algorithm \eqref{kra} with the Armijo-type condition \eqref{armijo0}
satisfies that $\| x_{n+1} - T(x_{n+1}) \|^2 \leq [1 - \beta (\alpha_n - 1/2)^2] \| x_n- T(x_n) \|^2$ $(n\in \mathbb{N})$,
which implies that the algorithm has that, for all $n\in \mathbb{N}$,
\begin{align}\label{rate_mag}
\left\|x_n - T \left(x_n\right) \right\| = O\left( \left\{ \sum_{k=0}^n \left(\alpha_k- \frac{1}{2} \right)^2 \right\}^{-\frac{1}{2}} \right).
\end{align}

In this paper, we introduce a line search framework using $P_n$ defined by \eqref{simple1}, \eqref{simple0}, and \eqref{simple}, which is the simplest of all potential functions including $g_n$ defined as in \eqref{gn}:
given $x_n, d_n \in H$, for all $\alpha \in [0,1]$,
\begin{align}
&x_n \left( \alpha  \right) := x_n + \alpha d_n, \label{simple1} \\
&Q_n \left(\alpha \right) := x_n \left(\alpha \right) - T \left( x_n \left( \alpha \right) \right), \label{simple0} \\
&P_n \left(\alpha \right) := \left\| Q_n \left(\alpha \right)  \right\|^2.\label{simple}
\end{align}
When $d_n := -(x_n - T (x_n))$ and $\alpha_n$ is given as in \eqref{Dunn}, 
the point $x_n(\alpha_n)$ in \eqref{simple1} coincides with $x_{n+1}$ defined by Algorithm \eqref{kra} with \eqref{Dunn}. 
Consider the following problem of minimizing $P_n$ over $[0,1]$: 
\begin{align}\label{prob:2}
\text{Find } \alpha_n \in [0,1] \text{ such that } 
P_n \left(\alpha_n\right) = \min_{\alpha \in [0,1]} P_n \left(\alpha\right).
\end{align}
When the solution $\alpha_n$ to Problem \eqref{prob:2} can be obtained in each iteration, 
$P_n(\alpha_n) \leq P_n(0)$ holds for all $n \in \mathbb{N}$.
Accordingly, if the next iterate $x_{n+1}$ is defined by $x_{n+1} := x_n (\alpha_n)$,
$\| x_{n+1} - T ( x_{n+1} ) \| \leq \| x_n - T (x_n) \|$ $(n\in\mathbb{N})$ holds, i.e., 
$(\|x_n - T(x_n)  \|)_{n\in\mathbb{N}}$ is monotone decreasing.
Since the exact solution to Problem \eqref{prob:2} cannot be easily obtained,
the step size $\alpha_n$ can be chosen so as to yield an approximate minimum for Problem \eqref{prob:2} in each iteration, specifically, to satisfy the following {\em Wolfe-type} conditions \cite{wolfe1969,wolfe1971}: 
given $x_n, d_n \in H$, and 
$\delta, \sigma \in (0,1)$ with $\delta \leq \sigma$,
\begin{align}
&P_n \left(\alpha_n \right) - P_n \left(0 \right) 
\leq \delta \alpha_n \left\langle Q_n \left(0\right), d_n  \right\rangle, \label{armijo} \\
&\left\langle Q_n \left(\alpha_n \right), d_n \right\rangle 
\geq \sigma \left\langle Q_n \left(0\right), d_n  \right\rangle. \label{wolfe} 
\end{align}

Condition \eqref{armijo} is the Armijo-type condition for $P_n$ (see \eqref{armijo0} for the Armijo-type condition with $d_n := - (x_n - T(x_n))$ for the potential function $g_n$).
Under the conditions that $d_n := - (x_n - T(x_n))$ and $x_{n+1} := x_n(\alpha_n)$ $(n\in\mathbb{N})$, 
Algorithm \eqref{kra} with \eqref{armijo} satisfies
$\| x_{n+1} - T(x_{n+1}) \|^2 \leq (1 - \delta \alpha_n) \|x_n - T(x_n)\|^2$ $(n\in \mathbb{N})$, which implies that, for all $n\in \mathbb{N}$,\footnote{See Theorem \ref{rate}(i) for the details of the convergence rate of the proposed algorithm when $d_n := - (x_n - T(x_n))$ $(n\in\mathbb{N})$.}
\begin{align}\label{rate_steepest}
\left\|x_n - T \left(x_n\right) \right\| = O\left( \left\{  \sum_{k=0}^n \alpha_k \right\}^{-\frac{1}{2}} \right).
\end{align}

Here, let us see how the step size conditions \eqref{Dunn}, \eqref{armijo0}, \eqref{armijo}, and \eqref{wolfe} 
affect the efficiency of Algorithm \eqref{kra}.
Algorithm \eqref{kra} with \eqref{Dunn} satisfies $\| x_{n+1} - T ( x_{n+1} ) \|^2 \leq \| x_n - T (x_n) \|^2$ 
$(n\in\mathbb{N})$ \cite[(5.14)]{b-c}, 
while Algorithm \eqref{kra} with each of \eqref{armijo0} and \eqref{armijo} satisfies 
$\| x_{n+1} - T ( x_{n+1} ) \|^2  < \| x_n - T (x_n) \|^2$ $(n\in\mathbb{N})$. 
Hence, it can be expected that Algorithm \eqref{kra} with each of \eqref{armijo0} and \eqref{armijo} performs better than Algorithm \eqref{kra} with \eqref{Dunn}.
Since the Armijo-type conditions \eqref{armijo0} and \eqref{armijo} are satisfied for all sufficiently small values of $\alpha_n$ \cite[Subchapter 3.1]{noce}, there is a possibility that Algorithm \eqref{kra} with only the Armijo-type condition \eqref{armijo0} does not make reasonable progress.
Meanwhile, \eqref{wolfe} based on the {\em curvature condition} \cite[Subchapter 3.1]{noce} is used to ensure that $\alpha_n$ is not too small and that unacceptably short steps are ruled out.
Therefore, the Wolfe-type conditions \eqref{armijo} and \eqref{wolfe} should be used to secure efficiency of the algorithm. Moreover, even when $\alpha_n$ satisfying \eqref{armijo0} is not small enough, it can be expected that Algorithm \eqref{kra} with the Wolfe-type conditions \eqref{armijo} and \eqref{wolfe} will have a better convergence rate than Algorithm \eqref{kra} with the Armijo-type condition \eqref{armijo0} because of \eqref{rate_mag}, \eqref{rate_steepest}, and $(\alpha - 1/2)^2 \leq \alpha$ $(\alpha \in [(2-\sqrt{3})/2,1])$. Section \ref{sec:3} introduces the line search algorithm \cite[Algorithm 4.6]{lewis2013} to compute step sizes satisfying \eqref{armijo} and \eqref{wolfe} with appropriately chosen $\delta$ and $\sigma$ and gives performance comparisons of Algorithm \eqref{kra} with each of \eqref{Dunn} and \eqref{armijo0} with the one with \eqref{armijo} and \eqref{wolfe}.

The main concern regarding this line search is how the direction $d_n$ should be updated to accelerate the search for a fixed point of $T$. To address this concern, the following problem will be discussed:
\begin{align}\label{application0}
\text{Minimize } f \left(x \right) \text{ subject to } x \in H,
\end{align}
where $f\colon H \to \mathbb{R}$ is convex and Fr\'echet differentiable and $\nabla f \colon H \to H$ is Lipschitz continuous with a constant $L$. 
Let us define $T^{(f)} \colon H \to H$ by 
\begin{align}\label{example}
T^{\left(f\right)} := \mathrm{Id} - \lambda \nabla f,
\end{align}  
where $\mathrm{Id}$ stands for the identity mapping on $H$ and $\lambda > 0$.
The mapping $T^{(f)}$ satisfies the nonexpansivity condition for $\lambda \in (0,2/L]$ \cite[Proposition 2.3]{iiduka_JOTA} and 
$\mathrm{Fix}(T^{(f)})$ coincides with the solution set of Problem \eqref{application0}. 
From $T^{(f)} (x)-x = (x - \lambda \nabla f(x)) -x = - \lambda \nabla f(x)$ 
$(\lambda > 0, x\in H)$,
Algorithm \eqref{kra} for solving Problem \eqref{application0} is 
\begin{align}\label{steepest}
x_{n+1} = x_n + \alpha_n \left( T^{\left(f\right)} \left( x_n \right) - x_n  \right) = x_n - \lambda \alpha_n \nabla f \left(x_n \right).
\end{align}
This means that the direction $d_n^{(f)} := -(x_n - T^{(f)} (x_n)) = - \lambda \nabla f (x_n)$ is the {\em steepest descent direction} of $f$ at $x_n$ and
Algorithm \eqref{kra} with $T^{(f)}$ (i.e., Algorithm \eqref{steepest}) is the {\em steepest descent method} \cite[Subchapter 3.3]{noce} for Problem \eqref{application0}.

There are many algorithms with useful search directions \cite[Chapters 5-19]{noce} to accelerate the steepest descent method for unconstrained optimizations. In particular, algorithms with {\em nonlinear conjugate gradient directions} \cite{hager2006}, \cite[Subchapter 5.2]{noce},
\begin{align}\label{conjugate}
d_{n+1}^{\left(f\right)} := - \nabla f \left(x_{n+1} \right) + \beta_n d_n^{\left(f\right)},
\end{align} 
where $\beta_n \in \mathbb{R}$, have been widely used as efficient accelerated versions for most gradient methods. Well-known formulas for $\beta_n$ include the Hestenes--Stiefel (HS) \cite{HS1952}, Fletcher--Reeves (FR) \cite{FR1964}, Polak--Ribi\`ere--Polyak (PRP) \cite{PR1969,P1969}, and Dai--Yuan (DY) \cite{DY1999} formulas:
\begin{align}\label{beta}
\begin{split}
\beta_n^{\mathrm{HS}} := \frac{\left\langle \nabla f \left(x_{n+1} \right), y_n \right\rangle}{\left\langle d_n, y_n \right\rangle}, \text{ }
\beta_n^{\mathrm{FR}} := \frac{\left\| \nabla f \left(x_{n+1}\right) \right\|^2}{\left\| \nabla f\left(x_n\right) \right\|^2},\\
\beta_n^{\mathrm{PRP}} := \frac{\left\langle \nabla f \left(x_{n+1}\right), y_n \right\rangle}{\left\| \nabla f\left(x_n\right) \right\|^2},
\text{ }
\beta_n^{\mathrm{DY}} := \frac{\left\| \nabla f \left(x_{n+1}\right) \right\|^2}{\left\langle d_n, y_n \right\rangle},
\end{split}
\end{align}
where $y_n := \nabla f (x_{n+1}) - \nabla f(x_n)$.

Motivated by these observations, we decided to use the following direction to accelerate the search for a fixed point of $T$, which can be obtained by replacing $\nabla f$ in \eqref{conjugate} with $\mathrm{Id} - T$ (see also \eqref{example} for the relationship between $\nabla f$ and $T^{(f)}$): given the current direction $d_n \in H$, the current iterate $x_n \in H$, and a step size $\alpha_n$ satisfying \eqref{armijo} and \eqref{wolfe}, the next direction $d_{n+1}$ is defined by 
\begin{align}\label{new}
d_{n+1} := - \left( x_{n+1} - T \left(x_{n+1} \right) \right) + \beta_n d_n,
\end{align}
where $\beta_n$ is given by one of the formulas in \eqref{beta} when $\nabla f = \mathrm{Id} - T$.

This paper proposes iterative algorithms (Algorithm \ref{algo:1}) that use the direction \eqref{new} and step sizes satisfying the Wolfe-type conditions \eqref{armijo} and \eqref{wolfe} for solving Problem \eqref{prob:1} and describes their convergence analyses
(Theorems \ref{thm:1}--\ref{thm:5}). 
We also provide their convergence rate analyses (Theorem \ref{rate}).

The main contribution of this paper is to enable us to propose nonlinear conjugate gradient algorithms for 
{\em constrained smooth convex optimization} which are examples of the proposed line search fixed point algorithms,
in contrast to the previously reported results for nonlinear conjugate gradient algorithms for unconstrained smooth nonconvex optimization \cite[Subchapter 5.2]{noce}, \cite{DY1999,FR1964,HZ2005,hager2006,HS1952,PR1969,P1969}.
Concretely speaking, our nonlinear conjugate gradient algorithms are obtained in the following steps. 
Given a nonempty, closed convex set $C \subset H$ and a convex function $f \colon H \to \mathbb{R}$
with the Lipschitz continuous gradient, 
let us define
\begin{align*}
T := P_C \left( \mathrm{Id} - \lambda \nabla f \right),
\end{align*} 
where $\lambda \in (0,2/L]$, $L$ is the Lipschitz constant of $\nabla f$, and $P_C$ stands for the metric projection onto $C$.
Then, Proposition 2.3 in \cite{iiduka_JOTA} indicates that the mapping $T$ is nonexpansive and satisfies 
\begin{align*}
\mathrm{Fix}\left(T \right) = \argmin_{x\in C} f \left(x\right).
\end{align*}
From \eqref{new} with $T := P_C ( \mathrm{Id} - \lambda \nabla f )$, the proposed nonlinear conjugate gradient algorithms for finding a point in 
$\mathrm{Fix}(T) = \argmin_{x\in C} f(x)$
can be expressed as follows:
given $x_n, d_n \in H$ and $\alpha_n$ satisfying \eqref{armijo} and \eqref{wolfe}, 
\begin{align*}
&x_{n+1} := x_n \left( \alpha_n \right) = x_n + \alpha_n d_n,\\
&d_{n+1} := - \left( x_{n+1} -  P_C \left( x_{n+1} - \lambda \nabla f \left(x_{n+1} \right) \right) \right) + \beta_n d_n,
\end{align*}
where $\beta_n \in \mathbb{R}$ is each of the following formulas:\footnote{To guarantee the convergence of the PRP and HS methods 
for unconstrained optimization, the formulas $\beta_n^{\mathrm{PRP}+} := \max \{\beta_n^{\mathrm{PRP}}, 0\}$ 
and $\beta_n^{\mathrm{HS}+} := \max \{\beta_n^{\mathrm{HS}}, 0\}$ were presented in \cite{powell1984}.
We use the modifications to perform the convergence analyses on the proposed line search fixed point algorithms.}
\begin{align}\label{formulas}
\begin{split}
&\beta_n^{\mathrm{HS}+} := \max \left\{\frac{\left\langle x_{n+1} - P_C \left( x_{n+1} - \lambda \nabla f \left(x_{n+1} \right) \right), y_n \right\rangle}{\left\langle d_n, y_n \right\rangle}, 0 \right\},\\
&\beta_n^{\mathrm{FR}} := \frac{\left\| x_{n+1} - P_C \left( x_{n+1} - \lambda \nabla f \left(x_{n+1} \right) \right) \right\|^2}{\left\| x_{n} - P_C \left( x_{n} - \lambda \nabla f \left(x_{n} \right) \right) \right\|^2},\\
&\beta_n^{\mathrm{PRP}+} := \max \left\{\frac{\left\langle x_{n+1} - P_C \left( x_{n+1} - \lambda \nabla f \left(x_{n+1} \right) \right), y_n \right\rangle}{\left\| x_{n} - P_C \left( x_{n} - \lambda \nabla f \left(x_{n} \right) \right) \right\|^2}, 0 \right\},\\
&\beta_n^{\mathrm{DY}} := \frac{\left\| x_{n+1} - P_C \left( x_{n+1} - \lambda \nabla f \left(x_{n+1} \right) \right) \right\|^2}{\left\langle d_n, y_n \right\rangle},
\end{split}
\end{align}
where $y_n := (x_{n+1} - P_C ( x_{n+1} - \lambda \nabla f (x_{n+1}) )) - (x_n - P_C ( x_{n} - \lambda \nabla f (x_{n}) ))$.
Our convergence analyses are performed by referring to useful results on unconstrained smooth nonconvex optimization 
(see \cite{Al-Baali1985,DY1999,gilbert1992,hager2006,wolfe1969,wolfe1971,zou1970} and references therein)
because the proposed fixed point algorithms are based on the steepest descent and nonlinear conjugate gradient directions for unconstrained smooth nonconvex optimization (see \eqref{application0}--\eqref{new}).
We would like to emphasize that 
combining unconstrained smooth nonconvex optimization theory with fixed point theory for nonexpansive mappings
enables us to develop the novel nonlinear conjugate gradient algorithms for constrained smooth convex optimization.
The nonlinear conjugate gradient algorithms are a concrete response to the issue of constrained smooth convex optimization that is whether or not we can present nonlinear conjugate gradient algorithms to solve constrained smooth convex optimization problems.

To verify whether the proposed nonlinear conjugate gradient algorithms are accelerations for solving practical problems, we apply them to constrained quadratic programming problems (Subsection \ref{subsec:3.2}) and 
generalized convex feasibility problems (Subsection \ref{subsec:3.1}) (see \cite{com1999,yamada} and references therein for the relationship between the generalized convex feasibility problem and signal processing problems), which are constrained smooth convex optimization problems and particularly interesting applications of Problem \eqref{prob:1}. Moreover, we numerically compare their abilities to solve concrete constrained quadratic programming problems and
generalized convex feasibility problems with those of previous algorithms based on the Krasnosel'ski\u\i-Mann algorithm (Algorithm \eqref{kra} with step sizes satisfying \eqref{Dunn} and Algorithm \eqref{kra} with step sizes satisfying \eqref{armijo0}) and 
show that they can find optimal solutions to these problems faster than the previous ones.

Throughout this paper, we shall let $\mathbb{N}$ be the set of zero and all positive integers, $\mathbb{R}^d$ be a $d$-dimensional Euclidean space, $H$ be a real Hilbert space with inner product $\langle \cdot, \cdot \rangle$ and its induced norm $\| \cdot \|$, and $T\colon H \to H$ be a nonexpansive mapping with $\mathrm{Fix}(T) := \{ x\in H \colon T(x) = x \} \neq \emptyset$.

\section{Line search fixed point algorithms based on nonlinear conjugate gradient directions}\label{sec:2}
Let us begin by explicitly stating our algorithm for solving Problem \eqref{prob:1} discussed in Section \ref{sec:1}.

\begin{algo}\label{algo:1}
\text{}

Step 0.
Take $\delta, \sigma \in (0,1)$ with $\delta \leq \sigma$. 
Choose $x_0 \in H$ arbitrarily and set $d_0 := -(x_0 - T(x_0))$ and $n:= 0$.

Step 1.
Compute $\alpha_n \in (0,1]$ satisfying 
\begin{align}
\left\| x_n \left( \alpha_n  \right) - T \left( x_n \left( \alpha_n  \right) \right)  \right\|^2 
- \left\| x_n - T \left( x_n \right)  \right\|^2
&\leq \delta \alpha_n \left\langle  x_n - T\left(x_n\right), d_n \right\rangle,\label{Armijo} \\
\left\langle  x_n\left( \alpha_n  \right) - T\left(x_n \left( \alpha_n  \right) \right), d_n \right\rangle
&\geq \sigma \left\langle  x_n - T\left(x_n \right), d_n \right\rangle,\label{Wolfe}
\end{align}
where $x_n(\alpha_n) := x_n + \alpha_n d_n$.
Compute $x_{n+1} \in H$ by
\begin{align}\label{xn}
x_{n+1} := x_n + \alpha_n d_n.
\end{align}

Step 2. 
If $\| x_{n+1} - T(x_{n+1}) \|= 0$, stop.
Otherwise, go to Step 3.

Step 3. 
Compute $\beta_n \in \mathbb{R}$ by using each of the following formulas:
\begin{align}\label{Beta}
\begin{split}
&\beta_n^{\mathrm{SD}} := 0,\\
&\beta_n^{\mathrm{HS}+} := \max \left\{\frac{\left\langle x_{n+1} - T \left(x_{n+1}\right), y_n \right\rangle}{\left\langle d_n, y_n \right\rangle}, 0 \right\}, \text{ }
\beta_n^{\mathrm{FR}} := \frac{\left\| x_{n+1} - T \left(x_{n+1}\right) \right\|^2}{\left\| x_{n} - T \left(x_{n}\right) \right\|^2},\\
&\beta_n^{\mathrm{PRP}+} := \max \left \{\frac{\left\langle x_{n+1} - T \left(x_{n+1}\right), y_n \right\rangle}{\left\| x_{n} - T \left(x_{n}\right) \right\|^2}, 0 \right\},
\text{ }
\beta_n^{\mathrm{DY}} := \frac{\left\| x_{n+1} - T \left(x_{n+1}\right) \right\|^2}{\left\langle d_n, y_n \right\rangle},
\end{split}
\end{align}
where $y_n := (x_{n+1} - T(x_{n+1})) - (x_n - T(x_n))$.
Generate $d_{n+1} \in H$ by
\begin{align*}
d_{n+1} := - \left( x_{n+1} - T \left(x_{n+1}\right) \right) + \beta_n d_n.
\end{align*}

Step 4. 
Put $n := n+1$ and go to Step 1.
\end{algo}

We need to use appropriate line search algorithms to compute $\alpha_n$ $(n\in \mathbb{N})$ satisfying \eqref{Armijo} and \eqref{Wolfe}. In Section \ref{sec:3}, we use a useful one (Algorithm \ref{line_search}) \cite[Algorithm 4.6]{lewis2013} that can obtain the step sizes satisfying \eqref{Armijo} and \eqref{Wolfe} whenever the line search algorithm terminates \cite[Theorem 4.7]{lewis2013}. Although the efficiency of the line search algorithm depends on the parameters $\delta$ and $\sigma$, thanks to the reference \cite[Subsection 6.1]{lewis2013}, we can set appropriate $\delta$ and $\sigma$ before executing it \cite[Algorithm 4.6]{lewis2013} and Algorithm \ref{algo:1}. See Section \ref{sec:3} for the numerical performance of the line search algorithm \cite[Algorithm 4.6]{lewis2013} and Algorithm \ref{algo:1}.

It can be seen that Algorithm \ref{algo:1} is well-defined when $\beta_n$ is defined by $\beta_n^{\mathrm{SD}}$, $\beta_n^{\mathrm{FR}}$, or $\beta_n^{\mathrm{PRP}+}$.
The discussion in Subsection \ref{subsec:DY} shows that Algorithm \ref{algo:1} with $\beta_n = \beta_n^{\mathrm{DY}}$ is well-defined (Lemma \ref{lem:DY}(i)). Moreover, it is guaranteed that under certain assumptions, Algorithm \ref{algo:1} with $\beta_n = \beta_n^{\mathrm{HS}+}$ is well-defined (Theorem \ref{thm:5}).

\subsection{Algorithm \ref{algo:1} with $\beta_n = \beta_n^{\mathrm{SD}}$}\label{subsec:SD}
This subsection considers Algorithm \ref{algo:1} with $\beta_n^{\mathrm{SD}}$ $(n\in\mathbb{N})$, 
which is based on
the steepest descent (SD) direction (see \eqref{steepest}), i.e.,
\begin{align}\label{KM}
x_{n+1} := x_n + \alpha_n \left( T \left(x_n\right) - x_n \right) \text{ } \left( n\in \mathbb{N} \right).
\end{align}
Theorems 4 and 8 in \cite{mag2004} indicate that, if $(\alpha_n)_{n\in\mathbb{N}}$ satisfies  
the Armijo-type condition \eqref{armijo0},
Algorithm \eqref{KM} converges to a fixed point of $T$.
The following theorem says that Algorithm \eqref{KM}, with $(\alpha_n)_{n\in\mathbb{N}}$ satisfying the Wolfe-type conditions \eqref{Armijo} and \eqref{Wolfe},
converges to a fixed point of $T$.

\begin{thm}\label{thm:1}
Suppose that $(x_n)_{n\in\mathbb{N}}$ is the sequence generated by Algorithm \ref{algo:1} with $\beta_n = \beta_n^{\mathrm{SD}}$ $(n\in\mathbb{N})$. Then, $(x_n)_{n\in\mathbb{N}}$ either terminates at a fixed point of $T$ or 
\begin{align*}
\lim_{n\to \infty} \left\|x_n - T \left(x_n \right) \right\| = 0.
\end{align*}
In the latter situation,
$(x_n)_{n\in\mathbb{N}}$ weakly converges to a fixed point of $T$.
\end{thm}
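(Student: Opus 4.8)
The plan is to exploit the collapse of the direction recursion when $\beta_n = \beta_n^{\mathrm{SD}} = 0$. Since $d_0 = -(x_0 - T(x_0))$ and Step 3 gives $d_{n+1} = -(x_{n+1} - T(x_{n+1}))$, induction yields $d_n = -(x_n - T(x_n)) = T(x_n) - x_n$ for every $n$, so \eqref{xn} is exactly the Krasnosel'ski\u\i--Mann iteration \eqref{KM} with $\alpha_n \in (0,1]$. Writing $a_n := \|x_n - T(x_n)\|^2$ and observing $\langle x_n - T(x_n), d_n\rangle = -a_n$, the Armijo-type condition \eqref{Armijo} reads $a_{n+1} \leq (1 - \delta\alpha_n)a_n$. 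Because $\delta \in (0,1)$ and $\alpha_n \in (0,1]$, the factor lies in $[1-\delta,1)$, so $(a_n)_{n\in\mathbb{N}}$ is nonincreasing; the algorithm either reaches some $x_n$ with $a_n = 0$, a fixed point of $T$ by Step 2, or satisfies $a_n > 0$ for all $n$.

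Assuming the latter, the first substantive step is to extract from the curvature condition \eqref{Wolfe} a uniform positive lower bound on $\alpha_n$. I would expand $Q_n(\alpha_n) - Q_n(0) = \alpha_n d_n - (T(x_n(\alpha_n)) - T(x_n))$ and pair with $d_n$; nonexpansivity of $T$ together with Cauchy--Schwarz gives $\langle Q_n(\alpha_n) - Q_n(0), d_n\rangle \leq 2\alpha_n \|d_n\|^2 = 2\alpha_n a_n$. On the other hand, rewriting \eqref{Wolfe} as $\langle Q_n(\alpha_n), d_n\rangle \geq -\sigma a_n$ and subtracting $\langle Q_n(0), d_n\rangle = -a_n$ yields $\langle Q_n(\alpha_n) - Q_n(0), d_n\rangle \geq (1-\sigma)a_n$. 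Comparing the two bounds and dividing by $a_n > 0$ gives $\alpha_n \geq (1-\sigma)/2 > 0$. Substituting back into $a_{n+1} \leq (1-\delta\alpha_n)a_n$ produces $a_{n+1} \leq (1 - \delta(1-\sigma)/2)\,a_n$ with contraction factor in $(0,1)$, whence $a_n \to 0$, that is, $\lim_{n\to\infty}\|x_n - T(x_n)\| = 0$. (This in fact gives geometric decay, stronger than the bare limit claimed.)

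For the weak convergence, I would use that $\alpha_n \in (0,1]$ makes the iterate a genuine convex combination $x_{n+1} = (1-\alpha_n)x_n + \alpha_n T(x_n)$. Applying the identity $\|(1-t)u + tv\|^2 = (1-t)\|u\|^2 + t\|v\|^2 - t(1-t)\|u-v\|^2$ with $t = \alpha_n$, $u = x_n - p$, $v = T(x_n) - p$ for $p \in \mathrm{Fix}(T)$, and using $\|T(x_n) - p\| \leq \|x_n - p\|$, I obtain $\|x_{n+1} - p\| \leq \|x_n - p\|$; hence $(x_n)_{n\in\mathbb{N}}$ is Fej\'er monotone with respect to $\mathrm{Fix}(T)$, so it is bounded and $(\|x_n - p\|)_{n\in\mathbb{N}}$ converges for each $p \in \mathrm{Fix}(T)$. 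Any weak sequential cluster point $x^\star$ satisfies $x^\star \in \mathrm{Fix}(T)$ by demiclosedness of $\mathrm{Id} - T$ at $0$ (e.g.\ \cite[Theorem 4.27]{b-c}), since $\|x_n - T(x_n)\| \to 0$. The standard Fej\'er-monotone argument (e.g.\ \cite[Theorem 5.5]{b-c}) then upgrades this to weak convergence of the whole sequence to a point of $\mathrm{Fix}(T)$.

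The main obstacle is the lower bound $\alpha_n \geq (1-\sigma)/2$ of the second paragraph: this is precisely where the curvature condition \eqref{Wolfe} earns its role by ruling out vanishing steps, and where nonexpansivity of $T$ converts the abstract curvature inequality into the clean estimate $\langle Q_n(\alpha_n) - Q_n(0), d_n\rangle \leq 2\alpha_n a_n$. Once this bound is in hand, both the decay $a_n \to 0$ and the weak-convergence machinery are routine.
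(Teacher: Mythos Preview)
Your proof is correct and follows essentially the same route as the paper's. The key estimate---combining the Wolfe condition \eqref{Wolfe} with nonexpansivity and Cauchy--Schwarz to obtain $(1-\sigma)a_n \leq \langle Q_n(\alpha_n)-Q_n(0),d_n\rangle \leq 2\alpha_n a_n$, hence $\alpha_n \geq (1-\sigma)/2$---is exactly the chain of inequalities in the paper's Lemma~\ref{Zou} (see \eqref{eq:1} there), specialized to $d_n = -(x_n - T(x_n))$.

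The only differences are organizational. The paper proves Lemma~\ref{Zou} as a general Zoutendijk-type result valid for any descent direction (so it can be reused for the DY, FR, PRP+, and HS+ analyses later), concluding only that $\sum_n \|x_n - T(x_n)\|^2 < \infty$; you work directly in the SD case and feed the step-size lower bound into $a_{n+1}\leq(1-\delta\alpha_n)a_n$ to obtain the \emph{geometric} decay $a_{n+1}\leq (1-\delta(1-\sigma)/2)\,a_n$, which is strictly stronger than both the paper's Theorem~\ref{thm:1} and its rate in Theorem~\ref{rate}(i). For the weak convergence the paper gives a direct Opial-type argument, while you invoke demiclosedness of $\mathrm{Id}-T$ and the standard Fej\'er-monotone convergence theorem; these are equivalent presentations of the same mechanism.
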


\subsubsection{Proof of Theorem \ref{thm:1}}
If $m \in \mathbb{N}$ exists such that $\| x_{m} - T(x_{m}) \| = 0$, Theorem \ref{thm:1} holds.
Accordingly, it can be assumed that, for all $n\in\mathbb{N}$, $\| x_n - T (x_n) \| \neq 0$ holds.

First, the following lemma can be proven by referring to \cite{wolfe1969,wolfe1971,zou1970}.
\begin{lem}\label{Zou}
Let $(x_n)_{n\in\mathbb{N}}$ and $(d_n)_{n\in\mathbb{N}}$ be the sequences generated by Algorithm \ref{algo:1}.
Assume that $\langle x_n - T(x_n), d_n \rangle < 0$ for all $n\in \mathbb{N}$.
Then, 
\begin{align*}
\sum_{n=0}^{\infty} \left( \frac{\left\langle x_n - T(x_n), d_n \right\rangle}{\left\| d_n \right\|} \right)^2 < \infty.
\end{align*}
\end{lem}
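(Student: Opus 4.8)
This is the fixed-point analogue of the classical Zoutendijk condition from unconstrained optimization, where the role of the gradient $\nabla f(x_n)$ is played by the residual $x_n - T(x_n)$. The plan is to mimic the standard proof (as in the cited Wolfe and Zoutendijk references) by treating $R_n := x_n - T(x_n)$ as a gradient surrogate. First I would establish a Lipschitz-type estimate for the residual map $\mathrm{Id} - T$: since $T$ is nonexpansive, the map $x \mapsto x - T(x)$ is Lipschitz continuous with constant $2$, i.e.\ $\| (x - T(x)) - (y - T(y)) \| \le \| x - y \| + \| T(x) - T(y) \| \le 2 \| x - y \|$. This furnishes the replacement for the usual $\| \nabla f(x) - \nabla f(y) \| \le L \| x - y \|$.

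**Deriving a lower bound on the step size from the curvature condition.** Next I would use the Wolfe-type curvature condition \eqref{Wolfe} to bound $\alpha_n$ from below. Subtracting $\langle R_n, d_n\rangle$ from both sides of \eqref{Wolfe} gives
\begin{align*}
\langle Q_n(\alpha_n) - Q_n(0), d_n \rangle \geq (\sigma - 1) \langle R_n, d_n \rangle,
\end{align*}
where $Q_n(\alpha_n) = x_n(\alpha_n) - T(x_n(\alpha_n)) = R_{n+1}$ and $Q_n(0) = R_n$. The left-hand side is bounded above via Cauchy--Schwarz and the Lipschitz estimate by $\| Q_n(\alpha_n) - Q_n(0)\| \, \|d_n\| \le 2 \| x_n(\alpha_n) - x_n \| \, \| d_n\| = 2 \alpha_n \| d_n\|^2$. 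Combining these, and using that $(\sigma - 1) < 0$ together with $\langle R_n, d_n\rangle < 0$ (so the right-hand side is positive), yields a lower bound of the form
\begin{align*}
\alpha_n \geq \frac{(1 - \sigma)\left( - \langle R_n, d_n \rangle \right)}{2 \| d_n\|^2}.
\end{align*}

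**Combining with the Armijo condition and telescoping.** Then I would feed this lower bound into the Armijo-type descent condition \eqref{Armijo}, which reads $\| R_{n+1}\|^2 - \|R_n\|^2 \le \delta \alpha_n \langle R_n, d_n\rangle$. Since $\langle R_n, d_n\rangle < 0$, substituting the lower bound on $\alpha_n$ produces a per-iteration decrease
\begin{align*}
\| R_{n+1}\|^2 - \| R_n \|^2 \leq - \frac{\delta (1-\sigma)}{2} \left( \frac{\langle R_n, d_n\rangle}{\| d_n\|} \right)^2.
\end{align*}
Summing this telescoping inequality from $n = 0$ to $N$ gives $\frac{\delta(1-\sigma)}{2} \sum_{n=0}^{N} (\langle R_n, d_n\rangle / \|d_n\|)^2 \le \| R_0 \|^2 - \| R_{N+1}\|^2 \le \| R_0\|^2$, and letting $N \to \infty$ yields the claimed finiteness of the series.

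**Anticipated obstacle.** The routine calculations are straightforward; the only point demanding care is the constant in the Lipschitz bound for $\mathrm{Id} - T$. Because $T$ is merely nonexpansive (not firmly nonexpansive or averaged), the naive triangle-inequality bound gives constant $2$ rather than $1$, and I would need to confirm this suffices to keep $(1-\sigma)/2 > 0$ as the effective Zoutendijk constant — it does, since $\sigma \in (0,1)$. A secondary subtlety is ensuring the denominators $\|d_n\|$ are nonzero, but the standing assumption $\langle R_n, d_n\rangle < 0$ forces $d_n \neq 0$, so no degeneracy arises.
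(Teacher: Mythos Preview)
Your proposal is correct and follows essentially the same route as the paper's proof: both use the nonexpansivity of $T$ to bound $\langle R_{n+1}-R_n,d_n\rangle\le 2\alpha_n\|d_n\|^2$, combine this with the curvature condition \eqref{Wolfe} to obtain the lower bound $\alpha_n\ge (1-\sigma)(-\langle R_n,d_n\rangle)/(2\|d_n\|^2)$, substitute into the Armijo condition \eqref{Armijo}, and telescope. The paper also notes, as you do, that $\langle R_n,d_n\rangle<0$ ensures $\|d_n\|\neq 0$.
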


\begin{proof}
The Cauchy-Schwarz inequality and the triangle inequality ensure that, for all $n\in\mathbb{N}$,
$\langle d_n, ( x_{n+1} - T ( x_{n+1}) ) - (x_n - T (x_n ) ) \rangle
\leq 
\| d_n \| \| ( x_{n+1} - T ( x_{n+1}) ) -  (x_n - T (x_n ) ) \|
\leq 
\| d_n \| ( \| T ( x_{n} ) - T (x_{n+1} ) \| + \| x_{n+1} - x_n \| )$,
which, together with the nonexpansivity of $T$ and \eqref{xn}, implies that, for all $n\in \mathbb{N}$,
\begin{align*}
\left\langle d_n, \left( x_{n+1} - T \left( x_{n+1}  \right) \right) - \left(x_n - T \left(x_n\right) \right)  \right\rangle
\leq 2 \alpha_n \left\| d_n \right\|^2.
\end{align*}
Moreover, \eqref{Wolfe} means that, for all $n\in\mathbb{N}$,
\begin{align*}
\left\langle d_n, \left( x_{n+1} - T \left( x_{n+1}  \right) \right) - \left(x_n - T \left(x_n\right) \right)  \right\rangle
\geq \left(\sigma -1 \right)\left\langle d_n, x_{n} - T \left( x_{n}  \right) \right\rangle.
\end{align*}
Accordingly, for all $n\in\mathbb{N}$,
\begin{align*}
\left(\sigma -1 \right)\left\langle d_n, x_{n} - T \left( x_{n}  \right) \right\rangle \leq 2 \alpha_n \left\| d_n \right\|^2.
\end{align*}
Since $\|d_n\| \neq 0$ $(n\in\mathbb{N})$ holds from $\langle x_n - T(x_n), d_n \rangle < 0$ $(n\in\mathbb{N})$, we find that, for all $n\in\mathbb{N}$, 
\begin{align}\label{eq:1}
\frac{\left(\sigma -1 \right)\left\langle d_n, x_{n} - T \left( x_{n}  \right) \right\rangle}{2 \left\| d_n \right\|^2} \leq \alpha_n.
\end{align}
Condition \eqref{Armijo} means that, for all $n\in\mathbb{N}$, 
$\| x_{n+1} - T(x_{n+1} )\|^2 - \|x_n - T (x_n) \|^2 
\leq \delta \alpha_n \langle x_n - T (x_n ), d_n \rangle$,
which, together with $\langle x_n - T(x_n), d_n \rangle < 0$ $(n\in\mathbb{N})$, implies that, for all $n\in\mathbb{N}$,
\begin{align}\label{eq:2}
\alpha_n \leq \frac{\left\|x_n - T\left(x_n\right) \right\|^2 - \left\| x_{n+1} - T \left(x_{n+1} \right) \right\|^2}{-\delta \left\langle x_n - T \left(x_n \right), d_n \right\rangle}.
\end{align}
From \eqref{eq:1} and \eqref{eq:2}, for all $n\in\mathbb{N}$,
\begin{align*}
\frac{\left(\sigma -1 \right)\left\langle d_n, x_{n} - T \left( x_{n}  \right) \right\rangle}{2 \left\| d_n \right\|^2}
\leq 
\frac{\left\|x_n - T\left(x_n\right) \right\|^2 - \left\| x_{n+1} - T \left(x_{n+1} \right) \right\|^2}{-\delta \left\langle x_n - T \left(x_n \right), d_n \right\rangle},
\end{align*}
which implies that, for all $n\in\mathbb{N}$,
\begin{align*}
\frac{\delta \left(1 - \sigma \right)\left\langle d_n, x_{n} - T \left( x_{n}  \right) \right\rangle^2}{2 \left\| d_n \right\|^2}
\leq 
\left\|x_n - T\left(x_n\right) \right\|^2 - \left\| x_{n+1} - T \left(x_{n+1} \right) \right\|^2.
\end{align*}
Summing up this inequality from $n=0$ to $n=N \in \mathbb{N}$ guarantees that, for all $N\in \mathbb{N}$,
\begin{align*}
\frac{\delta \left(1 - \sigma \right)}{2} \sum_{n=0}^N \frac{\left\langle d_n, x_{n} - T \left( x_{n}  \right) \right\rangle^2}{\left\| d_n \right\|^2}
&\leq \left\|x_0 - T\left(x_0\right) \right\|^2 - \left\| x_{N+1} - T \left(x_{N+1} \right) \right\|^2\\
&\leq \left\|x_0 - T\left(x_0\right) \right\|^2 < \infty.
\end{align*}
Therefore, the conclusion in Lemma \ref{Zou} is satisfied.
\end{proof}

Lemma \ref{Zou} leads to the following.

\begin{lem}\label{lem:sd}
Suppose that the assumptions of Theorem \ref{thm:1} are satisfied. Then,
\begin{enumerate}
\item[{\em (i)}]
$\lim_{n\to \infty} \| x_n - T(x_n) \|= 0$.
\item[{\em (ii)}]
$(\| x_n - x \|)_{n\in\mathbb{N}}$ is monotone decreasing for all $x\in \mathrm{Fix}(T)$.
\item[{\em (iii)}]
$(x_n)_{n\in\mathbb{N}}$ weakly converges to a point in $\mathrm{Fix}(T)$.
\end{enumerate} 
\end{lem}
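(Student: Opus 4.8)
The plan is to exploit the fact that the choice $\beta_n = \beta_n^{\mathrm{SD}} = 0$ collapses the direction update to $d_n = -(x_n - T(x_n))$ for every $n\in\mathbb{N}$, so that Algorithm \ref{algo:1} reduces exactly to the iteration \eqref{KM} and Lemma \ref{Zou} applies directly. Substituting this $d_n$ gives $\langle x_n - T(x_n), d_n \rangle = -\|x_n - T(x_n)\|^2$, which is strictly negative under the standing assumption $\|x_n - T(x_n)\| \neq 0$, so the hypothesis of Lemma \ref{Zou} is met. Since moreover $\|d_n\| = \|x_n - T(x_n)\|$, each summand in Lemma \ref{Zou} simplifies to
\begin{align*}
\left( \frac{\langle x_n - T(x_n), d_n \rangle}{\|d_n\|} \right)^2 = \|x_n - T(x_n)\|^2,
\end{align*}
and Lemma \ref{Zou} yields $\sum_{n=0}^\infty \|x_n - T(x_n)\|^2 < \infty$. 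The general term of a convergent series tends to $0$, which proves (i).

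For (ii), I would fix $x \in \mathrm{Fix}(T)$ and expand the squared distance using $x_{n+1} = x_n - \alpha_n(x_n - T(x_n))$:
\begin{align*}
\|x_{n+1} - x\|^2 = \|x_n - x\|^2 - 2\alpha_n \langle x_n - x, x_n - T(x_n) \rangle + \alpha_n^2 \|x_n - T(x_n)\|^2.
\end{align*}
The cross term is controlled by the elementary estimate $\langle x_n - x, x_n - T(x_n) \rangle \geq \tfrac{1}{2}\|x_n - T(x_n)\|^2$, obtained by expanding $\|T(x_n) - x\|^2 = \|(x_n - x) - (x_n - T(x_n))\|^2$ and using $\|T(x_n) - x\| = \|T(x_n) - T(x)\| \leq \|x_n - x\|$. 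Combining the two displays gives $\|x_{n+1} - x\|^2 \leq \|x_n - x\|^2 - \alpha_n(1 - \alpha_n)\|x_n - T(x_n)\|^2$; since $\alpha_n \in (0,1]$ forces $\alpha_n(1-\alpha_n) \geq 0$, the sequence $(\|x_n - x\|)_{n\in\mathbb{N}}$ is monotone decreasing.

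For (iii), the standard Opial-type argument applies. By (ii) the sequence $(\|x_n - x\|)_{n\in\mathbb{N}}$ converges for each $x \in \mathrm{Fix}(T)$; in particular $(x_n)_{n\in\mathbb{N}}$ is bounded and therefore possesses weak sequential cluster points. By (i) we have that $x_n - T(x_n) \to 0$ strongly, so the demiclosedness of $\mathrm{Id} - T$ at $0$ for the nonexpansive mapping $T$ (Browder's demiclosedness principle) ensures that every weak cluster point of $(x_n)_{n\in\mathbb{N}}$ lies in $\mathrm{Fix}(T)$. Opial's lemma---convergence of $(\|x_n - x\|)_{n\in\mathbb{N}}$ for all $x$ in a set together with the inclusion of every weak cluster point in that set---then forces $(x_n)_{n\in\mathbb{N}}$ to converge weakly to a single point of $\mathrm{Fix}(T)$.

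The computations in (i) and (ii) are routine once the direction $d_n = -(x_n - T(x_n))$ is identified; the only genuinely delicate point is the assembly in (iii), where I must match the two hypotheses of Opial's lemma precisely, namely invoke the demiclosedness principle with the correct strong limit $x_n - T(x_n) \to 0$ supplied by (i) in order to identify the cluster points as fixed points. I expect this bookkeeping---rather than any hard estimate---to be the main thing to get right.
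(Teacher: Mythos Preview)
Your proposal is correct and follows essentially the same approach as the paper: part (i) is identical (apply Lemma~\ref{Zou} with $d_n=-(x_n-T(x_n))$ to get $\sum\|x_n-T(x_n)\|^2<\infty$), and part (iii) is the same Opial-type argument, which the paper writes out by hand (deriving the demiclosedness contradiction from Opial's condition and then the uniqueness of the weak cluster point) rather than citing the demiclosedness principle and Opial's lemma as named results. The only cosmetic difference is in (ii): the paper uses the convexity/triangle-inequality estimate $\|x_{n+1}-x\|\le(1-\alpha_n)\|x_n-x\|+\alpha_n\|T(x_n)-T(x)\|\le\|x_n-x\|$ directly on the norm, whereas you expand the squared norm and use the firm-nonexpansivity-type bound $\langle x_n-x,\,x_n-T(x_n)\rangle\ge\tfrac12\|x_n-T(x_n)\|^2$; both are one-line consequences of nonexpansivity and yield the same monotonicity.
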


Items (i) and (iii) in Lemma \ref{lem:sd} indicate that Theorem \ref{thm:1} holds under the assumption that  
$\| x_n - T (x_n) \| \neq 0$ $(n\in\mathbb{N})$.

\begin{proof}
(i) 
In the case where $\beta_n := \beta_n^{\mathrm{SD}} = 0$ $(n\in\mathbb{N})$, $d_n = - (x_n - T(x_n))$ holds for all $n\in\mathbb{N}$.
Hence, $\langle x_n - T(x_n), d_n \rangle = - \|x_n - T(x_n)\|^2 < 0$ $(n\in\mathbb{N})$.
Lemma \ref{Zou} thus guarantees that
$\sum_{n=0}^\infty \| x_{n} - T ( x_{n} ) \|^2 < \infty$,
which implies $\lim_{n\to\infty} \| x_n - T(x_n) \|= 0$.

(ii)
The triangle inequality and the nonexpansivity of $T$ ensure that, for all $n\in \mathbb{N}$ and for all $x\in \mathrm{Fix}(T)$,
$\| x_{n+1} - x \|
= \| x_n + \alpha_n ( T (x_n) - x_n ) - x \|
\leq (1-\alpha_n ) \| x_n - x \| + \alpha_n \|T (x_n) - T (x)\|
\leq \| x_n - x \|$.

(iii) 
Lemma \ref{lem:sd}(ii) means that $\lim_{n\to\infty} \|x_n - x\|$ exists for all $x\in \mathrm{Fix}(T)$. Accordingly, $(x_n)_{n\in\mathbb{N}}$ is bounded. Hence, there is a subsequence $(x_{n_k})_{k\in\mathbb{N}}$ of $(x_n)_{n\in\mathbb{N}}$ such that $(x_{n_k})_{k\in\mathbb{N}}$ weakly converges to a point $x^* \in H$. Here, let us assume that $x^* \notin \mathrm{Fix}(T)$. Then, Opial's condition \cite[Lemma 1]{opial}, Lemma \ref{lem:sd}(i), and the nonexpansivity of $T$ guarantee that
\begin{align*}
\liminf_{k\to\infty} \left\| x_{n_{k}} - x^* \right\|
&< \liminf_{k\to\infty} \left\| x_{n_{k}} - T \left(x^* \right) \right\|\\
&= \liminf_{k\to\infty} \left\| x_{n_{k}} - T \left( x_{n_{k}} \right)
     + T \left( x_{n_{k}} \right) - T \left(x^* \right) \right\|\\
&=  \liminf_{k\to\infty} \left\|   T \left( x_{n_{k}} \right) - T \left(x^* \right) \right\|\\
&\leq  \liminf_{k\to\infty} \left\|    x_{n_{k}} - x^*  \right\|,
\end{align*}
which is a contradiction.
Hence, $x^* \in \mathrm{Fix}(T)$.
Let us take another subsequence $(x_{n_{i}})_{i\in \mathbb{N}}$ $(\subset (x_{n})_{n\in \mathbb{N}})$
which weakly converges to $x_{*} \in H$.
A similar discussion to the one for obtaining $x^* \in \mathrm{Fix}(T)$ ensures that $x_{*} \in \mathrm{Fix}(T)$.
Assume that $x^* \neq x_{*}$.
The existence of $\lim_{n\to\infty} \| x_n - x \|$ $(x\in \mathrm{Fix}(T))$ and Opial's condition \cite[Lemma 1]{opial} imply that
\begin{align*}
\lim_{n\to\infty} \left\| x_n - x^*  \right\| 
&= \lim_{k\to\infty} \left\| x_{n_{k}} - x^*  \right\|
< \lim_{k\to\infty} \left\| x_{n_{k}} - x_{*}  \right\|\\
&= \lim_{n\to\infty} \left\| x_{n} - x_{*}  \right\|
= \lim_{i\to\infty} \left\| x_{n_{i}} - x_{*}  \right\|\\
&< \lim_{i\to\infty} \left\| x_{n_{i}} - x^*  \right\|
= \lim_{n\to\infty} \left\| x_n - x^*  \right\|,
\end{align*}
which is a contradiction. 
Therefore, $x^* = x_*$.
Since any subsequence of $(x_n)_{n\in\mathbb{N}}$ weakly converges to the same fixed point of $T$, 
it is guaranteed that the whole $(x_n)_{n\in\mathbb{N}}$ weakly converges to a fixed point of $T$.
This completes the proof.
\end{proof}

\subsection{Algorithm \ref{algo:1} with $\beta_n = \beta_n^{\mathrm{DY}}$}\label{subsec:DY}
The following is a convergence analysis of Algorithm \ref{algo:1} with $\beta_n = \beta_n^{\mathrm{DY}}$.

\begin{thm}\label{thm:2}
Suppose that $(x_n)_{n\in\mathbb{N}}$ is the sequence generated by Algorithm \ref{algo:1} with $\beta_n = \beta_n^{\mathrm{DY}}$ $(n\in\mathbb{N})$. 
Then, $(x_n)_{n\in\mathbb{N}}$ either terminates at a fixed point of $T$ or 
\begin{align*}
\lim_{n\to \infty} \left\|x_n - T \left(x_n \right) \right\| = 0.
\end{align*}
\end{thm}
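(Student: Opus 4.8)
The plan is to recognize that, writing $r_n := x_n - T(x_n)$ (which plays the role of the gradient $\nabla f(x_n)$), Algorithm \ref{algo:1} with $\beta_n = \beta_n^{\mathrm{DY}}$ is exactly the Dai--Yuan nonlinear conjugate gradient iteration, and to adapt the classical Dai--Yuan convergence analysis to the fixed point setting. Here the direction is updated by $d_{n+1} = -r_{n+1} + \beta_n^{\mathrm{DY}} d_n$ with $\beta_n^{\mathrm{DY}} = \|r_{n+1}\|^2/\langle d_n, y_n\rangle$ and $y_n = r_{n+1} - r_n$, and the line search enforces \eqref{Armijo} and \eqref{Wolfe}. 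The overall strategy has four stages: (a) prove the descent property $\langle r_n, d_n\rangle < 0$ for all $n$, which simultaneously shows the denominator $\langle d_n, y_n\rangle$ is strictly positive so $\beta_n^{\mathrm{DY}}$ is well-defined (Lemma \ref{lem:DY}(i)); (b) invoke the Zoutendijk-type inequality of Lemma \ref{Zou}; (c) derive the Dai--Yuan recursive bound on $\|d_n\|^2/\langle r_n, d_n\rangle^2$; and (d) combine (b) and (c) with the monotonicity of $\|r_n\|$ to force the limit to be zero.

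For stage (a), I would first compute $\langle r_{n+1}, d_{n+1}\rangle = -\|r_{n+1}\|^2 + \beta_n^{\mathrm{DY}}\langle r_{n+1}, d_n\rangle$ and, using $\langle d_n, y_n\rangle = \langle r_{n+1}, d_n\rangle - \langle r_n, d_n\rangle$, simplify it to the key Dai--Yuan identity
\[
\langle r_{n+1}, d_{n+1}\rangle = \frac{\|r_{n+1}\|^2 \langle r_n, d_n\rangle}{\langle d_n, y_n\rangle},
\qquad\text{equivalently}\qquad
\beta_n^{\mathrm{DY}} = \frac{\langle r_{n+1}, d_{n+1}\rangle}{\langle r_n, d_n\rangle}.
\]
Then I argue by induction: $\langle r_0, d_0\rangle = -\|r_0\|^2 < 0$, and if $\langle r_n, d_n\rangle < 0$, the curvature condition \eqref{Wolfe} gives $\langle d_n, y_n\rangle \geq (\sigma - 1)\langle r_n, d_n\rangle > 0$, so the identity makes $\langle r_{n+1}, d_{n+1}\rangle$ a negative number over a positive one, hence negative. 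This closes the induction and verifies the hypothesis of Lemma \ref{Zou}.

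The heart of the argument, and the step I expect to be the main obstacle, is stage (c). Starting from $d_{n+1} + r_{n+1} = \beta_n^{\mathrm{DY}} d_n$, I take squared norms to obtain $\|d_{n+1}\|^2 = (\beta_n^{\mathrm{DY}})^2\|d_n\|^2 - 2\langle r_{n+1}, d_{n+1}\rangle - \|r_{n+1}\|^2$, divide through by $\langle r_{n+1}, d_{n+1}\rangle^2$, and use $\beta_n^{\mathrm{DY}} = \langle r_{n+1}, d_{n+1}\rangle/\langle r_n, d_n\rangle$ to collapse the first term into $\|d_n\|^2/\langle r_n, d_n\rangle^2$. Completing the square in the remaining two terms bounds them by $1/\|r_{n+1}\|^2$, yielding
\[
\frac{\|d_{n+1}\|^2}{\langle r_{n+1}, d_{n+1}\rangle^2} \leq \frac{\|d_n\|^2}{\langle r_n, d_n\rangle^2} + \frac{1}{\|r_{n+1}\|^2},
\]
and since $\|d_0\|^2/\langle r_0, d_0\rangle^2 = 1/\|r_0\|^2$, induction gives $\|d_n\|^2/\langle r_n, d_n\rangle^2 \leq \sum_{k=0}^n \|r_k\|^{-2}$. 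The delicate points are keeping the sign bookkeeping straight and spotting the completing-the-square manoeuvre that is special to the DY choice of $\beta_n$.

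Finally, for stage (d), the Armijo condition \eqref{Armijo} together with $\langle r_n, d_n\rangle < 0$ forces $\|r_{n+1}\|^2 < \|r_n\|^2$, so $(\|r_n\|)_{n\in\mathbb{N}}$ is monotone decreasing and its limit $\gamma \geq 0$ exists. If $\gamma > 0$, then $\|r_k\| \geq \gamma$ for all $k$, so the bound from stage (c) gives $\langle r_n, d_n\rangle^2/\|d_n\|^2 \geq \gamma^2/(n+1)$; summing over $n$ produces a divergent harmonic series, contradicting Lemma \ref{Zou}. Hence $\gamma = 0$, that is, $\lim_{n\to\infty}\|x_n - T(x_n)\| = 0$. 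I would emphasize that it is precisely the monotonicity supplied by the Armijo condition that upgrades the classical $\liminf = 0$ statement of Dai--Yuan theory to the full limit claimed in the theorem.
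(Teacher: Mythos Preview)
Your proposal is correct and follows essentially the same approach as the paper: the paper's proof is organized via Lemma \ref{lem:DY} into the descent property (your stage (a), establishing \eqref{DY1}), the recursive bound on $\|d_n\|^2/\langle r_n,d_n\rangle^2$ combined with Lemma \ref{Zou} (your stages (b)--(c)), and the upgrade from $\liminf$ to $\lim$ via the Armijo monotonicity (your stage (d)). The only cosmetic difference is that the paper first proves $\liminf_{n\to\infty}\|r_n\|=0$ and then invokes monotonicity, whereas you invoke monotonicity first and argue directly that the limit cannot be positive; the underlying computations are identical.
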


\subsubsection{Proof of Theorem \ref{thm:2}}
Since the existence of $m\in\mathbb{N}$ such that $\| x_m - T(x_m) \| = 0$ implies that Theorem \ref{thm:2} holds, it can be assumed that, for all $n\in\mathbb{N}$, $\| x_n - T (x_n) \| \neq 0$ holds. Theorem \ref{thm:2} can be proven by using the ideas presented in the proof of \cite[Theorem 3.3]{DY1999}. The proof of Theorem \ref{thm:2} is divided into three steps.

\begin{lem}\label{lem:DY}
Suppose that the assumptions in Theorem \ref{thm:2} are satisfied.
Then,
\begin{enumerate}
\item[{\em (i)}]
$\langle x_n - T(x_n), d_n \rangle < 0$ $(n\in\mathbb{N})$.
\item[{\em (ii)}]
$\liminf_{n\to \infty} \| x_n - T(x_n) \|= 0$.
\item[{\em (iii)}]
$\lim_{n\to \infty} \| x_n - T(x_n) \|= 0$.
\end{enumerate} 
\end{lem}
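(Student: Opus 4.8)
Write $r_n := x_n - T(x_n)$, so that the recursion defining Algorithm \ref{algo:1} becomes $d_0 = -r_0$ and $d_{n+1} = -r_{n+1} + \beta_n^{\mathrm{DY}} d_n$ with $\beta_n^{\mathrm{DY}} = \|r_{n+1}\|^2 / \langle d_n, y_n\rangle$ and $y_n = r_{n+1} - r_n$; in this notation \eqref{Armijo} and \eqref{Wolfe} read $\|r_{n+1}\|^2 - \|r_n\|^2 \leq \delta \alpha_n \langle r_n, d_n\rangle$ and $\langle r_{n+1}, d_n\rangle \geq \sigma \langle r_n, d_n\rangle$. The plan is to mirror the argument of \cite[Theorem 3.3]{DY1999}, treating $r_n$ as a surrogate gradient, and to exploit Lemma \ref{Zou} throughout. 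I would prove the three items in the stated order, since (i) supplies the descent property that makes Lemma \ref{Zou} applicable, (ii) is a liminf result obtained from a Zoutendijk-type contradiction, and (iii) upgrades this to a full limit using the strict monotonicity furnished by \eqref{Armijo}.

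For (i), I would argue by induction. The base case is immediate from $d_0 = -r_0$, giving $\langle r_0, d_0\rangle = -\|r_0\|^2 < 0$. For the inductive step, substituting the formula for $d_{n+1}$ and then the definition of $\beta_n^{\mathrm{DY}}$ yields the key Dai--Yuan identity
\begin{align*}
\langle r_{n+1}, d_{n+1}\rangle = -\|r_{n+1}\|^2 + \beta_n^{\mathrm{DY}} \langle r_{n+1}, d_n\rangle = \frac{\|r_{n+1}\|^2 \langle r_n, d_n\rangle}{\langle d_n, y_n\rangle}.
\end{align*}
The curvature condition \eqref{Wolfe} together with the inductive hypothesis $\langle r_n, d_n\rangle < 0$ and $\sigma \in (0,1)$ gives $\langle r_{n+1}, d_n\rangle \geq \sigma \langle r_n, d_n\rangle > \langle r_n, d_n\rangle$, hence $\langle d_n, y_n\rangle = \langle r_{n+1}, d_n\rangle - \langle r_n, d_n\rangle > 0$. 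The displayed identity then has a positive factor $\|r_{n+1}\|^2 > 0$, a negative factor $\langle r_n, d_n\rangle$, and a positive denominator, so $\langle r_{n+1}, d_{n+1}\rangle < 0$, closing the induction. As a by-product, $\langle d_n, y_n\rangle > 0$ shows that the denominator of $\beta_n^{\mathrm{DY}}$ never vanishes, so the algorithm is well-defined.

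Part (ii) is where the real work lies. Writing $d_{n+1} + r_{n+1} = \beta_n^{\mathrm{DY}} d_n$, squaring, and dividing by $\langle r_{n+1}, d_{n+1}\rangle^2$, I would use the identity from (i) in the form $\langle r_{n+1}, d_{n+1}\rangle = \beta_n^{\mathrm{DY}} \langle r_n, d_n\rangle$ to cancel the $(\beta_n^{\mathrm{DY}})^2$ factor, and then complete the square in the remaining term to obtain the recursion
\begin{align*}
\frac{\|d_{n+1}\|^2}{\langle r_{n+1}, d_{n+1}\rangle^2} \leq \frac{\|d_n\|^2}{\langle r_n, d_n\rangle^2} + \frac{1}{\|r_{n+1}\|^2}.
\end{align*}
Since $\|d_0\|^2 / \langle r_0, d_0\rangle^2 = \|r_0\|^{-2}$, telescoping gives $\|d_n\|^2 / \langle r_n, d_n\rangle^2 \leq \sum_{k=0}^n \|r_k\|^{-2}$. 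If (ii) failed, there would exist $\gamma > 0$ with $\|r_n\| \geq \gamma$ for all $n$, whence $\langle r_n, d_n\rangle^2 / \|d_n\|^2 \geq \gamma^2/(n+1)$ and $\sum_n \langle r_n, d_n\rangle^2 / \|d_n\|^2 = \infty$, contradicting Lemma \ref{Zou}. The main obstacle is precisely the derivation of this recursion: keeping the completed-square estimate sharp enough that the excess term is bounded by $1/\|r_{n+1}\|^2$ (rather than by something that fails to telescope) is the delicate point, and it hinges on the exact cancellation provided by the Dai--Yuan identity.

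Finally, for (iii), I would note that (i) gives $\langle r_n, d_n\rangle < 0$, so \eqref{Armijo} with $\alpha_n > 0$ and $\delta > 0$ yields $\|r_{n+1}\|^2 \leq \|r_n\|^2 + \delta \alpha_n \langle r_n, d_n\rangle < \|r_n\|^2$; thus $(\|r_n\|^2)_{n\in\mathbb{N}}$ is strictly decreasing and bounded below by $0$, so $\lim_{n\to\infty}\|r_n\|$ exists. Combined with $\liminf_{n\to\infty}\|r_n\| = 0$ from (ii), this forces $\lim_{n\to\infty}\|r_n\| = 0$, which is the assertion of (iii).
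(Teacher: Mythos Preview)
Your proposal is correct and follows essentially the same approach as the paper's proof: the same inductive argument for (i) via the Dai--Yuan identity $\langle r_{n+1},d_{n+1}\rangle = \|r_{n+1}\|^2\langle r_n,d_n\rangle/\langle d_n,y_n\rangle$, the same squared-and-telescoped recursion $\|d_{n+1}\|^2/\langle r_{n+1},d_{n+1}\rangle^2 \le \|d_n\|^2/\langle r_n,d_n\rangle^2 + \|r_{n+1}\|^{-2}$ combined with Lemma~\ref{Zou} for (ii), and the same monotonicity argument from \eqref{Armijo} for (iii). The only minor point to make explicit is that ``$\|r_n\|\ge\gamma$ for all $n$'' follows from $\liminf_n\|r_n\|>0$ together with the standing hypothesis $\|r_n\|\neq 0$, which handles the finitely many early indices; the paper spells this out.
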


\begin{proof}
(i) 
From $d_0 := - (x_0 - T(x_0))$, 
$\langle x_0 - T(x_0), d_0 \rangle = - \|x_0 - T(x_0)  \|^2 < 0$.
Suppose that $\langle x_n - T(x_n), d_n \rangle < 0$ holds for some $n\in\mathbb{N}$.
Accordingly, the definition of $y_n:= (x_{n+1} - T(x_{n+1})) - (x_n - T(x_n))$ and \eqref{Wolfe} ensure that
\begin{align*}
\left\langle d_n, y_n \right\rangle
&=  \left\langle d_n, x_{n+1} - T \left(x_{n+1}\right) \right\rangle - \left\langle d_n, x_{n} - T\left(x_{n}\right) \right\rangle\\
&\geq \left( \sigma - 1  \right) \left\langle d_n, x_{n} - T\left(x_{n}\right) \right\rangle > 0,
\end{align*}
which implies that 
\begin{align*}
\beta_n^{\mathrm{DY}} := \frac{\left\| x_{n+1} - T \left(x_{n+1}\right) \right\|^2}{\left\langle d_n, y_n \right\rangle} > 0.
\end{align*}
From the definition of $d_{n+1} := - (x_{n+1} - T(x_{n+1})) + \beta_n^{\mathrm{DY}} d_n$,
we have
\begin{align*}
\left\langle d_{n+1}, x_{n+1} - T \left(x_{n+1}\right) \right\rangle
&= - \left\| x_{n+1} - T \left(x_{n+1}\right) \right\|^2 + \beta_n^{\mathrm{DY}} 
\left\langle d_n, x_{n+1} - T \left(x_{n+1}\right) \right\rangle\\
&= \left\| x_{n+1} - T \left(x_{n+1}\right) \right\|^2 
 \left\{ -1 + \frac{\left\langle d_n, x_{n+1} - T \left(x_{n+1}\right) \right\rangle}{\left\langle d_n, y_n \right\rangle} \right\}\\
&= \left\| x_{n+1} - T \left(x_{n+1}\right) \right\|^2
 \frac{\left\langle d_n, \left( x_{n+1} - T \left(x_{n+1}\right) \right) - y_n \right\rangle}{\left\langle d_n, y_n \right\rangle},
\end{align*}
which, together with the definitions of $y_n$ and $\beta_n^{\mathrm{DY}}$ $(>0)$, implies that
\begin{align}\label{DY1}
\begin{split} 
\left\langle d_{n+1}, x_{n+1} - T \left(x_{n+1}\right) \right\rangle 
&= \left\| x_{n+1} - T \left(x_{n+1}\right) \right\|^2
  \frac{\left\langle d_n, x_{n} - T \left(x_{n}\right) \right\rangle}{\left\langle d_n, y_n \right\rangle}\\
&=  \beta_n^{\mathrm{DY}} \left\langle d_n, x_{n} - T \left(x_{n}\right) \right\rangle < 0.
\end{split}
\end{align}
Induction shows that $\langle x_n - T(x_n), d_n \rangle < 0$ for all $n\in\mathbb{N}$.
This implies $\beta_n^{\mathrm{DY}} > 0$ $(n\in\mathbb{N})$; i.e., 
Algorithm \ref{algo:1} with $\beta_n = \beta_n^{\mathrm{DY}}$ is well-defined.

(ii)
Assume that $\liminf_{n\to\infty} \| x_n - T(x_n) \| > 0$.
Then, there exist $n_0 \in \mathbb{N}$ and $\varepsilon > 0$ such that 
$\|x_n - T(x_n)\| \geq \varepsilon$ for all $n \geq n_0$.
Since we have assumed that $\|x_n - T(x_n)\| \neq 0$ $(n\in\mathbb{N})$, we may further assume that 
$\|x_n - T(x_n)\| \geq \varepsilon$ for all $n \in \mathbb{N}$.
From the definition of $d_{n+1} := - (x_{n+1} - T(x_{n+1})) + \beta_n^{\mathrm{DY}} d_n$ $(n\in\mathbb{N})$,
we have, for all $n\in\mathbb{N}$, 
\begin{align*}
\beta_n^{\mathrm{DY}^2} \left\| d_n \right\|^2
&= \left\| d_{n+1} + \left(x_{n+1} - T \left(x_{n+1} \right) \right) \right\|^2\\
&= \left\| d_{n+1} \right\|^2 + 2 \left\langle d_{n+1}, x_{n+1} - T \left(x_{n+1} \right) \right\rangle 
   + \left\| x_{n+1} - T \left(x_{n+1} \right) \right\|^2.
\end{align*}
Lemma \ref{lem:DY}(i) and \eqref{DY1} mean that, for all $n\in\mathbb{N}$,
\begin{align*}
\beta_n^{\mathrm{DY}} = \frac{\left\langle d_{n+1}, x_{n+1} - T \left(x_{n+1}\right) \right\rangle}{\left\langle d_n, x_{n} - T \left(x_{n}\right) \right\rangle}.
\end{align*}
Hence, for all $n\in\mathbb{N}$,
\begin{align*}
&\quad \frac{\left\| d_{n+1} \right\|^2}{\left\langle d_{n+1}, x_{n+1} - T \left(x_{n+1} \right) \right\rangle^2}\\
&= - \frac{\left\| x_{n+1} - T \left(x_{n+1} \right) \right\|^2}{\left\langle d_{n+1}, x_{n+1} - T \left(x_{n+1} \right) \right\rangle^2}
 - \frac{2}{\left\langle d_{n+1}, x_{n+1} - T \left(x_{n+1} \right) \right\rangle} 
  + \frac{\left\| d_n \right\|^2}{\left\langle d_{n}, x_{n} - T \left(x_{n} \right) \right\rangle^2}\\
&= \frac{\left\| d_n \right\|^2}{\left\langle d_{n}, x_{n} - T \left(x_{n} \right) \right\rangle^2}
  + \frac{1}{\left\| x_{n+1} - T \left(x_{n+1} \right) \right\|^2}\\
&\quad - \left\{ \frac{1}{\left\| x_{n+1} - T \left(x_{n+1} \right) \right\|} +   
   \frac{\left\| x_{n+1} - T \left(x_{n+1} \right) \right\|}{\left\langle d_{n+1}, x_{n+1} - T \left(x_{n+1} \right) \right\rangle} \right\}^2\\
&\leq \frac{\left\| d_n \right\|^2}{\left\langle d_{n}, x_{n} - T \left(x_{n} \right) \right\rangle^2}
  + \frac{1}{\left\| x_{n+1} - T \left(x_{n+1} \right) \right\|^2}.     
\end{align*}
Summing up this inequality from $n=0$ to $n=N\in \mathbb{N}$ yields, for all $N\in \mathbb{N}$,
\begin{align*}
\frac{\left\| d_{N+1} \right\|^2}{\left\langle d_{N+1}, x_{N+1} - T \left(x_{N+1} \right) \right\rangle^2}
\leq  \frac{\left\| d_0 \right\|^2}{\left\langle d_{0}, x_{0} - T \left(x_{0} \right) \right\rangle^2}
  + \sum_{k=1}^{N+1} \frac{1}{\left\| x_{k} - T \left(x_{k} \right) \right\|^2},
\end{align*}
which, which together with $\|x_n - T(x_n)\| \geq \varepsilon$ ($n \in \mathbb{N}$) and $d_0 := -(x_0- T(x_0))$,  
implies that, for all $N\in\mathbb{N}$,
\begin{align*}
\frac{\left\| d_{N+1} \right\|^2}{\left\langle d_{N+1}, x_{N+1} - T \left(x_{N+1} \right) \right\rangle^2}
\leq \sum_{k=0}^{N+1} \frac{1}{\left\| x_k- T \left(x_k \right) \right\|^2}
\leq \frac{N+2}{\varepsilon^2}.
\end{align*}
Since Lemma \ref{lem:DY}(i) implies $\| d_n \| \neq 0$ $(n\in\mathbb{N})$, 
we have, for all $N \in \mathbb{N}$,
\begin{align*}
\frac{\left\langle d_{N+1}, x_{N+1} - T \left(x_{N+1} \right) \right\rangle^2}{\left\| d_{N+1} \right\|^2}
\geq \frac{\varepsilon^2}{N+2}.
\end{align*}
Therefore, Lemma \ref{Zou} guarantees that 
\begin{align*}
\infty > \sum_{k=1}^\infty \left( \frac{\left\langle d_{k}, x_{k} - T \left(x_{k} \right) \right\rangle}{\left\| d_{k} \right\|} \right)^2 
\geq \sum_{k=1}^\infty \frac{\varepsilon^2}{k+1} = \infty.
\end{align*}
This is a contradiction.
Hence, $\liminf_{n\to \infty} \|x_n - T(x_n)\| =0$.

(iii)
Condition \eqref{Armijo} and Lemma \ref{lem:DY}(i) lead to that, for all $n\in\mathbb{N}$,
\begin{align*}
\left\| x_{n+1} - T\left( x_{n+1}  \right) \right\|^2 - \left\| x_{n} - T\left( x_{n}  \right) \right\|^2
\leq \delta \alpha_n \left\langle x_n - T \left(x_n \right), d_n \right\rangle < 0. 
\end{align*}
Accordingly, $(\| x_n - T(x_n) \|)_{n\in\mathbb{N}}$ is monotone decreasing; i.e., 
there exists $\lim_{n\to \infty} \| x_n - T(x_n) \|$.
Lemma \ref{lem:DY}(ii) thus ensures that $\lim_{n\to\infty} \|x_n - T(x_n)\| = 0$.
This completes the proof.
\end{proof}

\subsection{Algorithm \ref{algo:1} with $\beta_n = \beta_n^{\mathrm{FR}}$}\label{subsec:FR}
To establish the convergence of Algorithm \ref{algo:1} when $\beta_n = \beta_n^{\mathrm{FR}}$, 
we assume that the step sizes $\alpha_n$ satisfy the {\em strong Wolfe-type conditions}, which are \eqref{Armijo} and 
the following strengthened version of \eqref{Wolfe}: for $\sigma \leq 1/2$,
\begin{align}\label{s_Wolfe}
\left| \left\langle x_n \left(\alpha_n \right) - T \left(x_n \left(\alpha_n \right)\right), d_n \right\rangle \right|
\leq - \sigma  \left\langle x_n - T \left(x_n \right), d_n \right\rangle.
\end{align}
See \cite{Al-Baali1985} on the global convergence of the FR method for unconstrained optimization
under the strong Wolfe conditions.

The following is a convergence analysis of Algorithm \ref{algo:1} with $\beta_n = \beta_n^{\mathrm{FR}}$.

\begin{thm}\label{thm:3}
Suppose that
$(x_n)_{n\in\mathbb{N}}$ is the sequence generated by Algorithm \ref{algo:1} with $\beta_n = \beta_n^{\mathrm{FR}}$ $(n\in\mathbb{N})$, where $(\alpha_n)_{n\in\mathbb{N}}$ satisfies \eqref{Armijo} and \eqref{s_Wolfe}. 
Then $(x_n)_{n\in\mathbb{N}}$ either terminates at a fixed point of $T$ or 
\begin{align*}
\lim_{n\to \infty} \left\|x_n - T \left(x_n \right) \right\| = 0.
\end{align*}
\end{thm}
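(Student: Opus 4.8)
The plan is to follow the same three-step pattern used for the DY case in Lemma~\ref{lem:DY}, replacing the DY-specific computation by the Fletcher--Reeves argument in the spirit of Al-Baali \cite{Al-Baali1985}. As in the earlier proofs, if $\|x_m-T(x_m)\|=0$ for some $m$ the claim is immediate, so I would assume $\|x_n-T(x_n)\|\neq 0$ for every $n$ and abbreviate $g_n:=x_n-T(x_n)$. I would then establish, in order: (i) that each $d_n$ is a descent direction, $\langle g_n,d_n\rangle<0$, which simultaneously makes the line search (hence the whole algorithm) well defined and lets Lemma~\ref{Zou} apply; (ii) that $\liminf_{n\to\infty}\|g_n\|=0$; and (iii) that the limit is in fact $0$. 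The strong Wolfe-type condition \eqref{s_Wolfe} enters precisely to drive the induction in step (i), which is why it replaces \eqref{Wolfe} here.

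For step (i) I would set $c_n:=\langle g_n,d_n\rangle/\|g_n\|^2$. From $d_{n+1}=-g_{n+1}+\beta_n^{\mathrm{FR}}d_n$ together with $\beta_n^{\mathrm{FR}}=\|g_{n+1}\|^2/\|g_n\|^2$ one obtains the recursion
\[
c_{n+1}=-1+\frac{\langle g_{n+1},d_n\rangle}{\|g_n\|^2}.
\]
Condition \eqref{s_Wolfe} bounds the second term, $|\langle g_{n+1},d_n\rangle|/\|g_n\|^2\le-\sigma c_n$, so $c_{n+1}\in[-1+\sigma c_n,\,-1-\sigma c_n]$. Starting from $c_0=-1$ (since $d_0=-g_0$), a direct induction then gives
\[
-\sum_{j=0}^{n}\sigma^j\le c_n\le -2+\sum_{j=0}^{n}\sigma^j
\]
for all $n$. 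Because $\sum_{j=0}^{n}\sigma^j<1/(1-\sigma)\le 2$ when $\sigma\le 1/2$, this yields $c_n<0$ (descent, hence well-definedness and applicability of Lemma~\ref{Zou}) and the upper estimate $-c_n\le 1/(1-\sigma)$; for $\sigma<1/2$ it also gives the sufficient-descent bound $-c_n\ge(1-2\sigma)/(1-\sigma)>0$.

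For step (ii) I would argue by contradiction, assuming $\liminf_n\|g_n\|\ge\gamma>0$, so that $\|g_n\|\ge\gamma$ for all $n$, and control the growth of $\|d_n\|$. Expanding $\|d_{n+1}\|^2=\|g_{n+1}\|^2-2\beta_n^{\mathrm{FR}}\langle g_{n+1},d_n\rangle+(\beta_n^{\mathrm{FR}})^2\|d_n\|^2$ and rewriting the cross term through $\langle g_{n+1},d_{n+1}\rangle$ gives $\|d_{n+1}\|^2\le C\|g_{n+1}\|^2+(\beta_n^{\mathrm{FR}})^2\|d_n\|^2$ with $C=(1+\sigma)/(1-\sigma)$ (using $-c_{n+1}\le 1/(1-\sigma)$). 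Dividing by $\|g_{n+1}\|^4$, using $(\beta_n^{\mathrm{FR}})^2\|d_n\|^2/\|g_{n+1}\|^4=\|d_n\|^2/\|g_n\|^4$, and telescoping from $d_0=-g_0$ yields
\[
\frac{\|d_n\|^2}{\|g_n\|^4}\le C\sum_{j=0}^{n}\frac{1}{\|g_j\|^2}\le\frac{C(n+1)}{\gamma^2}.
\]
Combining this with $\langle g_n,d_n\rangle^2=c_n^2\|g_n\|^4\ge\big((1-2\sigma)/(1-\sigma)\big)^2\|g_n\|^4$ gives $\langle g_n,d_n\rangle^2/\|d_n\|^2\ge\mathrm{const}/(n+1)$, so $\sum_n\langle g_n,d_n\rangle^2/\|d_n\|^2=\infty$, contradicting Lemma~\ref{Zou}. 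Hence $\liminf_n\|g_n\|=0$.

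Finally, for step (iii) I would note that, each $d_n$ being a descent direction and $\alpha_n>0$, condition \eqref{Armijo} gives $\|g_{n+1}\|^2-\|g_n\|^2\le\delta\alpha_n\langle g_n,d_n\rangle<0$; thus $(\|g_n\|)_{n}$ is monotone decreasing and $\lim_n\|g_n\|$ exists, which together with $\liminf_n\|g_n\|=0$ forces $\lim_n\|g_n\|=0$. I expect the main obstacle to be the combination of steps (i) and (ii): getting the two-sided Al-Baali bounds on $c_n$ exactly right (this is where \eqref{s_Wolfe} and the FR identity $\beta_n^{\mathrm{FR}}=\|g_{n+1}\|^2/\|g_n\|^2$ are indispensable) and carrying out the telescoping estimate on $\|d_n\|^2/\|g_n\|^4$ cleanly. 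By contrast, the upgrade from $\liminf$ to $\lim$ through \eqref{Armijo} is routine, and is exactly the extra monotonicity that the fixed-point formulation supplies over the classical nonconvex FR analysis.
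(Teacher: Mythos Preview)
Your proposal is correct and follows essentially the same approach as the paper's proof (Lemma~\ref{lem:FR}): the same three-step structure, the same Al-Baali two-sided bounds on $c_n=\langle g_n,d_n\rangle/\|g_n\|^2$ via induction from \eqref{s_Wolfe}, the same telescoping estimate on $\|d_n\|^2/\|g_n\|^4$ leading to a contradiction with Lemma~\ref{Zou}, and the same monotonicity argument via \eqref{Armijo} to pass from $\liminf$ to $\lim$. The only cosmetic difference is that in step (ii) you bound the cross term $-2\beta_n^{\mathrm{FR}}\langle g_{n+1},d_n\rangle$ by rewriting it through $\langle g_{n+1},d_{n+1}\rangle$ and using $-c_{n+1}\le 1/(1-\sigma)$, whereas the paper bounds $\beta_n^{\mathrm{FR}}|\langle g_{n+1},d_n\rangle|$ directly via \eqref{s_Wolfe} and \eqref{inequality:1}; both routes yield the identical constant $C=(1+\sigma)/(1-\sigma)$.
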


\subsubsection{Proof of Theorem \ref{thm:3}}
It can be assumed that, for all $n\in\mathbb{N}$, $\| x_n - T (x_n) \| \neq 0$ holds.
Theorem \ref{thm:3} can be proven by using the ideas in the proof of \cite[Theorem 2]{Al-Baali1985}.

\begin{lem}\label{lem:FR}
Suppose that the assumptions in Theorem \ref{thm:3} are satisfied.
Then,
\begin{enumerate}
\item[{\em (i)}]
$\langle x_n - T(x_n), d_n \rangle < 0$ $(n\in\mathbb{N})$.
\item[{\em (ii)}]
$\liminf_{n\to \infty} \| x_n - T(x_n) \|= 0$.
\item[{\em (iii)}]
$\lim_{n\to \infty} \| x_n - T(x_n) \|= 0$.
\end{enumerate} 
\end{lem}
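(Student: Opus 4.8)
The plan is to follow the classical Al-Baali argument for the Fletcher--Reeves method, adapting each step to the fixed point setting by replacing the gradient $\nabla f(x_n)$ with the residual $g_n := x_n - T(x_n)$, and to organize the proof exactly as in Lemma \ref{lem:DY}: first establish the descent property (i), then prove $\liminf_{n} \|g_n\| = 0$ in (ii) by contradiction, and finally upgrade this to $\lim_n \|g_n\| = 0$ in (iii) via monotonicity. Throughout, the recursion $d_{n+1} = -g_{n+1} + \beta_n^{\mathrm{FR}} d_n$ with $\beta_n^{\mathrm{FR}} = \|g_{n+1}\|^2/\|g_n\|^2$ and the strong Wolfe-type condition \eqref{s_Wolfe}, namely $|\langle g_{n+1}, d_n\rangle| \leq -\sigma\langle g_n, d_n\rangle$, are the two ingredients that drive everything.

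For (i), I would prove by induction the two-sided quantitative estimate
\[
-\sum_{j=0}^{n}\sigma^j \;\leq\; \frac{\langle g_n, d_n\rangle}{\|g_n\|^2} \;\leq\; -2 + \sum_{j=0}^{n}\sigma^j .
\]
The base case is immediate from $d_0 = -g_0$, which gives the ratio $-1$. For the inductive step, dividing the identity $\langle g_{n+1}, d_{n+1}\rangle = -\|g_{n+1}\|^2 + \beta_n^{\mathrm{FR}}\langle g_{n+1}, d_n\rangle$ by $\|g_{n+1}\|^2$ and using $\beta_n^{\mathrm{FR}} = \|g_{n+1}\|^2/\|g_n\|^2$ collapses the expression to $-1 + \langle g_{n+1}, d_n\rangle/\|g_n\|^2$; then \eqref{s_Wolfe} bounds the last term between $\sigma\langle g_n, d_n\rangle/\|g_n\|^2$ and $-\sigma\langle g_n, d_n\rangle/\|g_n\|^2$, and feeding in the inductive hypothesis closes the recursion. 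Since $\sigma \leq 1/2$ forces $\sum_{j=0}^n\sigma^j \leq 1/(1-\sigma) \le 2$, the upper bound is strictly negative, which yields $\langle g_n, d_n\rangle < 0$ and simultaneously the uniform descent estimate $\langle g_n, d_n\rangle \le -c\,\|g_n\|^2$ with $c = (1-2\sigma)/(1-\sigma) > 0$ (for $\sigma < 1/2$), together with the lower bound $\langle g_n, d_n\rangle \ge -\|g_n\|^2/(1-\sigma)$.

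Part (ii) I would argue by contradiction: assuming $\|g_n\| \ge \gamma > 0$ for all $n$, I will derive that $\sum_n \|d_n\|^{-2}$ both converges and diverges. Convergence comes from Lemma \ref{Zou} together with the descent bound from (i), since $\sum_n \langle g_n, d_n\rangle^2/\|d_n\|^2 < \infty$ and $\langle g_n, d_n\rangle^2 \ge c^2\gamma^4$ give $\sum_n \|d_n\|^{-2} < \infty$. Divergence comes from a growth estimate on $\|d_n\|$. Expanding $\|d_{n+1}\|^2 = \|g_{n+1}\|^2 - 2\beta_n^{\mathrm{FR}}\langle g_{n+1}, d_n\rangle + (\beta_n^{\mathrm{FR}})^2\|d_n\|^2$, bounding the cross term via \eqref{s_Wolfe} and the lower bound on $\langle g_n, d_n\rangle$ from (i), and dividing by $\|g_{n+1}\|^4$ telescopes the FR ratios into $\|d_n\|^2/\|g_n\|^4 \le K\sum_{j=0}^n \|g_j\|^{-2}$ with $K = (1+\sigma)/(1-\sigma)$. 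Here I will also use that \eqref{Armijo} and (i) make $(\|g_n\|)_n$ monotone decreasing, so $\|g_n\| \le \|g_0\|$; combined with $\|g_j\| \ge \gamma$ this bounds $\|d_n\|^2$ by a constant times $n+1$, whence $\sum_n \|d_n\|^{-2} = \infty$, the desired contradiction. Finally, (iii) is immediate: \eqref{Armijo} and (i) show $(\|g_n\|)_n$ is monotone decreasing, hence convergent, and (ii) forces its limit to be $0$, which is exactly the closing argument of Lemma \ref{lem:DY}(iii).

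The main obstacle I anticipate is the bookkeeping in the $\|d_n\|^2$ recursion in (ii): one must use both sides of the strong Wolfe condition \eqref{s_Wolfe} and the two-sided bound from (i) (not merely descent) so that the cross term $-2\beta_n^{\mathrm{FR}}\langle g_{n+1}, d_n\rangle$ is absorbed into a multiple of $\|g_{n+1}\|^2$ and the FR ratios telescope cleanly. The reliance on $\sigma \le 1/2$ is essential and enters twice, in (i) to guarantee strict descent and in the constant $K$ of (ii), which is precisely why the strong Wolfe-type condition \eqref{s_Wolfe}, rather than the ordinary condition \eqref{Wolfe}, must be imposed for the FR variant.
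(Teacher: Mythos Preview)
Your proposal is correct and follows essentially the same Al-Baali argument as the paper: the two-sided inductive bound in (i), the $\|d_n\|^2$ recursion in (ii) combined with Lemma \ref{Zou}, and the monotonicity argument in (iii) all match. The only cosmetic difference is that in (ii) you phrase the contradiction as $\sum_n \|d_n\|^{-2}$ being both finite and infinite (invoking the upper bound $\|g_n\|\le\|g_0\|$ for the divergence side), whereas the paper works directly with $\sum_n \|g_n\|^4/\|d_n\|^2$ and needs only the lower bound $\|g_n\|\ge\varepsilon$; the two are equivalent once $\|g_n\|$ is trapped in $[\varepsilon,\|g_0\|]$.
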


\begin{proof}
(i) 
Let us show that, for all $n\in\mathbb{N}$,
\begin{align}\label{inequality:1}
- \sum_{j=0}^n \sigma^j \leq \frac{\left\langle x_n - T\left(x_n \right), d_n \right\rangle}{\left\| x_n - T\left(x_n \right)
\right\|^2} \leq -2 + \sum_{j=0}^n \sigma^j. 
\end{align}
From $d_0 := - (x_0 - T(x_0))$, \eqref{inequality:1} holds for $n:= 0$ and $\langle x_0 - T(x_0), d_0 \rangle < 0$.
Suppose that \eqref{inequality:1} holds for some $n\in \mathbb{N}$.
Accordingly, from $\sum_{j=0}^n \sigma^j < \sum_{j=0}^\infty \sigma^j = 1/(1-\sigma)$ and $\sigma \in (0,1/2]$,
we have
\begin{align*} 
\frac{\left\langle x_n - T\left(x_n \right), d_n \right\rangle}{\left\| x_n - T\left(x_n \right)
\right\|^2} < -2 + \sum_{j=0}^\infty \sigma^j = \frac{- \left( 1 - 2 \sigma \right)}{1-\sigma} \leq 0,
\end{align*}
which implies that $\langle x_n - T (x_n ), d_n \rangle < 0$.
The definitions of $d_{n+1}$ and $\beta_n^{\mathrm{FR}}$ enable us to deduce that 
\begin{align*}
\frac{\left\langle x_{n+1} - T\left(x_{n+1} \right), d_{n+1} \right\rangle}{\left\| x_{n+1} - T \left( x_{n+1} \right)  \right\|^2}
&= \frac{\left\langle x_{n+1} - T\left(x_{n+1} \right), - \left( x_{n+1} - T\left(x_{n+1} \right) \right) 
    + \beta_n^{\mathrm{FR}} d_n \right\rangle}{\left\| x_{n+1} - T \left( x_{n+1} \right)  \right\|^2}\\
&= -1 + \frac{\left\| x_{n+1} - T \left( x_{n+1} \right)  \right\|^2}{\left\| x_{n} - T \left( x_{n} \right)  \right\|^2}  
   \frac{\left\langle x_{n+1} - T\left(x_{n+1} \right), d_n \right\rangle}{\left\| x_{n+1} - T \left( x_{n+1} \right)  \right\|^2}\\
&= -1 + \frac{\left\langle x_{n+1} - T\left(x_{n+1} \right), d_n \right\rangle}{\left\| x_{n} - T \left( x_{n} \right)  \right\|^2}.   
\end{align*}
Since \eqref{s_Wolfe} satisfies 
$\sigma \langle x_n - T(x_n),d_n \rangle \leq \langle x_{n+1} - T(x_{n+1}),d_n \rangle \leq - \sigma \langle x_n - T(x_n),d_n \rangle$ and \eqref{inequality:1} holds for some $n$, it is found that 
\begin{align*}
-1 + \frac{\left\langle x_{n+1} - T\left(x_{n+1} \right), d_n \right\rangle}{\left\| x_{n} - T \left( x_{n} \right)  \right\|^2}
&\geq -1 + \sigma \frac{\left\langle x_{n} - T\left(x_{n} \right), d_n \right\rangle}{\left\| x_{n} - T \left( x_{n} \right)  \right\|^2}\\
&\geq -1 - \sigma \sum_{j=0}^n \sigma^j
= - \sum_{j=0}^{n+1} \sigma^j
\end{align*}
and
\begin{align*}
-1 + \frac{\left\langle x_{n+1} - T\left(x_{n+1} \right), d_n \right\rangle}{\left\| x_{n} - T \left( x_{n} \right)  \right\|^2}
&\leq -1 - \sigma \frac{\left\langle x_{n} - T\left(x_{n} \right), d_n \right\rangle}{\left\| x_{n} - T \left( x_{n} \right)  \right\|^2}\\
&\leq -1 + \sigma \sum_{j=0}^n \sigma^j
= -2 + \sum_{j=0}^{n+1} \sigma^j.
\end{align*}
Hence, 
\begin{align*}
- \sum_{j=0}^{n+1} \sigma^j \leq \frac{\left\langle x_{n+1} - T\left(x_{n+1} \right), d_{n+1} \right\rangle}
{\left\| x_{n+1} - T\left(x_{n+1} \right)
\right\|^2} \leq -2 + \sum_{j=0}^{n+1} \sigma^j.
\end{align*} 
A discussion similar to the one for obtaining $\langle x_n- T(x_n), d_n \rangle < 0$ guarantees that 
$\langle x_{n+1} - T(x_{n+1}), d_{n+1} \rangle < 0$ holds.
Induction thus shows that \eqref{inequality:1} and $\langle x_n- T(x_n), d_n \rangle < 0$ hold for all $n\in\mathbb{N}$.

(ii)
Assume that $\liminf_{n\to\infty} \| x_n - T(x_n) \| > 0$.
A discussion similar to the one in the proof of Lemma \ref{lem:DY}(ii) ensures the existence of $\varepsilon > 0$
such that  
$\|x_n - T(x_n)\| \geq \varepsilon$ for all $n \in \mathbb{N}$.
From \eqref{s_Wolfe} and \eqref{inequality:1}, we have for all $n\in\mathbb{N}$,
\begin{align*}
\left| \left\langle x_{n+1} - T \left( x_{n+1} \right), d_n \right\rangle \right|
< - \sigma \left\langle x_{n} - T \left( x_{n} \right), d_n \right\rangle
\leq \sum_{j=1}^{n+1} \sigma^{j} \left\| x_{n} - T\left(x_{n} \right) \right\|^2,
\end{align*}
which, together with $\sum_{j=1}^{n+1} \sigma^{j} <  \sum_{j=1}^{\infty} \sigma^{j} = \sigma/(1 - \sigma)$
and $\beta_n^{\mathrm{FR}} := \| x_{n+1} - T ( x_{n+1} ) \|^2/\| x_n - T(x_n) \|^2$ $(n\in\mathbb{N})$,
implies that, for all $n\in\mathbb{N}$,
\begin{align*}
\beta_n^{\mathrm{FR}} \left| \left\langle x_{n+1} - T \left( x_{n+1} \right), d_n \right\rangle \right| 
< \frac{\sigma}{1 - \sigma} \left\| x_{n+1} - T\left(x_{n+1} \right) \right\|^2.
\end{align*}
Accordingly, from the definition of $d_{n+1} := - (x_{n+1} - T(x_{n+1})) + \beta_n^{\mathrm{FR}} d_n$,
we find that,
for all $n\in\mathbb{N}$, 
\begin{align*}
\left\| d_{n+1} \right\|^2
&= \left\| \beta_n^{\mathrm{FR}} d_n  - \left(x_{n+1} - T \left(x_{n+1} \right) \right) \right\|^2\\
&= \beta_n^{\mathrm{FR}^2}  \left\| d_n \right\|^2 
   - 2 \beta_n^{\mathrm{FR}} \left\langle d_n, x_{n+1} - T \left(x_{n+1} \right)\right\rangle + 
   \left\| x_{n+1} - T \left(x_{n+1} \right) \right\|^2\\
&\leq \frac{\left\| x_{n+1} - T\left(x_{n+1} \right)\right\|^4}{\left\| x_{n} - T\left(x_{n} \right)\right\|^4}  \left\| d_n \right\|^2
    + \left( \frac{2\sigma}{1 - \sigma} + 1 \right)  \left\| x_{n+1} - T \left(x_{n+1} \right) \right\|^2,  
\end{align*}
which means that, for all $n\in\mathbb{N}$,
\begin{align*}
\frac{\left\| d_{n+1} \right\|^2}{\left\| x_{n+1} - T\left(x_{n+1} \right)\right\|^4}
\leq \frac{\left\| d_{n} \right\|^2}{\left\| x_{n} - T\left(x_{n} \right)\right\|^4} 
    + \frac{1+\sigma}{1-\sigma} \frac{1}{\left\| x_{n+1} - T \left(x_{n+1} \right) \right\|^2}.
\end{align*}
The sum of this inequality from $n=0$ to $n=N \in\mathbb{N}$ and $d_0 := - (x_0 - T(x_0))$ ensure that,
for all $N\in \mathbb{N}$,
\begin{align*}
\frac{\left\| d_{N+1} \right\|^2}{\left\| x_{N+1} - T\left(x_{N+1} \right)\right\|^4}
\leq \frac{1}{\left\| x_{0} - T \left(x_{0} \right) \right\|^2} 
      + \frac{1+\sigma}{1-\sigma} \sum_{k=1}^{N+1} \frac{1}{\left\| x_{k} - T \left(x_{k} \right) \right\|^2}.
\end{align*}
From $\|x_n - T(x_n)\| \geq \varepsilon$ ($n \in \mathbb{N}$), for all $N\in\mathbb{N}$,
\begin{align*}
\frac{\left\| d_{N+1} \right\|^2}{\left\| x_{N+1} - T\left(x_{N+1} \right)\right\|^4}
\leq \frac{1}{\varepsilon^2} + \frac{1+\sigma}{1-\sigma}\frac{N+1}{\varepsilon^2}
= \frac{\left( 1 + \sigma \right) N + 2}{\varepsilon^2 \left( 1 - \sigma  \right)}.
\end{align*}
Therefore, from Lemma \ref{lem:FR}(i) guaranteeing that $\|d_n\|\neq 0$ $(n\in\mathbb{N})$ and $\sum_{k=1}^\infty \varepsilon^2 ( 1 - \sigma)/( ( 1 + \sigma) (k -1) + 2) = \infty$,
it is found that
\begin{align*}
\sum_{k=1}^\infty \frac{\left\| x_{k} - T\left(x_{k} \right)\right\|^4}{\left\| d_{k} \right\|^2} = \infty.
\end{align*}
Meanwhile, since \eqref{inequality:1} guarantees that $\langle x_n - T(x_n), d_n \rangle 
\leq (-2 + \sum_{j=0}^n \sigma^j ) \| x_n - T(x_n) \|^2 < (-(1-2 \sigma)/(1-\sigma)) \| x_n - T(x_n) \|^2$ $(n\in\mathbb{N})$,
Lemma \ref{Zou} and Lemma \ref{lem:FR}(i) lead to the deduction that 
\begin{align*}
\infty > \sum_{k=0}^\infty \left( \frac{\left\langle x_k - T\left(x_k\right), d_k   \right\rangle}
{\left\|d_k \right\|} \right)^2
\geq \left( \frac{1- 2 \sigma}{1-\sigma} \right)^2 \sum_{k=0}^\infty \frac{\left\| x_{k} - T\left(x_{k} \right)\right\|^4}{\left\|d_k \right\|^2} = \infty,
\end{align*}
which is a contradiction.
Therefore,  $\liminf_{n\to\infty} \| x_n - T(x_n) \| = 0$.

(iii)
A discussion similar to the one in the proof of Lemma \ref{lem:DY}(iii) leads to Lemma \ref{lem:FR}(iii).
This completes the proof.
\end{proof}

\subsection{Algorithm \ref{algo:1} with $\beta_n = \beta_n^{\mathrm{PRP}+}$}\label{subsec:PRP}
It is  well known that the convergence of the nonlinear conjugate gradient method with $\beta_n^{\mathrm{PRP}}$ defined as in \eqref{beta} for a general nonlinear function is uncertain \cite[Section 5]{hager2006}. To guarantee the convergence of the PRP method for unconstrained optimization, the following modification of $\beta_n^{\mathrm{PRP}}$ was presented in \cite{powell1984}: for $\beta_n^{\mathrm{PRP}}$ defined as in \eqref{beta}, $\beta_n^{\mathrm{PRP+}} := \max \{ \beta_n^{\mathrm{PRP}}, 0 \}$. On the basis of the idea behind this modification, this subsection considers Algorithm \ref{algo:1} with 
$\beta_n^{\mathrm{PRP}+}$ defined as in \eqref{Beta}.

\begin{thm}\label{thm:4}
Suppose that $(x_n)_{n\in\mathbb{N}}$ and $(d_n)_{n\in\mathbb{N}}$ are the sequences generated by Algorithm \ref{algo:1} 
with $\beta_n = \beta_n^{\mathrm{PRP+}}$ $(n\in\mathbb{N})$ and there exists $c > 0$ such that $\langle x_n - T(x_n), d_n \rangle \leq -c \| x_n - T(x_n) \|^2$ for all $n\in \mathbb{N}$.
If $(x_n)_{n\in\mathbb{N}}$ is bounded,
then $(x_n)_{n\in\mathbb{N}}$ either terminates at a fixed point of $T$ or 
\begin{align*}
\lim_{n\to \infty} \left\|x_n - T \left(x_n \right) \right\| = 0.
\end{align*}
\end{thm}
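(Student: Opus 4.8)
The plan is to adapt the Gilbert--Nocedal convergence analysis of the PRP${}^+$ method \cite{gilbert1992} to the present fixed point setting, using nonexpansivity of $T$ in place of Lipschitz continuity of a gradient. Throughout I would write $g_n := x_n - T(x_n)$ and $s_n := x_{n+1} - x_n = \alpha_n d_n$. As in the earlier proofs, if $\|g_m\| = 0$ for some $m$ we are done, so assume $\|g_n\| \neq 0$ for all $n$; it then suffices to prove $\liminf_{n\to\infty}\|g_n\| = 0$, because \eqref{Armijo} and the hypothesis $\langle g_n, d_n\rangle \le -c\|g_n\|^2 < 0$ give $\|g_{n+1}\|^2 - \|g_n\|^2 \le \delta \alpha_n \langle g_n, d_n\rangle < 0$, so $(\|g_n\|)_{n\in\mathbb{N}}$ is monotone decreasing and its limit inferior equals its limit. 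I argue by contradiction: suppose $\liminf_n \|g_n\| > 0$, so that $\|g_n\| \ge \gamma$ for some $\gamma > 0$ and all $n$. Since $(x_n)_{n\in\mathbb{N}}$ is bounded and $T$ is continuous, $(g_n)_{n\in\mathbb{N}}$ is bounded, say $\|g_n\| \le \bar\gamma$, and the one-step displacements satisfy $\|s_n\| \le R$ for a constant $R$. The descent hypothesis yields $\langle g_n, d_n\rangle < 0$ (in particular $d_n \neq 0$), so Lemma \ref{Zou} applies; combining it with $\langle g_n, d_n\rangle^2 \ge c^2\|g_n\|^4$ and $\|g_n\| \ge \gamma$ gives
\[
\sum_{n=0}^\infty \frac{1}{\|d_n\|^2} \le \frac{1}{c^2\gamma^4}\sum_{n=0}^\infty\left(\frac{\langle g_n, d_n\rangle}{\|d_n\|}\right)^2 < \infty,
\]
so in particular $\|d_n\| \to \infty$.

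Next I would establish two estimates. First, with $u_n := d_n/\|d_n\|$ (well defined since $d_n \neq 0$), the recursion $d_{n+1} = -g_{n+1} + \beta_n^{\mathrm{PRP}+} d_n$ gives $u_{n+1} = r_{n+1} + \delta_{n+1} u_n$, where $r_{n+1} := -g_{n+1}/\|d_{n+1}\|$ and $\delta_{n+1} := \beta_n^{\mathrm{PRP}+}\|d_n\|/\|d_{n+1}\| \ge 0$; since $\|u_n\| = \|u_{n+1}\| = 1$, the reverse triangle inequality yields $|\delta_{n+1} - 1| \le \|r_{n+1}\|$ and hence the key bound $\|u_{n+1} - u_n\| \le 2\|r_{n+1}\| = 2\|g_{n+1}\|/\|d_{n+1}\|$. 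With $\|g_{n+1}\| \le \bar\gamma$ and the previous display this gives $\sum_n \|u_{n+1}-u_n\|^2 < \infty$, i.e.\ the normalized directions turn slowly. Second, I would verify a ``Property $(\ast)$'' bound: from $y_n = s_n - (T(x_{n+1}) - T(x_n))$ and nonexpansivity (exactly the estimate used in the proof of Lemma \ref{Zou}) one has $\|y_n\| \le 2\|s_n\|$, so
\[
0 \le \beta_n^{\mathrm{PRP}+} \le \frac{\|g_{n+1}\|\,\|y_n\|}{\|g_n\|^2} \le \frac{2\bar\gamma}{\gamma^2}\|s_n\|.
\]
Hence $\beta_n^{\mathrm{PRP}+} \le b := \max\{1, 2\bar\gamma R/\gamma^2\}$ for all $n$, and $\beta_n^{\mathrm{PRP}+} \le 1/(2b)$ whenever $\|s_n\| \le \mu := \gamma^2/(4b\bar\gamma)$.

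The final and most delicate step is the counting argument, and the subtlety is the order in which the constants are fixed. Because $\sum_n \|u_{n+1}-u_n\|^2 < \infty$, I can first compute $N_0 := 2R/\mu$, then choose a window length $\Delta \in \mathbb{N}$ so large that $b^{N_0}(2b)^{-(\Delta - N_0)} \le 1/2$ (possible since $N_0$ does not depend on $\Delta$), and only then pick an index $k_0$ with $\sum_{i \ge k_0}\|u_{i+1}-u_i\|^2 \le 1/(4\Delta)$. Cauchy--Schwarz then gives $\|u_i - u_k\| \le 1/2$ for $k_0 \le k \le i \le k + \Delta$, whence $\|x_{k+\Delta} - x_k\| = \bigl\| \sum_{i=k}^{k+\Delta-1} \|s_i\| u_i \bigr\| \ge \tfrac{1}{2}\sum_{i=k}^{k+\Delta-1}\|s_i\|$; boundedness of $(x_n)_{n\in\mathbb{N}}$ forces $\sum_{i=k}^{k+\Delta-1}\|s_i\| \le 2R$, so each window of length $\Delta$ contains at most $N_0$ indices with $\|s_i\| > \mu$. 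Unrolling $\|d_{i+1}\| \le \bar\gamma + \beta_i^{\mathrm{PRP}+}\|d_i\|$ across a window and using Property $(\ast)$ (at most $N_0$ factors equal to $b$, the rest at most $1/(2b) < 1$) yields $\|d_{k+\Delta}\| \le C + \tfrac12\|d_k\|$ for a constant $C := \bar\gamma \Delta b^{N_0}$. Iterating over consecutive windows bounds $(\|d_{k_0 + m\Delta}\|)_m$, and since $\|d_i\|$ can grow by at most the fixed factor $b^\Delta$ inside any window, $(\|d_n\|)_{n\in\mathbb{N}}$ is bounded. This contradicts $\|d_n\| \to \infty$, so $\liminf_n \|g_n\| = 0$, and with the monotonicity noted in the first paragraph the limit is $0$, which proves the theorem.

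I expect the counting argument of the third paragraph to be the main obstacle: the descent inequality is handed to us as a hypothesis (which is precisely what makes PRP${}^+$ hard in general), but extracting the contradiction still requires the delicate interplay between the slow turning of $u_n$, the displacement bound from boundedness, and Property $(\ast)$, with all constants chosen in the correct dependency order $\gamma,\bar\gamma,R \mapsto b,\mu \mapsto N_0 \mapsto \Delta \mapsto k_0$.
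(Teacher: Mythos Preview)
Your proposal is correct and follows the same Gilbert--Nocedal strategy the paper uses: Zoutendijk-type summability (Lemma~\ref{Zou}) combined with the descent hypothesis to get $\sum_n 1/\|d_n\|^2 < \infty$ and the slow-turning estimate $\sum_n\|u_{n+1}-u_n\|^2 < \infty$ (the paper's Lemma~\ref{Gil_1}), verification of Property~($\star$) for $\beta_n^{\mathrm{PRP}+}$ via $\|y_n\|\le 2\|s_n\|$, and a window-counting argument to reach a contradiction.

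The one organizational difference is how the contradiction is extracted. The paper separates out Lemma~\ref{Gil_2} (proof deferred to \cite[Lemma~4.2]{gilbert1992}), which asserts that \emph{some} late window contains \emph{many} long steps, and then in Lemma~\ref{Gil_3} shows this is incompatible with boundedness plus slow turning. You run the argument in the contrapositive direction: boundedness plus slow turning force \emph{every} late window to contain at most $N_0$ long steps, and then unrolling $\|d_{i+1}\|\le\bar\gamma+\beta_i\|d_i\|$ with Property~($\star$) keeps $\|d_n\|$ bounded, contradicting $\|d_n\|\to\infty$. These are dual formulations of the same idea; your version has the advantage of being fully self-contained rather than citing out the key combinatorial lemma. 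One small point of bookkeeping: make sure the constant $R$ you use for $\sum_{i=k}^{k+\Delta-1}\|s_i\|\le 2R$ is a diameter bound on $(x_n)$ (so that $\|x_{k+\Delta}-x_k\|\le R$), not merely a bound on the single-step $\|s_n\|$; the argument is fine once this is stated cleanly.
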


\subsubsection{Proof of Theorem \ref{thm:4}}
It can be assumed that $\| x_n - T (x_n) \| \neq 0$ holds for all $n\in\mathbb{N}$.
Let us first show the following lemma by referring to the proof of \cite[Lemma 4.1]{gilbert1992}. 

\begin{lem}\label{Gil_1}
Let $(x_n)_{n\in \mathbb{N}}$ and $(d_n)_{n\in \mathbb{N}}$ be the sequences generated by Algorithm \ref{algo:1} with $\beta_n \geq 0$
$(n\in\mathbb{N})$ and assume that there exists $c > 0$ such that 
$\langle x_n - T(x_n), d_n \rangle \leq - c \|x_n - T(x_n)\|^2$ for all $n\in\mathbb{N}$.
If there exists $\varepsilon > 0$ such that $\|x_n - T(x_n)\| \geq \varepsilon$ for all $n\in \mathbb{N}$,
then
$\sum_{n=0}^\infty \| u_{n+1} - u_n \|^2< \infty$, 
where $u_n := d_n/\|d_n\|$ $(n\in\mathbb{N})$.
\end{lem}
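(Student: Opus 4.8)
The plan is to follow the Gilbert--Nocedal strategy: rewrite the direction recursion in terms of the normalized directions $u_n = d_n/\|d_n\|$, extract a geometric bound on $\|u_{n+1}-u_n\|$ that exploits the sign condition $\beta_n \ge 0$, and then turn the Zoutendijk-type estimate of Lemma \ref{Zou} into the desired summability.

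First I would record that the hypotheses already guarantee $\langle x_n - T(x_n), d_n\rangle \le -c\|x_n - T(x_n)\|^2 \le -c\varepsilon^2 < 0$, so in particular $\|d_n\| \neq 0$ for every $n$ and Lemma \ref{Zou} is applicable. Dividing the update $d_{n+1} = -(x_{n+1} - T(x_{n+1})) + \beta_n d_n$ by $\|d_{n+1}\|$ and setting
$$ r_{n+1} := \frac{-(x_{n+1} - T(x_{n+1}))}{\|d_{n+1}\|}, \qquad \delta_{n+1} := \frac{\beta_n \|d_n\|}{\|d_{n+1}\|} \ge 0, $$
one obtains the normalized recursion $u_{n+1} = r_{n+1} + \delta_{n+1} u_n$, where $\delta_{n+1}\ge 0$ precisely because $\beta_n \ge 0$.

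The step I expect to be the crux is the geometric inequality $\|u_{n+1} - u_n\| \le 2\|r_{n+1}\|$, which is exactly where nonnegativity of $\beta_n$ (hence $\delta_{n+1}\ge 0$) is indispensable and is the reason the PRP$+$ truncation is imposed. Since $\delta_{n+1} u_n = u_{n+1} - r_{n+1}$ and $\|u_n\|=1$, taking norms gives $\delta_{n+1} = \|u_{n+1} - r_{n+1}\|$, so by the reverse triangle inequality together with $\|u_{n+1}\|=1$ we get $|\delta_{n+1} - 1| \le \|r_{n+1}\|$. Writing $u_{n+1} - u_n = (u_{n+1} - \delta_{n+1}u_n) + (\delta_{n+1}-1)u_n$ and using $\delta_{n+1}\ge 0$ then yields
$$ \|u_{n+1} - u_n\| \le \|u_{n+1} - \delta_{n+1} u_n\| + |\delta_{n+1}-1|\,\|u_n\| = \|r_{n+1}\| + |\delta_{n+1}-1| \le 2\|r_{n+1}\|. $$

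Finally I would assemble the pieces. Lemma \ref{Zou} gives $\sum_n \langle x_n - T(x_n), d_n\rangle^2/\|d_n\|^2 < \infty$; combined with the sufficient-descent bound $\langle x_n - T(x_n), d_n\rangle^2 \ge c^2\|x_n - T(x_n)\|^4$ this yields $\sum_n \|x_n - T(x_n)\|^4/\|d_n\|^2 < \infty$. Using the standing lower bound $\|x_{n+1} - T(x_{n+1})\|\ge\varepsilon$ to trade one factor of $\|x_{n+1}-T(x_{n+1})\|^2$ for $\varepsilon^2$ then gives
$$ \sum_{n=0}^\infty \|u_{n+1} - u_n\|^2 \le 4\sum_{n=0}^\infty \frac{\|x_{n+1} - T(x_{n+1})\|^2}{\|d_{n+1}\|^2} \le \frac{4}{\varepsilon^2}\sum_{n=0}^\infty \frac{\|x_{n+1} - T(x_{n+1})\|^4}{\|d_{n+1}\|^2} < \infty, $$
which is the claim. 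Note that boundedness of $(x_n)$ is not needed here; only the lower bound $\varepsilon$ enters. The sole genuinely nontrivial ingredient is the geometric inequality in the third paragraph, and everything else is bookkeeping with Lemma \ref{Zou} and the two standing assumptions.
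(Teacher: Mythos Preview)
Your proof is correct and follows essentially the same Gilbert--Nocedal strategy as the paper: normalize the direction recursion, establish the key geometric bound $\|u_{n+1}-u_n\|\le 2\|r_{n+1}\|$, and then invoke Lemma~\ref{Zou} together with the sufficient-descent condition and the lower bound $\varepsilon$ to conclude summability. The only difference is in the derivation of the geometric inequality: the paper uses the unit-norm symmetry $\|u_{n+1}-\delta_n u_n\|=\|u_n-\delta_n u_{n+1}\|$ together with $(1+\delta_n)(u_{n+1}-u_n)=(u_{n+1}-\delta_n u_n)-(u_n-\delta_n u_{n+1})$, whereas you obtain $|\delta_{n+1}-1|\le\|r_{n+1}\|$ via the reverse triangle inequality; both routes use $\beta_n\ge 0$ in the same place and yield the same bound.
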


\begin{proof}
Assuming $\| x_n - T (x_n) \| \geq \varepsilon$ and $\langle x_n - T(x_n), d_n \rangle \leq - c \|x_n - T(x_n)\|^2$
$(n\in \mathbb{N})$, $\| d_n \|\neq 0$ holds for all $n\in \mathbb{N}$.
Define $r_n := - (x_n - T(x_n))/\|d_n\|$ and $\delta_n := \beta_n \|d_n\|/\| d_{n+1} \|$ $(n\in\mathbb{N})$.
From $\delta_n u_n = \beta_n d_n /\| d_{n+1}\|$ and $d_{n+1} = - (x_{n+1} - T(x_{n+1})) + \beta_n d_n$ $(n\in\mathbb{N})$,
we have for all $n\in\mathbb{N}$,
\begin{align*}
u_{n+1} = - r_{n+1} + \delta_n u_n,
\end{align*}
which, together with $\| u_{n+1} - \delta_n u_n \|^2 = \|u_{n+1}\|^2 -2 \delta_n \langle u_{n+1}, u_n \rangle + \delta_n^2 \|u_n\|^2
= \|u_{n}\|^2 -2 \delta_n \langle u_{n}, u_{n+1} \rangle + \delta_n^2 \|u_{n+1} \|^2 = 
\| u_n - \delta_n u_{n+1} \|^2$
$(n\in\mathbb{N})$, implies that, for all $n\in\mathbb{N}$,
\begin{align*}
\left\| r_{n+1} \right\| = \left\| u_{n+1} - \delta_n u_n \right\| = \left\| u_{n} - \delta_n u_{n+1} \right\|.
\end{align*}
Accordingly, the condition $\beta_n \geq 0$ $(n\in\mathbb{N})$ and the triangle inequality mean that, for all $n\in\mathbb{N}$,
\begin{align}\label{un}
\begin{split}
\left\| u_{n+1} - u_n \right\| 
&\leq \left(1+ \delta_n \right) \left\| u_{n+1} - u_n \right\|\\
&\leq \left\| u_{n+1} - \delta_n u_n \right\| + \left\| u_{n} - \delta_n u_{n+1} \right\|\\
&= 2 \left\| r_{n+1} \right\|.
\end{split}
\end{align}
From Lemma \ref{Zou}, $\langle x_n - T(x_n), d_n \rangle \leq - c \|x_n - T(x_n)\|^2$
$(n\in \mathbb{N})$, the definition of $r_n$, and $\| x_n - T(x_n) \| \geq \varepsilon$ $(n\in\mathbb{N})$,
we have
\begin{align*}
\infty > \sum_{n=0}^\infty \left( \frac{\left\langle x_n- T\left(x_n\right), d_n \right\rangle}{\left\|d_n\right\|}  \right)^2
\geq c^2 \sum_{n=0}^\infty \frac{\left\|x_n - T \left(x_n \right) \right\|^4}{\left\|d_n\right\|^2}
\geq c^2 \varepsilon^2 \sum_{n=0}^\infty \left\| r_{n} \right\|^2,
\end{align*}
which, together with \eqref{un}, completes the proof.
\end{proof}

The following property, referred to as Property ($\star$), is a result of modifying \cite[Property (*)]{gilbert1992} to conform to Problem \eqref{prob:1}.

Property ($\star$). Suppose that there exist positive constants $\gamma$ and $\bar{\gamma}$ such that $\gamma \leq \| x_n - T(x_n) \| \leq \bar{\gamma}$ for all $n\in \mathbb{N}$. Then Property ($\star$) holds if $b > 1$ and $\lambda > 0$ exist such that, for all $n\in \mathbb{N}$, 
\begin{align*}
\left|\beta_n \right| \leq b 
\text{ and } 
\left\| x_{n+1} - x_n \right\| \leq \lambda \text{ implies }  \left|\beta_n \right| \leq \frac{1}{2b}.
\end{align*}

The proof of the following lemma can be omitted since it is similar to the proof of \cite[Lemma 4.2]{gilbert1992}.

\begin{lem}\label{Gil_2}
Let $(x_n)_{n\in \mathbb{N}}$ and $(d_n)_{n\in \mathbb{N}}$ be the sequences generated by Algorithm \ref{algo:1} and assume that there exist $c > 0$ 
and $\gamma > 0$ such that 
$\langle x_n - T(x_n), d_n \rangle \leq - c \|x_n - T(x_n)\|^2$
and
$\|x_n - T(x_n)\| \geq \gamma$ for all $n\in \mathbb{N}$.
Suppose also that Property $(\star)$ holds.
Then there exists $\lambda > 0$ such that, for all $\Delta \in \mathbb{N} \backslash \{0\}$
and for any index $k_0$, there is $k \geq k_0$ such that $| \mathcal{K}_{k,\Delta}^\lambda | > \Delta/2$,
where 
$\mathcal{K}_{k,\Delta}^\lambda := \{ i\in \mathbb{N} \backslash \{0\} \colon  
      k \leq i \leq k + \Delta -1, \| x_{i} - x_{i-1} \| > \lambda \}$ 
      $(k\in \mathbb{N}, \Delta \in \mathbb{N} \backslash \{0\},\lambda > 0)$ 
      and $|\mathcal{K}_{k,\Delta}^\lambda|$ stands for the number of elements of $\mathcal{K}_{k,\Delta}^\lambda$.
\end{lem}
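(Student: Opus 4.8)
The plan is to argue by contradiction, taking $\lambda$ to be exactly the constant supplied by Property $(\star)$ and writing $g_n := x_n - T(x_n)$ throughout. Suppose the conclusion fails for this $\lambda$: then there are $\Delta \in \mathbb{N}\setminus\{0\}$ and an index $k_0$ with $|\mathcal{K}_{k,\Delta}^\lambda| \leq \Delta/2$ for every $k \geq k_0$. Reading this through $x_i - x_{i-1} = \alpha_{i-1} d_{i-1}$ and the definition of $\mathcal{K}_{k,\Delta}^\lambda$, it says that in each length-$\Delta$ block of consecutive step indices (starting at $k_0$ or later) at least half of the steps satisfy $\|x_i - x_{i-1}\| \leq \lambda$. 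Invoking Property $(\star)$—which also guarantees $|\beta_n| \leq b$ for all $n$—each such short step forces the corresponding coefficient to be small, namely $\beta_{i-1} \leq 1/(2b)$. Hence in every block of $\Delta$ consecutive values $\beta_j$ (with $j \geq k_0 - 1$) at least $\Delta/2$ of them are bounded by $1/(2b)$ and the remaining ones by $b$.

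The second step is to convert this into a growth bound on $\|d_n\|$. From $d_{n+1} = -g_{n+1} + \beta_n d_n$, the triangle inequality together with $\beta_n \geq 0$ and the elementary inequality $(s+t)^2 \leq 2s^2 + 2t^2$ gives $\|d_{n+1}\|^2 \leq 2\|g_{n+1}\|^2 + 2\beta_n^2 \|d_n\|^2 \leq 2\bar\gamma^2 + 2\beta_n^2\|d_n\|^2$, where $\bar\gamma$ is the upper bound on $\|g_n\|$ that comes with the statement of Property $(\star)$ (equivalently, with boundedness of $(x_n)$ and nonexpansivity of $T$). Iterating this recursion expresses $\|d_n\|^2$ as a sum of terms weighted by products $\prod_j (2\beta_j^2)$. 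The role of the threshold $1/(2b)$ is precisely that over any block of $\Delta$ consecutive indices this product is at most $(2b^2)^{\Delta/2}\,(2(1/(2b))^2)^{\Delta/2} = 1$; consequently every such product is bounded by the fixed constant $(2b^2)^\Delta$, and summing over the at most $n$ terms yields $\|d_n\|^2 = O(n)$.

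Finally I would feed this linear bound into Zoutendijk's inequality (Lemma \ref{Zou}), which applies because the sufficient-descent hypothesis and $\|g_n\| \geq \gamma > 0$ give $\langle g_n, d_n\rangle \leq -c\|g_n\|^2 < 0$. That same hypothesis, together with $\|g_n\| \geq \gamma$, yields $(\langle g_n, d_n\rangle/\|d_n\|)^2 \geq c^2\gamma^4/\|d_n\|^2 \geq c^2\gamma^4/(Cn)$ for $n$ large, so $\sum_n (\langle g_n, d_n\rangle/\|d_n\|)^2 = \infty$, contradicting Lemma \ref{Zou}. This contradiction establishes the lemma; note that this route is self-contained and does not require the direction-stability estimate of Lemma \ref{Gil_1}, which is instead reserved for the main convergence argument.

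I expect the main obstacle to be the combinatorial bookkeeping in the second step: one must organize the iterated recursion so that the partition of each index range into full length-$\Delta$ blocks plus a single short remainder is clean, and verify that the remainder contributes only the fixed factor $(2b^2)^\Delta$ rather than something growing with $n$. The algebraic heart—matching the factor $2$ from $(s+t)^2 \leq 2s^2+2t^2$ against the threshold $1/(2b)$ so that each full block contributes a product $\leq 1$—is exactly where Property $(\star)$ enters and is the point that must be checked with care; the descent and boundedness inputs are then routine.
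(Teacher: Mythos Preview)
Your argument is correct and is precisely the proof of \cite[Lemma~4.2]{gilbert1992} adapted to the fixed-point setting; the paper itself omits the proof and simply refers the reader to that lemma. One cosmetic remark: you do not actually need $\beta_n\geq 0$, since the triangle inequality already gives $\|d_{n+1}\|\leq\|g_{n+1}\|+|\beta_n|\,\|d_n\|$ and Property~$(\star)$ is phrased in terms of $|\beta_n|$, so the recursion $\|d_{n+1}\|^2\leq 2\bar\gamma^2+2\beta_n^2\|d_n\|^2$ and the block-product estimate go through without any sign hypothesis.
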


The following can be proven by referring to the proof of \cite[Theorem 4.3]{gilbert1992}.

\begin{lem}\label{Gil_3}
Let $(x_n)_{n\in \mathbb{N}}$ be the sequence generated by Algorithm \ref{algo:1} with $\beta_n \geq 0$
$(n\in\mathbb{N})$ and assume that there exists $c > 0$ such that 
$\langle x_n - T(x_n), d_n \rangle \leq - c \|x_n - T(x_n)\|^2$ for all $n\in\mathbb{N}$
and Property $(\star)$ holds.
If $(x_n)_{n\in\mathbb{N}}$ is bounded,  
$\liminf_{n\to \infty} \|x_n - T (x_n ) \| = 0$.
\end{lem}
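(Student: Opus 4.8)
The plan is to mimic the classical Gilbert--Nocedal argument (\cite[Theorem 4.3]{gilbert1992}) by deriving a contradiction from the assumption that $\liminf_{n\to\infty}\|x_n-T(x_n)\|>0$. So I would first suppose, for contradiction, that there exists $\gamma>0$ with $\|x_n-T(x_n)\|\geq\gamma$ for all $n\in\mathbb{N}$. Because $(x_n)_{n\in\mathbb{N}}$ is bounded and $T$ is nonexpansive, the sequence $(x_n-T(x_n))_{n\in\mathbb{N}}$ is bounded as well, so there is $\bar\gamma>0$ with $\gamma\leq\|x_n-T(x_n)\|\leq\bar\gamma$ for all $n$. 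This puts us exactly in the regime where Lemma \ref{Gil_1}, Lemma \ref{Gil_2}, and Property $(\star)$ all apply.

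The core of the argument is a counting estimate that plays two facts about the normalized directions $u_n:=d_n/\|d_n\|$ against each other. On one hand, Lemma \ref{Gil_1} gives $\sum_{n=0}^\infty\|u_{n+1}-u_n\|^2<\infty$, which forces the $u_n$ to change slowly: for any fixed $\Delta$, one can choose an index $k_0$ beyond which every block of $\Delta$ consecutive steps has $\sum\|u_i-u_{i-1}\|^2$ small, hence (by Cauchy--Schwarz over the block) $\|u_i-u_{k-1}\|\leq\tfrac12$ for all $i$ in the block. On the other hand, I would use the relation $x_{i}-x_{i-1}=\alpha_{i-1}d_{i-1}$ together with the uniform lower bound $\langle x_n-T(x_n),d_n\rangle\leq -c\|x_n-T(x_n)\|^2$ and the upper bound $\|x_n-T(x_n)\|\leq\bar\gamma$ to bound the total travelled distance $\sum\|x_i-x_{i-1}\|$ over the block in terms of the block length and the $u_i$, so that the number of "long" steps — those with $\|x_i-x_{i-1}\|>\lambda$ — cannot exceed roughly $\Delta/2$. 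Lemma \ref{Gil_2}, however, guarantees that for a suitable $\lambda>0$ there are infinitely many blocks in which the number of long steps \emph{exceeds} $\Delta/2$. Choosing $\Delta$ large enough and applying Lemma \ref{Gil_2} at an index $k\geq k_0$ produces the contradiction, so the standing assumption fails and $\liminf_{n\to\infty}\|x_n-T(x_n)\|=0$.

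The main obstacle, and the step requiring the most care, is the block estimate linking $\|x_i-x_{i-1}\|$ to the slow variation of $u_i$. The delicate point is to express the displacement $x_i-x_{i-1}$ through $u_{i-1}$ and then compare the direction $u_{i-1}$ to a fixed reference direction $u_{k-1}$ using the smallness $\|u_i-u_{k-1}\|\leq\tfrac12$; this is where one telescopes $x_{k-1+j}-x_{k-1}$ and bounds it using boundedness of $(x_n)$, extracting a uniform cap on how many indices in the block can have large displacement. Since this is precisely the computation carried out in \cite[Theorem 4.3]{gilbert1992}, once Lemmas \ref{Gil_1} and \ref{Gil_2} are in place the remaining work is the faithful transcription of that counting argument with $\nabla f$ replaced by $\mathrm{Id}-T$ and the descent-type hypotheses supplied by the lemma's assumptions.
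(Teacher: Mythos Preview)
Your proposal is correct and follows essentially the same route as the paper's proof: assume $\|x_n-T(x_n)\|\geq\gamma$, use boundedness to get $\bar\gamma$, invoke Lemmas \ref{Gil_1} and \ref{Gil_2}, telescope $x_l-x_{k-1}=\sum\|x_i-x_{i-1}\|u_{i-1}$, use the smallness $\|u_{i-1}-u_{k-1}\|\leq\tfrac12$ on a block of length $\Delta:=\lceil 4M/\lambda\rceil$ to bound the total displacement by $2M$, and contradict $|\mathcal{K}_{k,\Delta}^\lambda|>\Delta/2$. One small clarification: in your middle paragraph you mention the descent bound and the upper bound $\bar\gamma$ as ingredients of the block estimate, but in fact the telescope step uses only the boundedness of $(x_n)$ (to cap $\|x_l-x_{k-1}\|\leq M$) together with the slow variation of $u_n$; the descent condition enters only through Lemmas \ref{Gil_1} and \ref{Gil_2}, exactly as your final paragraph indicates.
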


\begin{proof}
Assuming that $\liminf_{n\to \infty} \|x_n - T (x_n ) \| > 0$, there exists $\gamma > 0$ such that $\| x_n - T(x_n) \| \geq \gamma$ for all $n\in\mathbb{N}$. Since $c> 0$ exists such that $\langle x_n - T(x_n), d_n \rangle \leq - c \|x_n - T(x_n)\|^2$ $(n\in\mathbb{N})$, $\| d_n \| \neq 0$ $(n\in\mathbb{N})$ holds. Moreover, the nonexpansivity of $T$ ensures that, for all $x\in \mathrm{Fix}(T)$, $\| T (x_n ) - x \| \leq \| x_n -x \|$, and this, together with the boundedness of $(x_n)_{n\in\mathbb{N}}$, implies the boundedness of $(T(x_n))_{n\in\mathbb{N}}$. Accordingly, $\bar{\gamma} > 0$ exists such that $\| x_n - T(x_n) \| \leq \bar{\gamma}$ $(n\in\mathbb{N})$. The definition of $x_n$ implies that, for all $n\geq 1$,
\begin{align*}
x_n - x_{n-1} = \alpha_{n-1} d_{n-1} = \alpha_{n-1} \left\|d_{n-1} \right\| u_{n-1} 
= \left\|x_n - x_{n-1}\right\| u_{n-1},
\end{align*}
where $u_n := d_n/\|d_n\|$ $(n\in\mathbb{N})$.
Hence, for all $l, k \in \mathbb{N}$ with $l \geq k > 0$,
\begin{align*}
x_l - x_{k-1} 
= \sum_{i=k}^l \left( x_i - x_{i-1} \right) = \sum_{i=k}^l \left\|x_i - x_{i-1}\right\| u_{i-1},
\end{align*}
which implies that
\begin{align*}
\sum_{i=k}^l \left\|x_i - x_{i-1}\right\| u_{k-1} 
= x_l - x_{k-1} - \sum_{i=k}^l  \left\|x_i - x_{i-1}\right\| \left(u_{i-1} - u_{k-1} \right).
\end{align*}
From $\| u_n \| = 1$ $(n\in\mathbb{N})$ and the triangle inequality, for all $l, k \in \mathbb{N}$ with $l \geq k > 0$,
$\sum_{i=k}^l \|x_i - x_{i-1} \| \leq \| x_l - x_{k-1} \| + 
\sum_{i=k}^l  \|x_i - x_{i-1} \| \| u_{i-1} - u_{k-1} \|$.
Since the boundedness of $(x_n)_{n\in\mathbb{N}}$ means there is $M > 0$ satisfying 
$\| x_{n+1} - x_n \| \leq M$ $(n\in \mathbb{N})$,
we find that, for all $l, k \in \mathbb{N}$ with $l \geq k > 0$,
\begin{align}\label{inequality:Gil}
\sum_{i=k}^l \left\|x_i - x_{i-1} \right\| \leq M 
   + \sum_{i=k}^l  \left\|x_i - x_{i-1} \right\| \left\| u_{i-1} - u_{k-1} \right\|.
\end{align}
Let $\lambda > 0$ be as given by Lemma \ref{Gil_2} and define $\Delta := \lceil 4M/\lambda \rceil$,
where $\lceil \cdot \rceil$ denotes the ceiling operator.
From Lemma \ref{Gil_1}, an index $k_0$ can be chosen such that 
$\sum_{i=k_0}^\infty \| u_i - u_{i-1} \|^2 \leq 1/(4 \Delta)$.
Accordingly, Lemma \ref{Gil_2} guarantees the existence of $k \geq k_0$ such that 
$| \mathcal{K}_{k,\Delta}^\lambda | > \Delta/2$.
Since the Cauchy-Schwarz inequality implies that $(\sum_{i=1}^m a_i)^2 \leq m \sum_{i=1}^m a_i^2$
$(m \geq 1, a_i \in \mathbb{R}, i=1,2,\ldots,m)$,
we have,
for all $i\in [k,k+\Delta -1]$,
\begin{align*}
\left\| u_{i-1} - u_{k-1} \right\|^2
\leq \left( \sum_{j=k}^{i-1} \left\| u_{j} - u_{j-1} \right\| \right)^2
\leq \left( i - k \right) \sum_{j=k}^{i-1} \left\| u_{j} - u_{j-1} \right\|^2
\leq 
\frac{1}{4}.
\end{align*}
Putting $l:= k+\Delta -1$, \eqref{inequality:Gil} ensures that 
\begin{align*}
M \geq \frac{1}{2} \sum_{i=k}^{k+\Delta -1} \left\| x_i - x_{i-1} \right\| 
> \frac{\lambda}{2} \left| \mathcal{K}_{k,\Delta}^\lambda \right| > \frac{\lambda \Delta}{4},
\end{align*}
which implies that $\Delta < 4M/\lambda$. 
This contradicts $\Delta := \lceil 4M/\lambda \rceil$.
Therefore, $\liminf_{n\to \infty} \|x_n - T (x_n ) \| = 0$.
\end{proof}

Now we are in the position to prove Theorem \ref{thm:4}. 

\begin{proof}
The condition $\beta_n^{\mathrm{PRP+}} \geq 0$ holds for all $n\in\mathbb{N}$.
Suppose that positive constants $\gamma$ and $\bar{\gamma}$ exist such that $\gamma \leq \|x_n - T(x_n)\| \leq \bar{\gamma}$
$(n\in\mathbb{N})$ and define $b:= 2\bar{\gamma}^2/\gamma^2$ and $\lambda := \gamma^2/(4\bar{\gamma} b)$.
The definition of $\beta_n^{\mathrm{PRP+}}$ and the Cauchy-Schwarz inequality mean that, for all $n\in\mathbb{N}$,
\begin{align*}
\left|\beta_n^{\mathrm{PRP+}} \right| 
\leq  
\frac{\left|\left\langle x_{n+1} - T \left(x_{n+1}\right), y_n \right\rangle \right|}{\left\| x_{n} - T \left(x_{n}\right) \right\|^2}
\leq 
\frac{\left\| x_{n+1} - T \left(x_{n+1}\right) \right\| \left\|y_n \right\|}{\left\| x_{n} - T \left(x_{n}\right) \right\|^2}
\leq \frac{2 \bar{\gamma}^2}{\gamma^2} = b,
\end{align*}
where the third inequality comes from 
$\|y_n\| \leq \|x_{n+1} - T(x_{n+1})\| + \| x_n - T(x_n)\| \leq 2 \bar{\gamma}$ and 
$\gamma \leq \|x_n - T(x_n)\| \leq \bar{\gamma}$ $(n\in\mathbb{N})$.
When $\| x_{n+1} - x_n \| \leq \lambda$ $(n\in\mathbb{N})$, the triangle inequality and the nonexpansivity of $T$ imply that 
$\|y_n\| \leq \|x_{n+1} - x_{n}\| + \| T(x_{n}) - T(x_{n+1})\| \leq 2 \| x_{n+1} - x_n \| \leq 2 \lambda$ $(n\in\mathbb{N})$.
Therefore, for all $n\in\mathbb{N}$,
\begin{align*}
\left|\beta_n^{\mathrm{PRP+}} \right| 
\leq 
\frac{\bar{\gamma} \left\|y_n \right\|}{\left\| x_{n} - T \left(x_{n}\right) \right\|^2}
\leq \frac{2 \lambda \bar{\gamma}}{\gamma^2} = \frac{1}{2b},
\end{align*}
which implies that Property $(\star)$ holds.
Lemma \ref{Gil_3} thus guarantees that $\liminf_{n\to\infty} \| x_n - T(x_n) \| = 0$ holds.
A discussion in the same manner as in the proof of Lemma \ref{lem:DY}(iii) leads to $\lim_{n\to\infty} \| x_n - T(x_n) \| = 0$.
This completes the proof.
\end{proof}

\subsection{Algorithm \ref{algo:1} with $\beta_n = \beta_n^{\mathrm{HS}+}$}\label{subsec:HS}
The convergence properties of the nonlinear conjugate gradient method with $\beta_n^{\mathrm{HS}}$ defined as in \eqref{beta} are similar to those with $\beta_n^{\mathrm{PRP}}$ defined as in \eqref{beta} \cite[Section 5]{hager2006}. 
On the basis of this fact and the modification of $\beta_n^{\mathrm{PRP}}$ in Subsection \ref{subsec:PRP}, 
this subsection considers Algorithm \ref{algo:1} with $\beta_n^{\mathrm{HS}+}$ defined by \eqref{Beta}.
%

Lemma \ref{Gil_3} leads to the following.

\begin{thm}\label{thm:5}
Suppose that $(x_n)_{n\in\mathbb{N}}$ and $(d_n)_{n\in\mathbb{N}}$ are the sequences generated by Algorithm \ref{algo:1} 
with $\beta_n = \beta_n^{\mathrm{HS+}}$ $(n\in\mathbb{N})$ and there exists $c > 0$ such that $\langle x_n - T(x_n), d_n \rangle \leq -c \| x_n - T(x_n) \|^2$ for all $n\in \mathbb{N}$.
If $(x_n)_{n\in\mathbb{N}}$ is bounded,
then $(x_n)_{n\in\mathbb{N}}$ either terminates at a fixed point of $T$ or 
\begin{align*}
\lim_{n\to \infty} \left\|x_n - T \left(x_n \right) \right\| = 0.
\end{align*}
\end{thm}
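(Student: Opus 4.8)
The plan is to reduce Theorem \ref{thm:5} to Lemma \ref{Gil_3}, exactly as was done for Theorem \ref{thm:4} in the previous subsection, since the hypotheses of the two theorems are identical and the structure of the argument should carry over. The key point to verify is that $\beta_n^{\mathrm{HS}+}$ satisfies the two structural requirements needed to invoke Lemma \ref{Gil_3}: namely, that $\beta_n^{\mathrm{HS}+} \geq 0$ for all $n \in \mathbb{N}$, and that Property $(\star)$ holds. The nonnegativity is immediate from the definition $\beta_n^{\mathrm{HS}+} := \max\{\cdot, 0\}$. Under the assumption $\langle x_n - T(x_n), d_n \rangle \leq -c\|x_n - T(x_n)\|^2$ $(n\in\mathbb{N})$ together with the boundedness of $(x_n)_{n\in\mathbb{N}}$, the same reasoning as in the proof of Theorem \ref{thm:4} shows that $\|d_n\| \neq 0$ and that $(T(x_n))_{n\in\mathbb{N}}$ is bounded, so there exist positive constants $\gamma, \bar{\gamma}$ with $\gamma \leq \|x_n - T(x_n)\| \leq \bar{\gamma}$ $(n\in\mathbb{N})$ once we assume, for contradiction, that $\liminf_{n\to\infty}\|x_n - T(x_n)\| > 0$.

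First I would establish the bound $|\beta_n^{\mathrm{HS}+}| \leq b$ for a suitable constant $b > 1$. Using the Cauchy--Schwarz inequality on the numerator $\langle x_{n+1} - T(x_{n+1}), y_n \rangle$ and bounding $\|y_n\| \leq \|x_{n+1} - T(x_{n+1})\| + \|x_n - T(x_n)\| \leq 2\bar{\gamma}$, the difficulty compared with the PRP case is that the denominator is now $\langle d_n, y_n \rangle$ rather than $\|x_n - T(x_n)\|^2$. Thus I must produce a lower bound on $\langle d_n, y_n \rangle$. From \eqref{Wolfe} and the sign hypothesis, one has $\langle d_n, y_n \rangle \geq (\sigma - 1)\langle d_n, x_n - T(x_n)\rangle \geq (1-\sigma)c\|x_n - T(x_n)\|^2 \geq (1-\sigma)c\gamma^2 > 0$, which gives the needed uniform positive lower bound on the denominator. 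Combining these yields $|\beta_n^{\mathrm{HS}+}| \leq 2\bar{\gamma}^2/((1-\sigma)c\gamma^2) =: b$.

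Next I would verify the implication in Property $(\star)$: if $\|x_{n+1} - x_n\| \leq \lambda$ then $|\beta_n^{\mathrm{HS}+}| \leq 1/(2b)$. When $\|x_{n+1} - x_n\| \leq \lambda$, the nonexpansivity of $T$ gives $\|y_n\| \leq \|x_{n+1} - x_n\| + \|T(x_n) - T(x_{n+1})\| \leq 2\|x_{n+1}-x_n\| \leq 2\lambda$, so that $|\beta_n^{\mathrm{HS}+}| \leq \bar{\gamma}\,\|y_n\|/\langle d_n, y_n \rangle \leq 2\lambda\bar{\gamma}/((1-\sigma)c\gamma^2)$; choosing $\lambda$ small enough (for instance $\lambda := (1-\sigma)c\gamma^2/(4\bar{\gamma}b)$) forces this to be at most $1/(2b)$. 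This confirms Property $(\star)$, so Lemma \ref{Gil_3} applies and yields $\liminf_{n\to\infty}\|x_n - T(x_n)\| = 0$, contradicting the standing assumption; hence $\liminf_{n\to\infty}\|x_n - T(x_n)\| = 0$ holds.

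Finally, to upgrade the $\liminf$ to a full limit, I would argue exactly as in the proof of Lemma \ref{lem:DY}(iii): condition \eqref{Armijo} together with the sign hypothesis $\langle x_n - T(x_n), d_n \rangle < 0$ shows that $(\|x_n - T(x_n)\|)_{n\in\mathbb{N}}$ is monotone decreasing, hence convergent, and since its $\liminf$ is zero the limit must itself be zero. The main obstacle, as indicated above, is handling the denominator $\langle d_n, y_n \rangle$ in $\beta_n^{\mathrm{HS}+}$, which unlike the PRP case does not simplify directly to a norm; the resolution is the curvature-type lower bound coming from the Wolfe condition \eqref{Wolfe} and the descent hypothesis.
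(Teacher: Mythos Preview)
Your proposal is correct and follows essentially the same route as the paper's proof: you verify Property~$(\star)$ for $\beta_n^{\mathrm{HS}+}$ by bounding the numerator via Cauchy--Schwarz and the denominator $\langle d_n, y_n\rangle$ from below using \eqref{Wolfe} together with the descent hypothesis, with the same choices $b := 2\bar{\gamma}^2/((1-\sigma)c\gamma^2)$ and $\lambda := (1-\sigma)c\gamma^2/(4\bar{\gamma}b)$, then invoke Lemma~\ref{Gil_3} and upgrade the $\liminf$ to a limit via the monotonicity argument of Lemma~\ref{lem:DY}(iii). The only cosmetic difference is that you wrap the verification of Property~$(\star)$ inside an explicit contradiction assumption $\liminf > 0$, whereas the paper leaves that step to the internal logic of Lemma~\ref{Gil_3}; either framing is fine.
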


\begin{proof}
When $m\in \mathbb{N}$ exists such that $\|x_m - T(x_m) \| =0$, Theorem \ref{thm:5} holds.
Let us consider the case where $\| x_n - T(x_n) \| \neq 0$ for all $n\in\mathbb{N}$.
Suppose that $\gamma, \bar{\gamma} > 0$ exist such that $\gamma \leq \| x_n - T(x_n) \| \leq \bar{\gamma}$ $(n\in\mathbb{N})$
and define $b:= 2\bar{\gamma}^2/((1-\sigma)c\gamma^2)$ and $\lambda := (1-\sigma)c \gamma^2/(4\bar{\gamma}b)$.
Then \eqref{Wolfe} implies that, for all $n\in \mathbb{N}$,
\begin{align*}
\left\langle d_n, y_n \right\rangle
&= \left\langle d_n, x_{n+1} - T \left( x_{n+1} \right) \right\rangle - \left\langle d_n, x_{n} - T \left( x_{n} \right) \right\rangle\\
&\geq - \left( 1 - \sigma \right)  \left\langle d_n, x_{n} - T \left( x_{n} \right) \right\rangle,
\end{align*}
which, together with the existence of $c, \gamma > 0$ such that $\langle x_n - T(x_n), d_n \rangle \leq -c \| x_n - T(x_n) \|^2$, and $\gamma \leq \| x_n - T(x_n) \|$ $(n\in \mathbb{N})$, implies that, for all $n\in\mathbb{N}$,
\begin{align*}
\left\langle d_n, y_n \right\rangle \geq \left( 1 - \sigma \right) c \left\| x_n - T \left(x_n\right) \right\|^2
\geq \left( 1 - \sigma \right) c \gamma^2 > 0.
\end{align*}
This means Algorithm \ref{algo:1} with $\beta_n = \beta_n^{\mathrm{HS+}}$ is well-defined.
From $\|x_n - T(x_n)\| \leq \bar{\gamma}$ $(n\in\mathbb{N})$ and the definition of $y_n$,
we have,
for all $n\in\mathbb{N}$,
\begin{align*}
\left| \beta_n^{\mathrm{HS+}} \right| 
\leq \frac{\left|\left\langle x_{n+1} - T \left(x_{n+1}\right), y_n \right\rangle \right|}
{\left|\left\langle d_n, y_n \right\rangle \right|}
\leq \frac{2 \bar{\gamma}^2}{\left( 1 - \sigma \right) c \gamma^2} = b.
\end{align*}
When $\| x_{n+1} - x_n \| \leq \lambda$ $(n\in\mathbb{N})$, the triangle inequality and the nonexpansivity of $T$ imply that 
$\|y_n\| \leq \|x_{n+1} - x_{n}\| + \| T(x_{n}) - T(x_{n+1})\| \leq 2 \| x_{n+1} - x_n \| \leq 2 \lambda$ $(n\in\mathbb{N})$.
Therefore, from $\| x_n - T(x_n) \| \leq \bar{\gamma}$ $(n\in\mathbb{N})$, for all $n\in\mathbb{N}$,
\begin{align*}
\left|\beta_n^{\mathrm{HS+}} \right| 
\leq 
\frac{\bar{\gamma} \left\|y_n \right\|}{\left\langle d_n, y_n \right\rangle}
\leq \frac{2 \lambda \bar{\gamma}}{\left( 1 - \sigma \right) c \gamma^2} = \frac{1}{2b},
\end{align*}
which in turn implies that Property $(\star)$ holds.
Lemma \ref{Gil_3} thus ensures that $\liminf_{n\to\infty} \| x_n - T(x_n) \| = 0$ holds.
A discussion similar to the one in the proof of Lemma \ref{lem:DY}(iii) leads to $\lim_{n\to\infty} \| x_n - T(x_n) \| = 0$.
This completes the proof.
\end{proof}

\subsection{Convergence rate analyses of Algorithm \ref{algo:1}}\label{subsec:2.6}
Subsections \ref{subsec:SD}--\ref{subsec:HS} show that Algorithm \ref{algo:1} with formulas \eqref{Beta} satisfies $\lim_{n\to \infty} \| x_n - T(x_n) \| = 0$ under certain assumptions. The next theorem establishes rates of convergence for Algorithm \ref{algo:1} with formulas \eqref{Beta}.
\begin{thm}\label{rate}
\begin{enumerate}
\item[{\em (i)}]
Under the Wolfe-type conditions \eqref{Armijo} and \eqref{Wolfe}, 
Algorithm \ref{algo:1} with $\beta_n = \beta_n^{\mathrm{SD}}$ satisfies, for all $n\in \mathbb{N}$,
\begin{align*}
\left\| x_n - T \left(x_n\right) \right\| 
\leq \frac{\left\| x_0 - T \left(x_0 \right) \right\|}{\sqrt{\delta \sum_{k=0}^n \alpha_k}}.
\end{align*}
\item[{\em (ii)}]
Under the strong Wolfe-type conditions \eqref{Armijo} and \eqref{s_Wolfe}, 
Algorithm \ref{algo:1} with $\beta_n = \beta_n^{\mathrm{DY}}$ satisfies, for all $n\in \mathbb{N}$,
\begin{align*}
\left\| x_n - T \left(x_n\right) \right\| 
\leq \frac{\left\| x_0 - T \left(x_0 \right) \right\|}{\sqrt{\frac{1}{1+\sigma} \delta \sum_{k=0}^n \alpha_k}}.
\end{align*}
\item[{\em (iii)}]
Under the strong Wolfe-type conditions \eqref{Armijo} and \eqref{s_Wolfe}, 
Algorithm \ref{algo:1} with $\beta_n = \beta_n^{\mathrm{FR}}$ satisfies, for all $n\in \mathbb{N}$,
\begin{align*}
\left\| x_n - T \left(x_n\right) \right\| 
\leq \frac{\left\| x_0 - T \left(x_0 \right) \right\|}{\sqrt{\frac{1}{1-\sigma} \delta \sum_{k=0}^n \left( 1-2\sigma + \sigma^k \right) \alpha_k}}.
\end{align*}
\item[{\em (iv)}]
Under the assumptions in Theorem \ref{thm:4},
Algorithm \ref{algo:1} with $\beta_n = \beta_n^{\mathrm{PRP}+}$ satisfies, for all $n\in \mathbb{N}$,
\begin{align*}
\left\| x_n - T \left(x_n\right) \right\| 
\leq \frac{\left\| x_0 - T \left(x_0 \right) \right\|}{\sqrt{c \delta \sum_{k=0}^n \alpha_k}}.
\end{align*}
\item[{\em (v)}]
Under the assumptions in Theorem \ref{thm:5},
Algorithm \ref{algo:1} with $\beta_n = \beta_n^{\mathrm{HS}+}$ satisfies, for all $n\in \mathbb{N}$,
\begin{align*}
\left\| x_n - T \left(x_n\right) \right\| 
\leq \frac{\left\| x_0 - T \left(x_0 \right) \right\|}{\sqrt{c \delta \sum_{k=0}^n \alpha_k}}.
\end{align*}
\end{enumerate}
\end{thm}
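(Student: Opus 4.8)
The plan is to reduce all five parts to one summation argument powered by the Armijo-type condition \eqref{Armijo}. Set $a_n := \|x_n - T(x_n)\|^2$ and, exactly as in the proofs of Theorems \ref{thm:1}--\ref{thm:5}, treat the nontrivial case $a_n > 0$ for every $n$. The common engine is the following: suppose that in a given case we can exhibit a positive coefficient $c_n$ with $\langle x_n - T(x_n), d_n\rangle \leq -c_n a_n$ for all $n$. Then \eqref{Armijo} yields
\[
a_{n+1} - a_n \leq \delta\alpha_n\langle x_n - T(x_n), d_n\rangle \leq -c_n\delta\alpha_n a_n,
\]
so $a_{n+1} \leq (1 - c_n\delta\alpha_n)a_n \leq a_n$ and $(a_n)_{n\in\mathbb{N}}$ is monotone decreasing. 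Summing $c_k\delta\alpha_k a_k \leq a_k - a_{k+1}$ from $k=0$ to $n$ and dropping $a_{n+1}\geq 0$ gives $\delta\sum_{k=0}^n c_k\alpha_k a_k \leq a_0$; monotonicity ($a_k \geq a_n$ for $k\leq n$) then produces $\delta a_n\sum_{k=0}^n c_k\alpha_k \leq a_0$, i.e. $\|x_n - T(x_n)\| \leq \|x_0 - T(x_0)\|/\sqrt{\delta\sum_{k=0}^n c_k\alpha_k}$. Thus the whole theorem reduces to identifying the descent coefficient $c_n$ in each case.

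Three of the coefficients are immediate. For (i) the choice $\beta_n^{\mathrm{SD}}=0$ forces $d_n = -(x_n - T(x_n))$, so $\langle x_n - T(x_n), d_n\rangle = -a_n$ and $c_n = 1$. For (iv) and (v) the standing hypothesis of Theorems \ref{thm:4} and \ref{thm:5} is precisely $\langle x_n - T(x_n), d_n\rangle \leq -c\,a_n$, so $c_n = c$. For (iii) the needed bound is already contained in \eqref{inequality:1}: its upper estimate reads $\langle x_n - T(x_n), d_n\rangle \leq (-2 + \sum_{j=0}^n\sigma^j)a_n$, and since $\sum_{j=0}^n\sigma^j = (1-\sigma^{n+1})/(1-\sigma)$ this is $-\frac{1-2\sigma+\sigma^{n+1}}{1-\sigma}a_n$, whose coefficient is positive because $\sigma\leq 1/2$; feeding this $c_n$ into the engine gives the denominator in (iii).

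The one genuinely new computation is the coefficient for (ii). I would track the ratio $r_n := \langle x_n - T(x_n), d_n\rangle/a_n$, which equals $-1$ at $n=0$. Combining $d_{n+1} = -(x_{n+1}-T(x_{n+1})) + \beta_n^{\mathrm{DY}}d_n$ with identity \eqref{DY1} and the definition of $\beta_n^{\mathrm{DY}}$ collapses $r_{n+1}$ to $\langle d_n, x_n-T(x_n)\rangle/\langle d_n, y_n\rangle$, where $y_n := (x_{n+1}-T(x_{n+1}))-(x_n-T(x_n))$. Writing $w := \langle d_n, x_n - T(x_n)\rangle < 0$ (Lemma \ref{lem:DY}(i)) and $v := \langle d_n, x_{n+1}-T(x_{n+1})\rangle$, the strong Wolfe condition \eqref{s_Wolfe} gives $|v| \leq -\sigma w$, hence $\langle d_n, y_n\rangle = v - w$ lies in $[(1-\sigma)|w|, (1+\sigma)|w|]$ and is positive; therefore $r_{n+1} = w/(v-w) \leq -1/(1+\sigma)$. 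Since $r_0 = -1 \leq -1/(1+\sigma)$ as well, the uniform coefficient is $c_n = 1/(1+\sigma)$, which is exactly the factor in (ii).

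The main obstacle is isolating this coefficient for (ii). Unlike the other four parts, it is not handed to us and must be pried out of the $\beta_n^{\mathrm{DY}}$ recursion; the delicate point is that the two-sided strong Wolfe bound \eqref{s_Wolfe} (rather than the one-sided \eqref{Wolfe} used for well-definedness in Lemma \ref{lem:DY}) is what confines $\langle d_n, y_n\rangle$ to $[(1-\sigma)|w|,(1+\sigma)|w|]$ and so keeps $c_n$ bounded away from $0$. This is also the reason (ii) and (iii) are stated under the strong Wolfe-type conditions. Everything else is the uniform summation-and-monotonicity argument, so once the five coefficients are in hand each of (i)--(v) follows by one substitution.
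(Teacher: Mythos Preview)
Your proposal is correct and follows essentially the same approach as the paper: both feed the Armijo condition \eqref{Armijo} into a telescoping inequality $c_k\delta\alpha_k a_k \leq a_k - a_{k+1}$, sum it, and then use the monotone decrease of $(a_n)$, so that the only case-dependent work is identifying the descent coefficient $c_k$ via $d_k=-(x_k-T(x_k))$ for (i), the standing hypothesis for (iv)--(v), inequality \eqref{inequality:1} for (iii), and the DY identity \eqref{DY1} together with the strong Wolfe bound for (ii). Your evaluation $c_n=(1-2\sigma+\sigma^{n+1})/(1-\sigma)$ in (iii) is in fact the correct simplification of $-2+\sum_{j=0}^n\sigma^j$; the paper's $\sigma^k$ in place of $\sigma^{k+1}$ is an indexing slip.
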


\begin{proof}
(i)
From $d_k = - (x_k - T(x_k))$ $(k\in \mathbb{N})$ and \eqref{Armijo}, we have that
$0 \leq \delta \alpha_k \|x_k - T(x_k)\|^2 \leq \| x_{k} - T(x_{k}) \|^2 - \|x_{k+1} - T(x_{k+1})\|^2$ $(k\in \mathbb{N})$.
Summing up this inequality from $k=0$ to $k=n$ guarantees that, for all $n\in \mathbb{N}$,
\begin{align*}
\delta \sum_{k=0}^n \alpha_k \left\|x_k - T\left(x_k\right) \right\|^2 
\leq \left\| x_{0} - T \left(x_{0} \right) \right\|^2 - \left\|x_{n+1} - T \left(x_{n+1}\right) \right\|^2 
\leq \left\| x_{0} - T \left(x_{0} \right) \right\|^2,
\end{align*}
which, together with the monotone decreasing property of $(\| x_n - T(x_n) \|^2)_{n\in \mathbb{N}}$, implies that, for all $n\in \mathbb{N}$,
\begin{align*}
\delta \left\|x_n - T\left(x_n\right) \right\|^2 \sum_{k=0}^n \alpha_k  
\leq \left\| x_{0} - T \left(x_{0} \right) \right\|^2.
\end{align*}
This completes the proof.

(ii)
Condition \eqref{s_Wolfe} and Lemma \ref{lem:DY}(i) ensure that 
$- \sigma \leq \langle x_{k+1} - T(x_{k+1}), d_k \rangle/\langle x_{k} - T(x_{k}), d_k \rangle \leq \sigma$ $(k\in \mathbb{N})$.
Accordingly, \eqref{DY1} means that, for all $k\in \mathbb{N}$,
\begin{align*}
\left\langle x_{k+1} - T \left(x_{k+1} \right), d_{k+1} \right\rangle 
&= \frac{\left\langle x_{k} - T \left(x_{k} \right), d_{k} \right\rangle}{\left\langle d_{k}, \left(x_{k+1} - T \left(x_{k+1} \right) \right) - 
  \left( x_{k} - T \left(x_{k} \right) \right) \right\rangle} 
  \left\| x_{k+1} - T \left(x_{k+1} \right) \right\|^2\\
&= \left(\frac{\langle x_{k+1} - T(x_{k+1}), d_k \rangle}{\langle x_{k} - T(x_{k}), d_k \rangle} -1 \right)^{-1}  
  \left\| x_{k+1} - T \left(x_{k+1} \right) \right\|^2\\
&\leq - \frac{1}{1+\sigma} \left\| x_{k+1} - T \left(x_{k+1} \right) \right\|^2.   
\end{align*}
Hence, \eqref{Armijo} implies that, for all $k\in \mathbb{N}$,
\begin{align*}
\left\|  x_{k+1} - T \left(x_{k+1} \right) \right\|^2 - \left\|  x_{k} - T \left(x_{k} \right) \right\|^2
\leq - \frac{1}{1+\sigma} \delta \alpha_k \left\| x_{k} - T \left(x_{k} \right) \right\|^2.
\end{align*}
Summing up this inequality from $k=0$ to $k=n$ and the monotone decreasing property of $(\| x_n - T(x_n) \|^2)_{n\in \mathbb{N}}$ 
ensure that, for all $n\in\mathbb{N}$,
\begin{align*}
\frac{1}{1+\sigma} \delta \left\|x_n - T\left(x_n\right) \right\|^2 \sum_{k=0}^n \alpha_k  
\leq \left\| x_{0} - T \left(x_{0} \right) \right\|^2,
\end{align*}
which completes the proof.

(iii)
Inequality \eqref{inequality:1} guarantees that, for all $k\in \mathbb{N}$,
\begin{align*}
\left\langle x_{k} - T \left(x_{k} \right), d_{k} \right\rangle
\leq \left(-2 + \sum_{j=0}^k \sigma^j \right) \left\| x_{k} - T \left(x_{k} \right) \right\|^2
= - \frac{1-2\sigma + \sigma^k}{1 - \sigma} \left\| x_{k} - T \left(x_{k} \right) \right\|^2,
\end{align*}
which, together with \eqref{Armijo}, implies that, for all $k\in \mathbb{N}$,
\begin{align*}
\left\|  x_{k+1} - T \left(x_{k+1} \right) \right\|^2 - \left\|  x_{k} - T \left(x_{k} \right) \right\|^2
\leq - \frac{1-2\sigma + \sigma^k}{1-\sigma} \delta \alpha_k \left\| x_{k} - T \left(x_{k} \right) \right\|^2.
\end{align*}
Summing up this inequality from $k=0$ to $k=n$ and the monotone decreasing property of $(\| x_n - T(x_n) \|^2)_{n\in \mathbb{N}}$ 
ensure that, for all $n\in\mathbb{N}$,
\begin{align*}
\frac{1}{1-\sigma} \delta \left\|x_n - T\left(x_n\right) \right\|^2 \sum_{k=0}^n \left(1-2\sigma + \sigma^k \right) \alpha_k  
\leq \left\| x_{0} - T \left(x_{0} \right) \right\|^2,
\end{align*}
which completes the proof.

(iv), (v)
Since there exists $c > 0$ such that $\langle x_k - T(x_k), d_k \rangle \leq  -c \| x_k - T(x_k)\|^2$ for all $k\in \mathbb{N}$,
we have from \eqref{Armijo} and the  monotone decreasing property of $(\| x_n - T(x_n) \|^2)_{n\in \mathbb{N}}$ that, for all $n\in \mathbb{N}$,
\begin{align*}
c \delta \left\| x_{n} - T \left(x_{n} \right) \right\|^2 \sum_{k=0}^n \alpha_k 
\leq
c \delta \sum_{k=0}^n \alpha_k \left\| x_{k} - T \left(x_{k} \right) \right\|^2
\leq 
\left\|  x_{0} - T \left(x_{0} \right) \right\|^2.
\end{align*}
This concludes the proof.
\end{proof}

The conventional Krasnosel'ski\u\i-Mann algorithm \eqref{kra} with a step size sequence $(\alpha_n)_{n\in\mathbb{N}}$ obeying \eqref{Dunn} satisfies the following inequality \cite[Propositions 10 and 11]{cominetti2014}:
\begin{align*}
\left\| x_n - T \left(x_n\right) \right\| 
\leq \frac{\mathrm{d}\left(x_0, \mathrm{Fix} \left(T \right) \right)}{\sqrt{\sum_{k=0}^n \alpha_k \left(1-\alpha_k \right)}}
\text{ } \left(n\in\mathbb{N} \right),
\end{align*}
where $\mathrm{d}(x_0, \mathrm{Fix} (T)) := \min_{x\in \mathrm{Fix}(T)} \| x_0 - x \|$. When $\alpha_n$ $(n\in \mathbb{N})$ is a constant in the range of (0,1), which is the most tractable choice of step size satisfying \eqref{Dunn}, the Krasnosel'ski\u\i-Mann algorithm \eqref{kra} has the rate of convergence, 
\begin{align}\label{dunn_rate}
\left\| x_n - T \left(x_n\right) \right\| = O \left( \frac{1}{\sqrt{n+1}} \right).
\end{align}
Meanwhile, according to Theorem 5 in \cite{mag2004}, Algorithm \eqref{kra} with $(\alpha_n)_{n\in\mathbb{N}}$ satisfying the Armijo-type condition \eqref{armijo0} satisfies, for all $n\in \mathbb{N}$,
\begin{align}\label{mag_rate}
\left\| x_n - T \left(x_n\right) \right\| 
\leq \frac{\left\| x_0 - T \left(x_0 \right) \right\|}{\sqrt{\beta \sum_{k=0}^n \left( \alpha_k - \frac{1}{2} \right)^2}}.
\end{align}

In general, the step sizes satisfying \eqref{Dunn} do not coincide with those satisfying the Armijo-type condition \eqref{armijo0} or the Wolfe-type conditions \eqref{Armijo} and \eqref{Wolfe}. This is because the line search methods based on the Armijo-type conditions \eqref{armijo0} and \eqref{Armijo} determine step sizes at each iteration $n$ so as to satisfy $\| x_{n+1} - T(x_{n+1}) \| < \|x_n - T(x_n)\|$, while the constant step sizes satisfying \eqref{Dunn} do not change at each iteration. Accordingly, it would be difficult to evaluate the efficiency of these algorithms by using only the theoretical convergence rates in \eqref{dunn_rate}, \eqref{mag_rate}, and Theorem \ref{rate}. To verify whether Algorithm \ref{algo:1} with the convergence rates in Theorem \ref{rate} converges faster than the previous algorithms \cite[Propositions 10 and 11]{cominetti2014}, \cite[Theorem 5]{mag2004} with convergence rates \eqref{dunn_rate} and \eqref{mag_rate}, the next section numerically compares their abilities to solve concrete constrained smooth convex optimization problems.

\section{Application of Algorithm \ref{algo:1} to constrained smooth convex optimization}\label{sec:3}
This section considers the following problem:
\begin{align}\label{csco}
\text{Minimize } f \left(x \right) \text{ subject to } x\in C,
\end{align}
where $f \colon \mathbb{R}^d \to \mathbb{R}$ is convex, $\nabla f \colon \mathbb{R}^d \to \mathbb{R}^d$ is Lipschitz continuous 
with a constant $L$, and 
$C \subset \mathbb{R}^d$ is a nonempty, closed convex set onto which the metric projection $P_C$ can be efficiently computed.
\subsection{Experimental conditions and fixed point and line search algorithms used in the experiment}\label{subsec:3.0}
Problem \eqref{csco} can be solved by using the conventional Krasnosel'ski\u\i-Mann algorithm \eqref{kra} 
with a nonexpansive mapping $T := P_C (\mathrm{Id} - \lambda \nabla f)$ satisfying $\mathrm{Fix}(T) = \argmin_{x\in C} f(x)$, where $\lambda \in (0,2/L]$ \cite[Proposition 2.3]{iiduka_JOTA}.
It is represented as follows:  
\begin{align}\label{previous}
x_{n+1} = x_n + \alpha_n \left( P_C \left(x_n - \lambda \nabla f \left(x_n \right) \right) - x_n \right),
\end{align}
where $x_0 \in \mathbb{R}^d$ and $(\alpha_n)_{n\in \mathbb{N}}$ is a sequence satisfying \eqref{Dunn} or the Armijo-type condition \eqref{armijo0}.
Algorithm \ref{algo:1} with $T := P_C (\mathrm{Id} - \lambda \nabla f)$ is as follows: 
\begin{align}\label{proposed}
\begin{split}
&x_{n+1} := x_n + \alpha_n d_n,\\
&d_{n+1} := - \left( x_{n+1} -  P_C \left( x_{n+1} - \lambda \nabla f \left(x_{n+1} \right) \right) \right) + \beta_n d_n,
\end{split}
\end{align}
where $x_0, d_0 := -(x_0- P_C(x_0 - \lambda \nabla f (x_0))) \in \mathbb{R}^d$,
$(\alpha_n)_{n\in\mathbb{N}}$ is a sequence satisfying the Wolfe-type conditions \eqref{Armijo} and \eqref{Wolfe},
and $(\beta_n)_{n\in\mathbb{N}}$ is defined by each of the formulas \eqref{Beta} with 
$T := P_C (\mathrm{Id} - \lambda \nabla f)$ (see also \eqref{formulas}).

The best conventional nonlinear conjugate gradient method for unconstrained smooth nonconvex optimization 
was proposed by Hager and Zhang \cite{HZ2005,HZ2006}, and it uses the HS formula defined as in \eqref{beta}:
\begin{align*}
\beta_n^{\mathrm{HZ}} &:= \frac{1}{\left\langle d_n, y_n \right\rangle} 
\left\langle y_n - 2 d_n \frac{\left\| y_n \right\|^2}{\left\langle d_n, y_n \right\rangle}, \nabla f\left(x_{n+1} \right) \right\rangle\\
&= \beta_n^{\mathrm{HS}} - 2 \frac{\left\| y_n \right\|^2}{\left\langle d_n, y_n \right\rangle}
\frac{\left\langle \nabla f\left(x_{n+1} \right), d_n \right\rangle}{\left\langle d_n, y_n \right\rangle}.
\end{align*}
Replacing $\nabla f$ in the above formula with $\mathrm{Id} - P_C (\mathrm{Id} -\lambda \nabla f)$ leads to the HZ-type formula for Problem \eqref{csco}:
\begin{align}\label{HZ}
&\beta_n^{\mathrm{HZ}} := \beta_n^{\mathrm{HS}} - 2 \frac{\left\| y_n \right\|^2}{\left\langle d_n, y_n \right\rangle}
\frac{\left\langle x_{n+1} - P_C \left(x_{n+1} - \lambda \nabla f \left(x_{n+1} \right) \right), d_n \right\rangle}{\left\langle d_n, y_n \right\rangle},
\end{align}
where $y_n := (x_{n+1} - P_C(x_{n+1} -\lambda \nabla f(x_{n+1}) ) ) - (x_n - P_C(x_n -\lambda \nabla f(x_n)) )$ and 
$\beta_n^{\mathrm{HS}}$ is defined by 
$\beta_n^{\mathrm{HS}} := \langle x_{n+1} - P_C(x_{n+1} - \lambda \nabla f (x_{n+1})), y_n \rangle/\langle d_n, y_n \rangle$.
We tested Algorithm \eqref{proposed} with $\beta_n := \beta_n^{\mathrm{HZ}}$ defined by \eqref{HZ} in order to see how it works on Problem \eqref{csco}.

We used the Virtual Desktop PC at the Ikuta campus of Meiji University. The PC has 8 GB of RAM memory, 1 core Intel Xeon 2.6 GHz CPU, and a Windows 8.1 operating system. The algorithms used in the experiment were written in MATLAB (R2013b), and they are summarized as follows.
\begin{description}
\item[\textbf{SD-1}:]
Algorithm \eqref{previous} with constant step sizes $\alpha_n := 0.5$ $(n\in\mathbb{N})$ \cite[Theorem 5.14]{b-c}. 
\item[\textbf{SD-2}:] 
Algorithm \eqref{previous} with $\alpha_n$ satisfying the Armijo-type condition \eqref{armijo0} when 
$\beta = 0.5$ \cite[Theorems 4 and 8]{mag2004}.
\item[\textbf{SD-3}:]
Algorithm \eqref{proposed} with $\alpha_n$ satisfying the Wolfe-type conditions \eqref{Armijo} and \eqref{Wolfe} 
and $\beta_n := \beta_n^{\mathrm{SD}}$ (Theorem \ref{thm:1}).
\item[\textbf{FR}:]
Algorithm \eqref{proposed} with $\alpha_n$ satisfying the Wolfe-type conditions \eqref{Armijo} and \eqref{Wolfe} 
and $\beta_n := \beta_n^{\mathrm{FR}}$ (Theorem \ref{thm:3}).
\item[\textbf{PRP+}:]
Algorithm \eqref{proposed} with $\alpha_n$ satisfying the Wolfe-type conditions \eqref{Armijo} and \eqref{Wolfe} 
and $\beta_n := \beta_n^{\mathrm{PRP+}}$ (Theorem \ref{thm:4}).
\item[\textbf{HS+}:]
Algorithm \eqref{proposed} with $\alpha_n$ satisfying the Wolfe-type conditions \eqref{Armijo} and \eqref{Wolfe} 
and $\beta_n := \beta_n^{\mathrm{HS+}}$ (Theorem \ref{thm:5}).
\item[\textbf{DY}:]
Algorithm \eqref{proposed} with $\alpha_n$ satisfying the Wolfe-type conditions \eqref{Armijo} and \eqref{Wolfe} 
and $\beta_n := \beta_n^{\mathrm{DY}}$ (Theorem \ref{thm:2}).
\item[\textbf{HZ}:]
Algorithm \eqref{proposed} with $\alpha_n$ satisfying the Wolfe-type conditions \eqref{Armijo} and \eqref{Wolfe} 
and $\beta_n := \beta_n^{\mathrm{HZ}}$ defined by \eqref{HZ} \cite{HZ2005,HZ2006}.
\end{description}

The experiment used the following line search algorithm \cite[Algorithm 4.6]{lewis2013} to find step sizes satisfying 
the Wolfe-type conditions \eqref{Armijo} and \eqref{Wolfe} with $\delta := 0.3$ and $\sigma := 0.5$ 
that were chosen by referring to \cite[Subsection 6.1]{lewis2013}, 
where, for each $n$, $A_n(\cdot)$ and $W_n(\cdot)$ are 
\begin{align*}
A_n(t) &\colon \left\|x_n \left(t \right) - T \left(x_n\left(t\right) \right) \right\|^2 
- \left\|x_n  - T \left(x_n \right) \right\|^2 < \delta t \left\langle x_n  - T \left(x_n \right), d_n \right\rangle,\\
W_n(t) &\colon \left\langle x_n\left(t \right)  - T \left(x_n\left(t \right) \right), d_n \right\rangle 
> \sigma \left\langle x_n  - T \left(x_n \right), d_n \right\rangle.
\end{align*}

\begin{algo}{\em \cite[Algorithm 4.6]{lewis2013}}\label{line_search}
\begin{algorithmic}
{\em
\REQUIRE $A_n(\cdot), 
W_n(\cdot) 
$.
\ENSURE $A_n(\alpha)$ and $W_n(\alpha)$.
\STATE $\alpha\leftarrow{}0, \beta\leftarrow{}\infty, t\leftarrow{}1$.
\LOOP
\IF{$\lnot{}A_n(t)$}
\STATE $\beta\leftarrow{}t$.
\ELSIF{$\lnot{}W_n(t)$}
\STATE $\alpha\leftarrow{}t$.
\ELSE
\STATE (\textit{$\alpha:$ found}).
\ENDIF
\IF{$\beta<\infty$}
\STATE $t\leftarrow{}\frac{1}{2}(\alpha+\beta)$.
\ELSE
\STATE $t\leftarrow{}2\alpha$.
\ENDIF
\ENDLOOP
}
\end{algorithmic}
\end{algo}
For Algorithm SD-2, we replaced $A_n(\cdot)$ above by 
\begin{align*}
A_n(t) &\colon g_n\left(t\right) - g_n(0) < - D t \left\| x_n  - T \left(x_n \right) \right\|^2,
\end{align*}
where $D := \delta = 0.3$ and $g_n$ is defined as in \eqref{gn},
and deleted $W_n(\cdot)$ from the line search algorithm.
For Algorithms FR, PRP+, HS+, DY, and HZ, if the step sizes satisfying the Wolfe-type conditions \eqref{Armijo} and \eqref{Wolfe} were not computed by using Algorithm \ref{line_search},
the step sizes were computed by using Algorithm \ref{line_search} when $d_n := -(x_n - T(x_n))$.
This is because Algorithm \ref{line_search} for Algorithm SD-3, which uses $d_n := -(x_n - T(x_n))$ $(n\in\mathbb{N})$, 
had a 100\% success rate in computing the step sizes satisfying \eqref{Armijo} and \eqref{Wolfe}. 
Tables \ref{SD_0}, \ref{CG_0}, \ref{SD}, and \ref{CG} indicate the satisfiability rates (defined below) of computing the step sizes for the algorithms in the experiment.

The stopping condition was 
\begin{align}\label{stop}
n = 10 \text{ or } \left\| x_{n_0}  -T \left(x_{n_0}\right) \right\| = 0 \text{ for some } n_0 \in [0,10].
\end{align}
Before describing the results, let us describe the notation used to verify the numerical performance of the algorithms.
\begin{itemize} 
\item
$I \colon$ the number of initial points
\item
$x_0^{(i)} \colon$ the initial point chosen randomly $(i=1,2,\ldots, I)$
\item
$\mathrm{ALGO} \colon$ each of Algorithms SD-1, SD-2, SD-3, FR, PRP+, HS+, DY, and HZ 
($\mathrm{ALGO} \in \{$SD-1, SD-2, SD-3, FR, PRP+, HS+, DY, HZ$\}$)
\item
$N_1 (x_0^{(i)}, \mathrm{ALGO}) \colon$ the number of step sizes computed by Algorithm \ref{line_search} for ALGO with $x_0^{(i)}$
before ALGO satisfies the stopping condition \eqref{stop}
\item
$N_2 (x_0^{(i)}, \mathrm{ALGO}) \colon$ the number of iterations needed to satisfy the stopping condition \eqref{stop}
for ALGO with $x_0^{(i)}$
\end{itemize}
Note that $N_1 (x_0^{(i)},$ SD-1) stands for the number of iterations $n$ 
satisfying $A_n(0.5)$ and $W_n(0.5)$ before Algorithm SD-1 with $x_0^{(i)}$ satisfies the stopping condition \eqref{stop}.
The satisfiability rate (SR) of Algorithm \ref{line_search} to compute the step sizes for each of the algorithms is defined by
\begin{align}\label{SR}
\text{SR(ALGO)} := 
\frac{\sum_{i=1}^I N_1 \left(x_0^{(i)}, \mathrm{ALGO}\right)}
{\sum_{i=1}^I N_2 \left(x_0^{(i)}, \mathrm{ALGO} \right)} \times 100 \text{ } [\%].
\end{align}
We performed 100 samplings, each starting from different random initial points (i.e., $I := 100$) and averaged their results.

\subsection{Constrained quadratic programming problem}\label{subsec:3.2}
In this subsection, let us consider the following constrained quadratic programming problem:
\begin{prob}\label{QP}
Suppose that $C$ is a nonempty, closed convex subset of $\mathbb{R}^d$ onto which $P_C$ can be efficiently computed, 
$Q \in \mathbb{R}^{d \times d}$ is positive semidefinite with 
the eigenvalues $\lambda_{\min} := \lambda_1, \lambda_2, \ldots, \lambda_d =: \lambda_{\max}$ satisfying $\lambda_i \leq \lambda_j$
($i \leq j$),
and $b\in \mathbb{R}^d$.
Our objective is to 
\begin{align*}
\text{minimize } f(x) := \frac{1}{2} \left\langle x,Qx \right\rangle + \left\langle b,x \right\rangle
\text{ subject to } x \in C.
\end{align*} 
\end{prob}
Since $f$ above is convex and $\nabla f(x) = Qx +b$ $(x\in \mathbb{R}^d)$ is Lipschitz continuous such that
the Lipschitz constant of $\nabla f$ is the maximum eigenvalue $\lambda_{\max}$ of $Q$, 
Problem \ref{QP} is an example of Problem \eqref{csco}.

We compared the proposed algorithms SD-3, FR, PRP+, HS+, DY, and HZ with the previous algorithms SD-1 and SD-2
by applying them to Problem \ref{QP} (i.e., the fixed point problem for $T(x) := P_C (x - (2/\lambda_{\max}) (Qx + b))$ $(x\in \mathbb{R}^d)$) 
in the following cases:
\begin{align*}
&d:= 10^3 \text{ or } 10^4, \text{ } \lambda_{\min} := 0, \text{ } \lambda_{\max} := d, \text{ } 
\lambda_i \in [0,d] \text{ } (i=2,3,\dots, d-1),\\
&b, c \in (-32, 32)^d, \text{ } 
C := \left\{ x \in \mathbb{R}^d \colon \left\| x - c \right\| \leq 1  \right\}.
\end{align*}  
We randomly chose $\lambda_i \in [0,d]$ $(i=2,3,\ldots,d-1)$ and set $Q$ as a diagonal
matrix with eigenvalues $\lambda_1, \lambda_2, \ldots, \lambda_{\max}$. 
The experiment used two random numbers in the range of $(-32,32)^d$ for $b$ and $c$ to satisfy 
$C \cap \{ x\in \mathbb{R}^d \colon \nabla f(x) = 0 \} = \emptyset$.
Since $C$ is a closed ball with center $c$ and radius $1$, $P_C$ can be computed within a finite number of arithmetic operations. More precisely, $P_C (x) := c + (x- c)/\| x- c \|$ if $\| x - c \| > 1$, or $P_C (x) := x$ if $\| x - c \| \leq 1$. 

\begin{table}[htbp]
  \caption{Satisfiability rate of Algorithm \ref{line_search} for Algorithms SD-1, SD-2, and SD-3 applied to Problem \ref{QP} 
  when $d:= 10^3, 10^4$
  }\label{SD_0}
  \centering
  \begin{tabular}{c|cc}
    \hline
      Algorithm & SR $(d:=10^3)$ & SR ($d:=10^4$)  \\
    \hline
      SD-1      & 55.9\%          & 26.3\%  \\
      SD-2      & 100\%           & 100\%   \\
      SD-3      & 100\%           & 100\%   \\
    \hline
  \end{tabular}
\end{table}
\begin{table}[htbp]
  \caption{Satisfiability rate of Algorithm \ref{line_search} for Algorithms FR, PRP+, HS+, DY, and HZ applied to Problem \ref{QP} 
  when $d:= 10^3, 10^4$}\label{CG_0}
  \centering
  \begin{tabular}{c|cc}
    \hline
      Algorithm & SR ($d:=10^3$) & SR ($d := 10^4$) \\
    \hline
      FR & 19.7\% & 28.1\% \\
      PRP+ & 100\% & 100\% \\
      HS+ & 100\% & 98.9\% \\
      DY & 21.6\% & 27.2\% \\
      HZ & 20.0\% & 20.0\% \\
    \hline
  \end{tabular}
\end{table}

Table \ref{SD_0} shows the satisfiability rates as defined by \eqref{SR} for Algorithms SD-1, SD-2, and SD-3 
that are applied to Problem \ref{QP}. 
It can be seen that the step sizes for SD-1 (constant step sizes $\alpha_n := 0.5$) do not always satisfy the Wolfe-type conditions \eqref{Armijo} and \eqref{Wolfe}, whereas the step sizes computed by Algorithm \ref{line_search} and SD-2 (resp. Algorithm SD-3) definitely satisfy the Armijo-type condition \eqref{armijo0} (resp. the Wolfe-type conditions \eqref{Armijo} and \eqref{Wolfe}). 

Table \ref{CG_0} showing the satisfiability rates for Algorithms FR, PRP+, HS+, DY, and HZ
indicates that Algorithm \ref{line_search} for PRP+ and HS+ has high success rates at computing the step sizes satisfying \eqref{Armijo} and \eqref{Wolfe}, while the SRs of Algorithm \ref{line_search} for other algorithms are low. 
It can be seen from Tables \ref{SD_0} and \ref{CG_0} that SD-3, PRP+, and HS+ are robust in the sense that Algorithm \ref{line_search} can compute the step sizes satisfying the Wolfe-type conditions \eqref{Armijo} and \eqref{Wolfe}.  

Figure \ref{sd_qp} indicates the behaviors of SD-1, SD-2, and SD-3 when $d:= 10^3$. The y-axes in Figures \ref{sd_qp}(a) and \ref{sd_qp}(b) represent the value of $\|x_n - T(x_n)\|$. The x-axis in Figure \ref{sd_qp}(a) represents the number of iterations, and the x-axis in Figure \ref{sd_qp}(b) represents the elapsed time. If the $(\| x_n - T(x_n) \|)_{n\in\mathbb{N}}$ generated by the algorithms converges to $0$, they also converge to a fixed point of $T$. Figure \ref{sd_qp}(a) shows that SD-2 and SD-3 terminate at fixed points of $T$ within a finite number of iterations. It can be seen from Figure \ref{sd_qp}(a) and Figure \ref{sd_qp}(b) that SD-3 reduces the iterations and running time needed to find a fixed point compared with SD-2. These figures also show that $(\|x_n - T(x_n)\|)_{n\in\mathbb{N}}$ generated by SD-1 converges slowest and that SD-1 cannot find a fixed point of $T$ before the tenth iteration. We can thus see that the use of the step sizes satisfying the Wolfe-type conditions is a good way to solve fixed point problems by using the Krasnosel'ski\u\i-Mann algorithm. Figure \ref{sd_qp1} indicates the behaviors of SD-1, SD-2, and SD-3 when $d:= 10^4$. Similarly to what is shown in Figure \ref{sd_qp}, SD-3 finds a fixed point of $T$ faster than SD-1 and SD-2 can.

\begin{figure}[H]
  \centering
  \subcaptionbox{$\|x_n - T(x_n) \|$ vs. no. of iterations}{\includegraphics{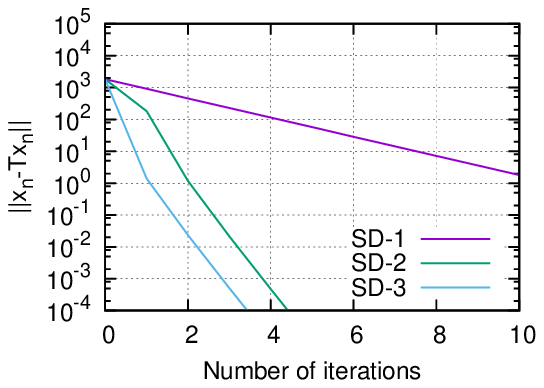}}
  \subcaptionbox{$\|x_n - T(x_n) \|$ vs. elapsed time}{\includegraphics{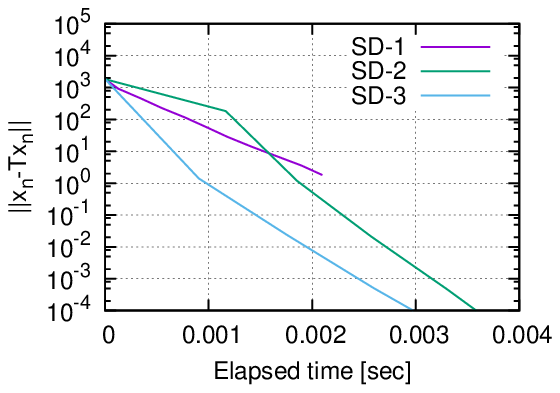}}
  \caption{Evaluation of $\|x_n - T(x_n)\|$ in terms of the number of iterations and elapsed time for Algorithms SD-1, SD-2, and SD-3
  for Problem \ref{QP} when $d := 10^3$}\label{sd_qp}
\end{figure}

\begin{figure}[H]
  \centering
  \subcaptionbox{$\|x_n - T(x_n) \|$ vs. no. of iterations}{\includegraphics{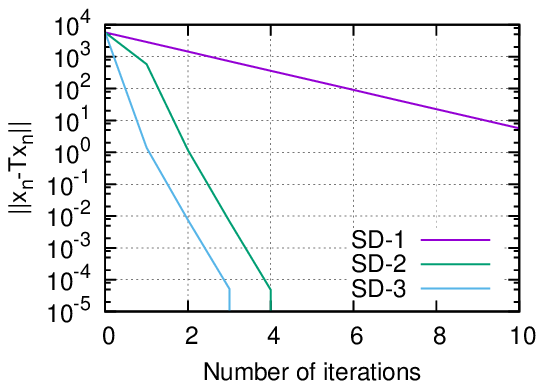}}
  \subcaptionbox{$\|x_n - T(x_n) \|$ vs. elapsed time}{\includegraphics{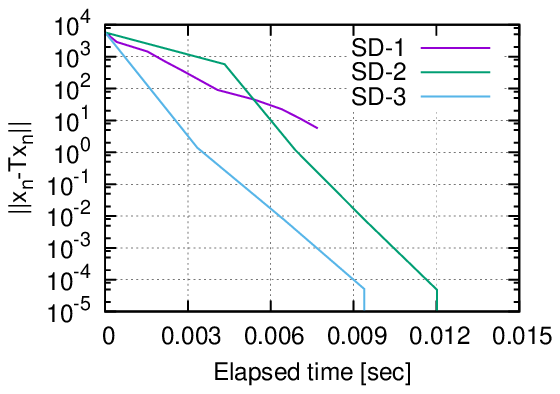}}
  \caption{Evaluation of $\|x_n - T(x_n)\|$ in terms of the number of iterations and elapsed time for Algorithms SD-1, SD-2, and SD-3
  for Problem \ref{QP} when $d := 10^4$}\label{sd_qp1}
\end{figure}

Figure \ref{cg_qp} is the evaluation of $(\| x_n - T(x_n) \|)_{n\in\mathbb{N}}$ in terms of the number of iterations 
and elapsed time for Algorithms FR, PRP+, HS+, DY, and HZ when $d:= 10^3$. 
Figure \ref{cg_qp}(a) shows that they can find fixed points of $T$ within a finite number of iterations.
Figure \ref{cg_qp}(b) indicates that PRP+ and HS+ find the fixed points of $T$ faster than FR, DY, and HZ.
This is because Algorithm \ref{line_search} for each of PRP+ and HS+ has a 100 \% success rate at computing the step sizes
satisfying \eqref{Armijo} and \eqref{Wolfe}, while the SRs of Algorithm \ref{line_search} for FR, DY, and HZ 
are low (see Table \ref{CG_0}); i.e., FR, DY, and HZ require much more time to compute the step sizes than PRP+ and HS+. 
In fact, we checked that the times to compute the step sizes for FR, DY, and HZ account for 92.672202\%, 87.156303\%, and 83.700936\% of all the computational times, while the times to compute the step sizes for PRP+ and HS+ account for 60.725204\% and 60.889635\% of all the computational times.
Figure \ref{cg_qp1} indicate the behaviors of FR, PRP+, HS+, DY, and HZ when $d:= 10^4$ and 
PRP+ and HS+ perform better than FR, DY, and HZ, as seen in Figure \ref{cg_qp}.
Such a trend can be also verified from Table \ref{CG_0} showing that the SRs of Algorithm \ref{line_search} 
for PRP+ and HS+ are about 100\%.

\begin{figure}[H]
  \centering
  \subcaptionbox{$\|x_n - T(x_n) \|$ vs. no. of iterations}{\includegraphics{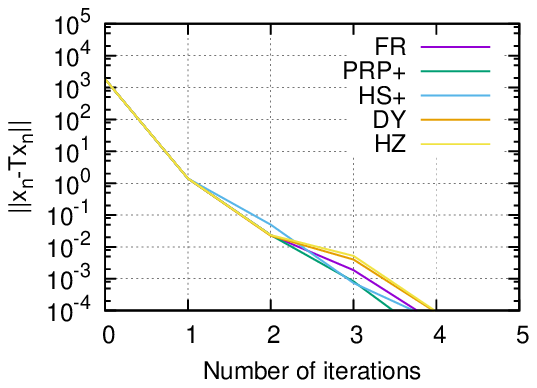}}
  \subcaptionbox{$\|x_n - T(x_n) \|$ vs. elapsed time}{\includegraphics{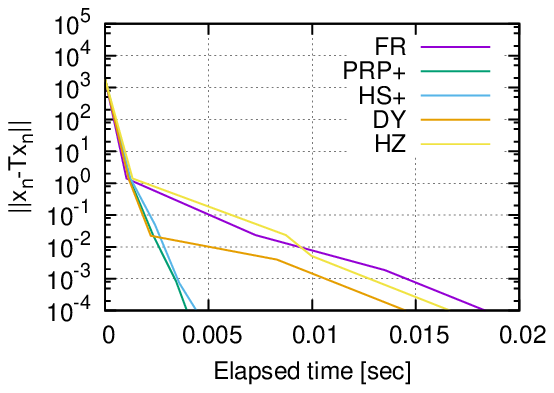}}
  \caption{Evaluation of $\|x_n - T(x_n)\|$ in terms of the number of iterations and elapsed time for Algorithms
  FR, PRP+, HS+, DY, and HZ
  for Problem \ref{QP} when $d := 10^3$}\label{cg_qp}
\end{figure}

\begin{figure}[H]
  \centering
  \subcaptionbox{$\|x_n - T(x_n) \|$ vs. no. of iterations}{\includegraphics{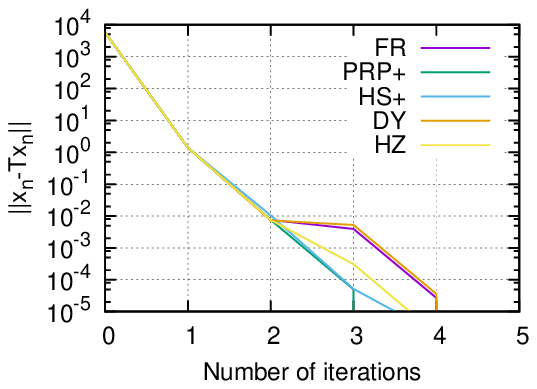}}
  \subcaptionbox{$\|x_n - T(x_n) \|$ vs. elapsed time}{\includegraphics{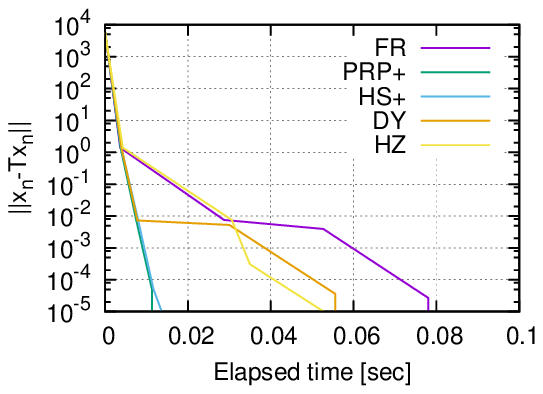}}
  \caption{Evaluation of $\|x_n - T(x_n)\|$ in terms of the number of iterations and elapsed time for Algorithms
  FR, PRP+, HS+, DY, and HZ
  for Problem \ref{QP} when $d := 10^4$}\label{cg_qp1}
\end{figure}

\subsection{Generalized convex feasibility problem}\label{subsec:3.1}
This subsection considers the following generalized convex feasibility problem \cite[Section I, Framework 2]{com1999},
\cite[Subsection 2.2]{iiduka_siam2012}, \cite[Definition 4.1]{yamada}:

\begin{prob}\label{GCFP}
Suppose that $C_i$ $(i=0,1,\ldots, m)$ is a nonempty, closed convex subset of $\mathbb{R}^d$ 
onto which $P_{C_i}$ can be efficiently computed
and define the weighted mean square value of the distances from $x\in \mathbb{R}^d$ to $C_i$ $(i=1,2,\ldots,m)$ as $f(x)$ below; i.e., for $w_i\in (0,1)$ $(i=1,2,\ldots,m)$ satisfying $\sum_{i=1}^m w_i = 1$,
\begin{align*}
f\left(x\right) 
:=  \sum_{i=1}^m w_i \left( \min_{y\in C_i} \left\| x - y  \right\| \right)^2.
\end{align*}
Our objective is to find a point in the generalized convex feasible set defined by 
\begin{align*}
C_{f} := \left\{ x^\star \in C_0 \colon f \left( x^\star \right) = \min_{x\in C_0} f \left(x\right)  \right\}.
\end{align*}
\end{prob}

$C_{f}$ is a subset of $C_0$ having the elements closest to $C_i$ $(i=1,2,\ldots,m)$ in terms of the weighted mean square norm. 
Even if $\bigcap_{i=0}^m C_i = \emptyset$, $C_{f}$ is well-defined because $C_{f}$ is the set of all minimizers of $f$ over $C_0$. 
The condition $C_{f} \neq \emptyset$ holds when $C_0$ is bounded \cite[Remark 4.3(a)]{yamada}. 
Moreover, $C_{f} = \bigcap_{i=0}^m C_i$ holds when $\bigcap_{i=0}^m C_i \neq \emptyset$. 
Accordingly, Problem \ref{GCFP} is a generalization of the convex feasibility problem \cite{bau} 
of finding a point in $\bigcap_{i=0}^m C_i \neq \emptyset$.

The convex function $f$ in Problem \ref{GCFP} satisfies $\nabla f = \mathrm{Id} - \sum_{i=1}^m w_i P_{C_i}$.
Hence, $\nabla f$ is Lipschitz continuous when its Lipschitz constant is two.
This means Problem \ref{GCFP} is an example of Problem \eqref{csco}.
Since Problem \ref{GCFP} can be expressed as the problem of finding a fixed point of  
$T = P_{C_0} (\mathrm{Id} - \lambda \nabla f) = P_{C_0} (\mathrm{Id} - \lambda (\mathrm{Id} - \sum_{i=1}^m w_i P_{C_i}) )$
for $\lambda \in (0,1]$,
we used $T$ with $\lambda =1$; i.e., $T := P_{C_0} (\sum_{i=1}^m w_i P_{C_i})$.

We applied SD-1, SD-2, SD-3, FR, PRP+, HS+, DY, and HZ to Problem \ref{GCFP} in the following cases:
\begin{align*}
&d:= 10^3 \text{ or } 10^4, \text{ } 
m:= 99, \text{ } w_i := \frac{1}{99} \text{ } (i=1,2,\ldots,99),\\
&c_i \in (-32, 32)^d, \text{ } 
C_i := \left\{ x \in \mathbb{R}^d \colon \left\| x - c_i \right\| \leq 1  \right\} \text{ } (i=0,1,\ldots,m).
\end{align*}
The experiment used one hundred random numbers in the range of $(-32,32)^d$ for $c_i$, which means $\bigcap_{i=0}^m C_i = \emptyset$. Since $C_i$ $(i=0,1,\ldots,m)$ is a closed ball with center $c_i$ and radius $1$, $P_{i}$ can be computed within a finite number of arithmetic operations. 

\begin{table}[htbp]
  \caption{Satisfiability rate of Algorithm \ref{line_search} for Algorithms SD-1, SD-2, and SD-3 applied to Problem \ref{GCFP} 
  when $d:= 10^3, 10^4$
  }\label{SD}
  \centering
  \begin{tabular}{c|cc}
    \hline
      Algorithm & SR $(d:=10^3)$ & SR ($d:=10^4$)  \\
    \hline
      SD-1      & 80.6\%          & 64.2\%  \\
      SD-2      & 100\%           & 100\%   \\
      SD-3      & 100\%           & 100\%   \\
    \hline
  \end{tabular}
\end{table}

\begin{table}[htbp]
  \caption{Satisfiability rate of Algorithm \ref{line_search} for Algorithms FR, PRP+, HS+, DY, and HZ applied to Problem \ref{GCFP}
  when $d:= 10^3, 10^4$}\label{CG}
  \centering
  \begin{tabular}{c|cc}
    \hline
      Algorithm & SR ($d:=10^3$) & SR ($d := 10^4$) \\
    \hline
      FR & 50.0\% & 50.0\% \\
      PRP+ & 100\% & 100\% \\
      HS+ & 55.8\% & 60.4\% \\
      DY & 50.0\% & 50.0\% \\
      HZ & 50.0\% & 50.0\% \\
    \hline
  \end{tabular}
\end{table}

Table \ref{SD} shows the satisfiability rates as defined by \eqref{SR} for Algorithms SD-1, SD-2, and SD-3 applied to Problem \ref{GCFP}. It can be seen that the step sizes for SD-1 do not always satisfy the Wolfe-type conditions \eqref{Armijo} and \eqref{Wolfe}, whereas the step sizes computed by Algorithm \ref{line_search} and SD-2 (resp. Algorithm SD-3) definitely satisfy the Armijo-type condition \eqref{armijo0} (resp. the Wolfe-type conditions \eqref{Armijo} and \eqref{Wolfe}). Such a trend also existed when SD-1, SD-2, and SD-3 were applied to Problem \ref{QP} (see Table \ref{SD_0}).

Table \ref{CG} shows the satisfiability rates for Algorithms FR, PRP+, HS+, DY, and HZ. The table indicates that Algorithm \ref{line_search} for PRP+ has a 100\% success rate at computing the step sizes satisfying \eqref{Armijo} and \eqref{Wolfe}, while the SRs of Algorithm \ref{line_search} for the other algorithms lie between $50\%$ and about $60\%$. From Tables \ref{SD} and \ref{CG}, we can see that SD-3 and PRP+ are robust in the sense that Algorithm \ref{line_search} can compute the step sizes satisfying the Wolfe-type conditions \eqref{Armijo} and \eqref{Wolfe}.

\begin{figure}[H]
  \centering
  \subcaptionbox{$\|x_n - T(x_n) \|$ vs. no. of iterations}{\includegraphics{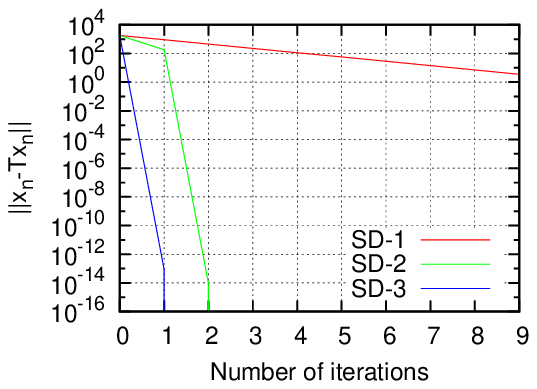}}
  \subcaptionbox{$\|x_n - T(x_n) \|$ vs. elapsed time}{\includegraphics{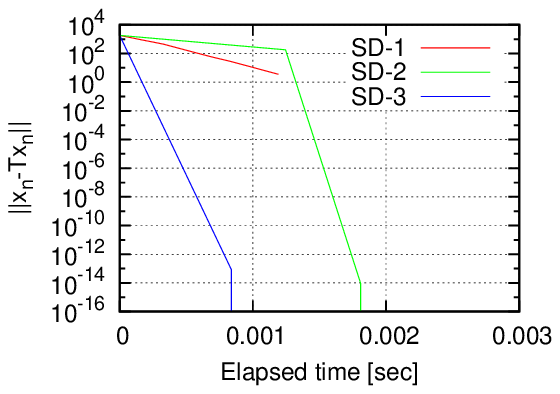}}
  \caption{Evaluation of $\|x_n - T(x_n)\|$ in terms of the number of iterations and elapsed time for Algorithms SD-1, SD-2, and SD-3
  for Problem \ref{GCFP} when $d := 10^3$}\label{fig:1}
\end{figure}

\begin{figure}[H]
  \centering
  \subcaptionbox{$\|x_n - T(x_n) \|$ vs. no. of iterations}{\includegraphics{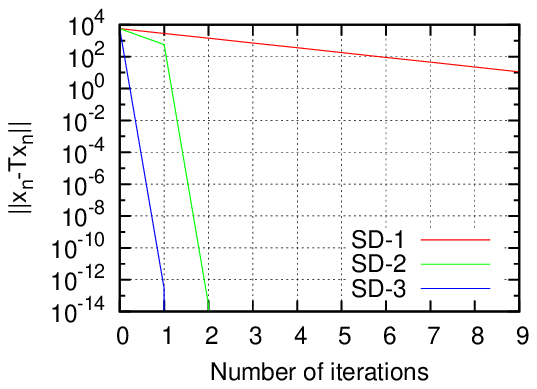}}
  \subcaptionbox{$\|x_n - T(x_n) \|$ vs. elapsed time}{\includegraphics{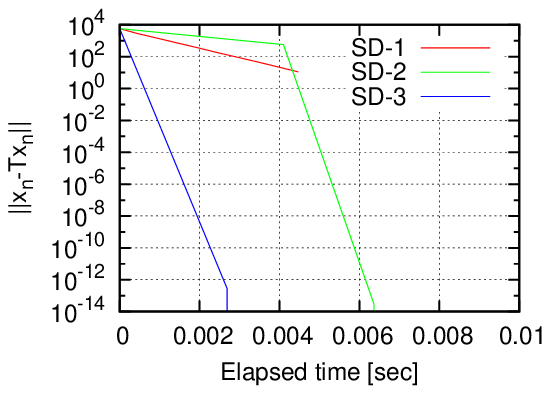}}
  \caption{Evaluation of $\|x_n - T(x_n)\|$ in terms of the number of iterations and elapsed time for Algorithms SD-1, SD-2, and SD-3
  for Problem \ref{GCFP} when $d := 10^4$}\label{fig:3}
\end{figure}

Figure \ref{fig:1} indicates the behaviors of SD-1, SD-2, and SD-3 when $d:= 10^3$. The y-axes represent the value of $\| x_n - T(x_n) \|$. The x-axis in Figure \ref{fig:1}(a) represents the number of iterations, and the x-axis in Figure \ref{fig:1}(b) represents the elapsed time. From Figure \ref{fig:1}(a), the iterations needed to satisfy $\|x_n - T(x_n) \| = 0$ for SD-2 and SD-3 are, respectively, $3$ and $2$. It can be seen that SD-3 reduces the running time and iterations needed to find a fixed point compared with SD-2. These figures also show that the $(\|x_n - T(x_n)\|)_{n\in\mathbb{N}}$ generated by SD-1 converges slowest. Therefore, we can see that the use of the step sizes satisfying the Wolfe-type conditions is a good way to solve fixed point problems by using the Krasnosel'ski\u\i-Mann algorithm, as seen in Figures \ref{sd_qp} and \ref{sd_qp1} illustrating the behaviors of SD-1, SD-2, and SD-3 on Problem \ref{QP} when $d := 10^3, 10^4$. Figure \ref{fig:3} indicates the behaviors of SD-1, SD-2, and SD-3 when $d:= 10^4$. Similarly to what is shown in Figure \ref{fig:1}, SD-3 finds a fixed point of $T$ faster than SD-1 and SD-2 can.

\begin{figure}[H]
  \centering
  \subcaptionbox{$\|x_n - T(x_n) \|$ vs. no. of iterations}{\includegraphics{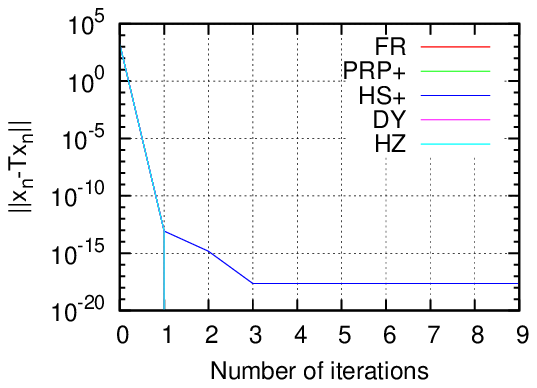}}
  \subcaptionbox{$\|x_n - T(x_n) \|$ vs. elapsed time}{\includegraphics{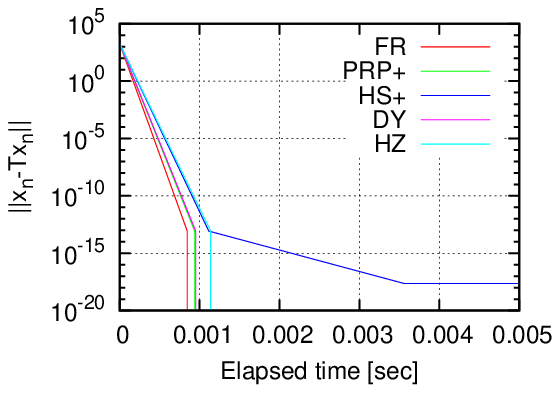}}
  \caption{Evaluation of $\|x_n - T(x_n)\|$ in terms of the number of iterations and elapsed time for Algorithms FR, PRP+, 
  HS+, DY, and HZ
  for Problem \ref{GCFP} when $d := 10^3$}\label{fig:5}
\end{figure}

\begin{figure}[H]
  \centering
  \subcaptionbox{$\|x_n - T(x_n) \|$ vs. no. of iterations}{\includegraphics{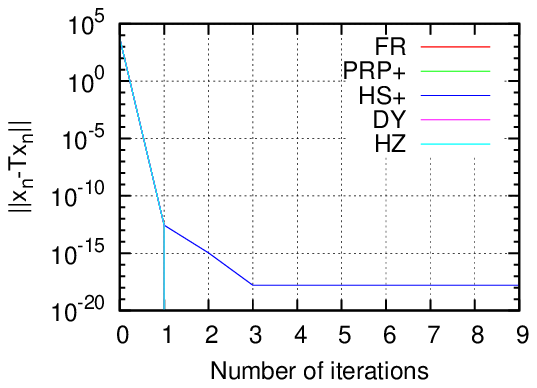}}
  \subcaptionbox{$\|x_n - T(x_n) \|$ vs. elapsed time}{\includegraphics{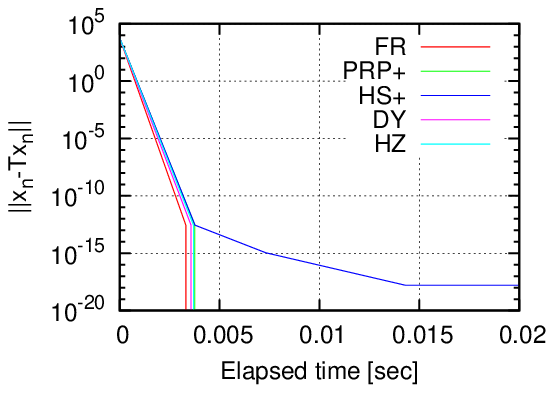}}
  \caption{Evaluation of $\|x_n - T(x_n)\|$ in terms of the number of iterations and elapsed time for Algorithms FR, PRP+, 
  HS+, DY, and HZ
  for Problem \ref{GCFP} when $d := 10^4$}\label{fig:7}
\end{figure}

Figure \ref{fig:5}(a) is the evaluation of $(\| x_n - T(x_n) \|)_{n\in\mathbb{N}}$ in terms of the number of iterations for Algorithms FR, PRP+, HS+, DY, and HZ when $d:= 10^3$. Except for HS+, the algorithms approximate the fixed points of $T$ very rapidly.
It can be also seen that the algorithms other than HS+ satisfy $\| x_2 - T(x_2) \| = 0$. 
Figure \ref{fig:5}(b) is the evaluation of $(\| x_n - T(x_n) \|)_{n\in\mathbb{N}}$ in terms of the elapsed time. 
Here, we can see that FR, PRP+, and DY can find fixed points of $T$ faster than SD-1 and SD-2 (Figure \ref{fig:1}). 
Figure \ref{fig:7} indicates the behaviors of FR, PRP+, HS+, DY, and HZ when $d:= 10^4$. The results in these figures are almost the same as the ones in Figures \ref{fig:5}.

From the above numerical results, we can conclude that the proposed algorithms can find optimal solutions to Problems \ref{QP} and \ref{GCFP} faster than the previous fixed point algorithms can. In particular, it can be seen that the algorithms for which the SRs of Algorithm \ref{line_search} are high converge quickly to solutions of Problems \ref{QP} and \ref{GCFP}.

\section{Conclusion and Future Work}\label{sec:4}
This paper discussed the fixed point problem for a nonexpansive mapping on a real Hilbert space and presented line search fixed point algorithms for solving it on the basis of nonlinear conjugate gradient methods for unconstrained optimization and 
their convergence analyses and convergence rate analyses. 
Moreover, we used these algorithms to solve concrete constrained quadratic programming problems and generalized convex feasibility problems and numerically compared them with the previous fixed point algorithms based on the Krasnosel'ski\u\i-Mann fixed point algorithm. 
The numerical results showed that the proposed algorithms can find optimal solutions to these problems faster than the previous algorithms.

In the experiment, the line search algorithm (Algorithm \ref{line_search}) could not compute appropriate step sizes for fixed point algorithms other than Algorithms SD-2, SD-3, and PRP+. In the future, we should consider modifying the algorithms to enable the line search to compute appropriate step sizes. Or we may need to develop new line searches that can be applied to all of the fixed point algorithms considered in this paper.

The main objective of this paper was to devise line-search fixed-point algorithms to accelerate the previous Krasnosel'ski\u\i-Mann fixed point algorithm defined by \eqref{kra}, i.e., $x_{n+1} := \lambda_n x_n + (1-\lambda_n) T(x_n)$ $(n\in \mathbb{N})$, where $(\lambda_n)_{n\in\mathbb{N}} \subset [0,1]$ with $\sum_{n=0}^\infty \lambda_n (1-\lambda_n) = \infty$ and $x_0 \in H$ is an initial point. Another particularly interesting problem is determining whether or not there are line search fixed point algorithms to accelerate the following Halpern fixed point algorithm \cite{halpern,wit}: for all $n\in \mathbb{N}$,
\begin{align*}
x_{n+1} &:= \alpha_n x_0 + \left(1-\alpha_n \right) T \left(x_n \right),
\end{align*}
where $(\alpha_n)_{n\in\mathbb{N}} \subset (0,1)$ satisfies $\lim_{n\to\infty} \alpha_n = 0$ and $\sum_{n=0}^\infty \alpha_n = \infty$. The Halpern algorithm can minimize the convex function $\| \cdot - x_0 \|^2$ over $\mathrm{Fix}(T)$ (see, e.g., \cite[Theorem 6.17]{berinde}). A previously reported result \cite[Theorem 3.1, Proposition 3.2]{iiduka_mp2014} showed that there is an inconvenient possibility that the Halpern-type algorithm with a diminishing step size sequence (e.g., $\alpha_n := 1/(n+1)^a$, where $a \in (0,1]$) and any of the FR, PRP, HS, and DY formulas used in the conventional conjugate gradient methods may not converge to the minimizer of $\| \cdot - x_0 \|^2$ over $\mathrm{Fix}(T)$. However, there is room for further research into devising line search fixed point algorithms to accelerate the Halpern algorithm with a diminishing step size sequence.

\section*{Acknowledgments} 
The author thanks Mr. Kazuhiro Hishinuma for his discussion on the numerical experiments.

\bibliographystyle{spmpsci} 
\bibliography{biblio1}

\end{document}